\newtheorem{proposition}{Proposition}
\newtheorem{theorem}[proposition]{Theorem}
\newtheorem{lemma}[proposition]{Lemma}
\newtheorem{corollary}[proposition]{Corollary}
\newtheorem{definition}[proposition]{Definition}
\newtheorem{remark}[proposition]{Remark}
\newcommand\restr[2]{{
  \left.\kern-\nulldelimiterspace 
  #1 
  \vphantom{\big|} 
  \right|_{#2} 
  }}
\newenvironment{proof}{{\noindent \it Proof.}}{\hfill $\fbox{}$ \vspace*{5mm}}
\newcommand{\X}{{\cal X}}
\newcommand{\Y}{{\cal Y}}
\newcommand{\N}{\mathbb{N}}
\newcommand{\R}{\mathbb{R}}
\newcommand{\BE}{\begin{equation}}
\newcommand{\EE}{\end{equation}}
\numberwithin{equation}{section}
\date{}
\begin{document}
\title{Iterated fractional Tikhonov regularization}
\author{Davide Bianchi$^*$ \and 
Alessandro Buccini$^*$ \and 
Marco Donatelli$^*$ \and 
Stefano Serra-Capizzano\footnote{Dipartimento di Scienza e Alta Tecnologia, 
                                 Universit\`a dell'Insubria,
                                 22100 Como, Italy}}

\maketitle

\begin{abstract}
Fractional Tikhonov regularization methods have been recently proposed to reduce the oversmoothing 
property of the Tikhonov regularization in standard form, in order to preserve the details of the approximated solution. 
Their regularization and convergence properties have been previously investigated showing that they are of optimal order.
This paper provides saturation and converse results on their convergence rates. 
Using the same iterative refinement strategy of iterated Tikhonov regularization, new iterated fractional Tikhonov regularization methods are introduced. We show that these iterated methods are of optimal order and overcome the previous saturation results. Furthermore, nonstationary iterated fractional Tikhonov regularization methods are investigated, establishing their convergence rate under general conditions on the iteration parameters. Numerical results confirm the effectiveness of the proposed regularization iterations.      
\end{abstract}

\section{Introduction}
We consider linear operator equations of the form
\begin{equation}\label{eq:Kxy}
Kx \,=\, y\,,
\end{equation}
where $K:\X\to\Y$ is a compact linear operator between Hilbert
spaces $\X$ and $\Y$. We assume $y$ to be attainable, i.e., that
problem~\eqref{eq:Kxy} has a solution  $x^\dag=K^\dag y$ of
minimal norm. Here $K^\dag$ denotes the (Moore-Penrose)
generalized inverse operator of $K$, which is unbounded when $K$
is compact, with infinite dimensional range. Hence problem~\eqref{eq:Kxy} is ill-posed and has to be
regularized in order to compute a numerical solution; see 
\cite{engl1996regularization}.

We want to approximate the solution $x^\dag$ of the equation~\eqref{eq:Kxy}, 
when only an approximation $y^\delta$ of $y$ is available with
\begin{equation}\label{eq:delta}
\|y^\delta - y \|\leq \delta,
\end{equation}
where $\delta$ is called the noise level. Since $K^\dag y^\delta$ is not a good
approximation of $x^\dag$, we approximate $x^\dag$ with
$x_\alpha ^\delta := R_\alpha y^\delta$
where $\{R_\alpha\}$ is a family of continuous operators depending on a parameter
$\alpha$ that will be defined later.
A classical example is the Tikhonov regularization defined by
$R_\alpha=(K^*K+\alpha I)^{-1}K^*$,
where $I$ denotes the identity and $K^*$ the adjoint of $K$, cf. \cite{groetsch1984}.

Using the singular values expansion of $K$, filter based
regularization methods are defined in terms of filters of the
singular values, cf. Proposition~\ref{prop:filter}. This is a
useful tool for the analysis of regularization techniques
\cite{hansen1998}, both for direct and iterative regularization
methods
\cite{hankehansen1993,hno2006}.
Furthermore, new regularization methods can be defined
investigating new classes of filters. For instance, one of the
contributes in \cite{klann2008regularization} is the proposal and
the analysis of the fractional Tikhonov method. The authors obtain
a new class of filtering regularization methods adding an
exponent, depending on a parameter, to the filter of the standard
Tikhonov method. They provide a detailed analysis  of the
filtering properties and the optimality order of the method in
terms of such further parameter. A different generalization of the
Tikhonov method has been recently proposed in
\cite{hochstenbach2011fractional} with a detailed filtering analysis.
Both generalizations are called ``fractional Tikhonov regularization'' in the literature and 
they are compared in \cite{Gert2015fractional}, where the optimality order of the method in
\cite{hochstenbach2011fractional} is provided as well. To distinguish the two proposals in 
\cite{klann2008regularization} and \cite{hochstenbach2011fractional}, we will refer in the following as 
``fractional Tikhonov regularization'' and ``weighted Tikhonov regularization'', respectively.
These variants of the Tikhonov method have
been introduced to compute good approximations of non-smooth
solutions, since it is well known that the Tikhonov method
provides over-smoothed solutions. 

In this paper, we firstly provide a saturation result similar to the well-known saturation result
for Tikhonov regularization \cite{engl1996regularization}: let $R(K)$ be the range of $K$ and let $Q$ be the orthogonal projector onto $\overline{R(K)}$, if
\[
\sup \left\{ \| x_{\alpha}^\delta - x^\dagger \| :\, \|Q( y - y^\delta) \| \leq \delta  \right\} = o(\delta^{\frac{2}{3}}),
\]
then $x^\dagger=0$, as long as $\overline{R(K)}$ is not closed.
Such result motivated us to introduce the iterated version of fractional and weighted Tikhonov in the same spirit of the 
iterated Tikhonov method. We prove that those iterated methods can overcome the previous saturation results.
Afterwards, inspired by the works \cite{brill1987iterative,hanke1998nonstationary} we introduce the nonstationary variants of our
iterated methods. Differently from the nonstationary iterated Tikhonov, we have two nonstationary sequences of parameters.
In the noise free case, we give sufficient conditions on these sequences to guarantee the convergence providing also the corresponding convergence rates. In the noise case, we show the stability of the proposed iterative schemes proving that they are regularization methods. Finally, few selected examples confirm the previous theoretical analysis, showing that a proper choice of the nonstationary sequences of parameters can provide better restorations compared to the classical iterated Tikhonov with a geometric sequence of regularizzation parameter according to \cite{hanke1998nonstationary}. 

The paper is organized as follows. Section~\ref{sec:basic} 
recalls the basic definition of filter based regularization methods
and of optimal order of a regularization method. 
Fractional Tikhonov regularization with optimal order and converse results are studied in Section~\ref{sec:fract}. 
Section~\ref{sec:saturation} is devoted to saturation results for both variants of fractional Tikhonov regularization.
New iterated fractional Tikhonov regularization methods are introduced in Section~\ref{sec:statiter}, 
where the analysis of their convergence rate shows that their are able to overcome the previous saturation results.
A nonstationary iterated weighted Tikhonov regularization is investigated in detail in Section~\ref{sec:NSIWT},
while a similar nonstationary iterated fractional Tikhonov regularization is discussed in Section~\ref{sec:NSIFT}.
Finally, some numerical examples are reported in Section~\ref{sec:numerical}.

\section{Preliminaries}\label{sec:basic}

As described in the Introduction, we consider a compact linear
operator $K : \X \to \Y$ between Hilbert spaces $\X$ and $\Y$ (over the field $\R$ or $\mathbb{C}$) with given inner products $\langle \cdot , \cdot \rangle_{\X}$ and $\langle \cdot , \cdot \rangle_{\Y}$, respectively. Hereafter we will omit the subscript for the inner product as it will be clear in the context. If
$K^* : \Y \to \X$ denotes the adjoint of $K$ (i.e., $\langle Kx, y
\rangle = \langle x, K^* y \rangle$), then we indicate with $\left
( \sigma_n; v_n, u_n \right )_{n \in \mathbb{N}}$ the singular
value expansion (s.v.e.) of $K$, where $\{ v_n \}_{n\in
\mathbb{N}}$ and $\{ u_n \}_{n\in \mathbb{N}}$ are a complete
orthonormal system of eigenvectors for $K^* K$ and $KK^*$,
respectively, and $\sigma_n > 0$ are written in decreasing order,
with $0$ being the only accumulating point for the sequence $\{ \sigma_n
\}_{n\in \mathbb{N}}$. If $\X$ is not finite dimensional, then $0 \in \sigma(K^*K)$, the spectrum of $K^*K$, namely $\sigma(K^*K) = \{0\} \cup \bigcup_{n=1}^\infty \{\sigma_n ^2 \}$. Finally, $\sigma(K)$ denotes the closure of
$\bigcup_{n=1}^\infty\{ \sigma_n \}$, i.e., $ \sigma(K)= \{ 0 \}
\cup \bigcup_{n=1}^\infty \{ \sigma_n \}$.

Let now $\{ E_{\sigma^2} \}_{\sigma^2 \in \sigma(K^* K)}$ be the spectral
decomposition of  the self-adjoint operator $K^* K$. Then from
well-known facts from functional analysis \cite{rudin1991functional}
we can write $f(K^* K) := \int f(\sigma ^2) dE_{\sigma^2}$, where $f : \sigma (K^*K) \subset \R \to \mathbb{C}$ is a bounded Borel measurable function and
$\langle E x_1 , x_2 \rangle$ is a regular complex Borel measure for every $x_1, x_2
\in \X$. The following equalities hold
\begin{align}
&Kx = \sum_{m=1}^{+\infty} \sigma_m \langle x, v_m \rangle u_m, \qquad x \in \X, \label{eq:1.1} \\
&K^* y = \sum_{m=1}^{+\infty} \sigma_m \langle y, u_m \rangle v_m, \qquad y \in \Y, \label{eq:1.2} \\
&f(K^* K) x := \int_{\sigma(K^*K)} f(\sigma^2) dE_{\sigma^2} x  = \sum_{m=1}^\infty f(\sigma_m ^2)\langle x,v_m \rangle v_m,\\
&\langle f(K^* K) x_1 , x_2\rangle = \int_{\sigma(K^*K)} f(\sigma^2) d\langle E_{\sigma^2} x_1 , x_2 \rangle = \sum_{m=1}^\infty f(\sigma_m ^2) \overline{\langle y, v_m \rangle} \langle x, v_m \rangle ,\\
&\| f(K^*K) \| \leq \sup \{ |f(\sigma ^2)| : \sigma^2 \in \sigma(K^*K)  \}, \label{eq:1.2.1}
\end{align}
where the series \eqref{eq:1.1} and \eqref{eq:1.2} converge in the
$L^2$ norms induced by the scalar products of $\X$ and $\Y$,
respectively. If $f$ is a continuous function on $\sigma(K^*K)$ then equality holds in \eqref{eq:1.2.1}.

\begin{definition}
We define the \emph{generalized inverse} $K^\dagger : \mathcal{D}(K^\dagger) \subseteq \Y \to \X$ of a compact linear operator $K : \X \to \Y$ as
\begin{equation}\label{eq:xdag}
K^\dagger y=  \sum_{m:\, \sigma_m >0} \sigma_m ^{-1} \langle y, u_m \rangle v_m, \qquad y \in \mathcal{D}(K^\dagger),
\end{equation}
where
$$\mathcal{D}(K^\dagger) = \left \{ y \in \Y : \sum_{m:\, \sigma_m >0} \sigma_m ^{-2} | \langle y, u_m \rangle | ^2 < \infty \right \}.
$$
\end{definition}

With respect to problem~\eqref{eq:Kxy}, we consider the case where
only an approximation $y^\delta$ of $y$ satisfying the condition
\eqref{eq:delta} is available. Therefore $x^\dagger = K^\dagger
y$, $ y \in \mathcal{D}(K^\dagger)$, cannot be approximated by
$K^\dag y^\delta$, due to the unboundedness of $K^\dag$, and hence
in practice the problem~\eqref{eq:Kxy} is approximated by a family
of neighbouring well-posed problems \cite{engl1996regularization}.

\begin{definition}
\label{definition:1.1}
By a \emph{regularization method} for $K^\dagger$ 
we call any family of operators
$$
\{ R_\alpha \}_{\alpha \in (0, \alpha_0)} : \Y \to \X, \qquad \alpha_0 \in (0, +\infty],
$$
with the following properties:
\begin{itemize}
\item[(i)] $R_\alpha : \Y \to \X$ is a bounded operator for every
$\alpha$. \item[(ii)] For every $y \in \mathcal{D}(K^\dagger)$
there exists  a mapping (\textit{rule choice}) $\alpha :
\mathbb{R}_+ \times \Y \to (0, \alpha_0)\in\R$, $\alpha = \alpha
(\delta, y^\delta)$, such that
$$
\limsup_{\delta \to 0} \left \{ \alpha (\delta, y^\delta) : y^\delta \in \Y, \| y - y^\delta \| \leq \delta \right\} =0,
$$
and
$$
\limsup_{\delta \to 0} \left \{ \| R_{\alpha(\delta, y^\delta)} y^\delta - K^\dagger y \| : y^\delta \in \Y, \| y - y^\delta \| \leq \delta \right \}=0.
$$
\end{itemize}
\end{definition}

Throughout this paper $c$ is a constant which can change from one
instance to the next. For the sake of clarity, if more than one
constant will appear in the same line or equation we will
distinguish them by means of a subscript.

\begin{proposition}
\label{prop:filter}
Let $K : \X \to \Y$ be a compact linear operator and $K^\dagger$ its generalized inverse. Let $R_\alpha : \Y \to \X$ be a family of operators defined for every $\alpha \in (0, \alpha_0)$ as
\begin{equation}\label{eq:def_filter}
R_\alpha y := \sum_{m:\, \sigma_m >0} F_\alpha (\sigma_m) \sigma_m ^{-1} \langle y, u_m \rangle v_m,
\end{equation}
where $F_\alpha : [0, \sigma_1] \supset \sigma(K) \to \mathbb{R}$ is a Borel function such that
\begin{subequations}\label{eq:filtercond}
\begin{eqnarray}
&& \sup_{m:\, \sigma_m >0} | F_\alpha (\sigma_m) \sigma_m ^{-1} | = c(\alpha) < \infty, 
\label{eq:1.3}\\
&&  | F_\alpha (\sigma_m)| \le c <
\infty,  \qquad \mbox{where }c\mbox{ does not depend on }
(\alpha,m), 
\label{eq:1.4} \\
&&  \lim_{\alpha \to 0} F_\alpha (\sigma_m)  = 1 \, \mbox{
point-wise in } \sigma_m. \label{eq:1.5}
\end{eqnarray}
\end{subequations}
Then $R_\alpha$ is a regularization method, with $\| R_\alpha \| = c(\alpha)$, and it is called \emph{filter based regularization method}.
\end{proposition}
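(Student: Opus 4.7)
The plan is to verify separately the two properties of Definition~\ref{definition:1.1}, using the spectral expansion \eqref{eq:def_filter} throughout.

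First I would show $\|R_\alpha\|=c(\alpha)$. Parseval's identity applied to the orthonormal system $\{v_m\}$ gives
\[
\|R_\alpha y\|^2 \;=\; \sum_{m:\,\sigma_m>0}\bigl|F_\alpha(\sigma_m)\sigma_m^{-1}\bigr|^2\,|\langle y,u_m\rangle|^2,
\]
which is bounded by $c(\alpha)^2\|y\|^2$ thanks to \eqref{eq:1.3}; the reverse bound follows by testing on a normalized $u_{m^*}$ where the supremum in \eqref{eq:1.3} is (nearly) attained. This settles (i).

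For property (ii), the approach is the classical bias--variance split
\[
\|R_\alpha y^\delta - K^\dagger y\| \;\le\; \|R_\alpha\|\,\|y^\delta - y\| + \|R_\alpha y - K^\dagger y\| \;\le\; c(\alpha)\,\delta + \|R_\alpha y - K^\dagger y\|,
\]
so that only the bias term has to be analyzed. Writing
\[
R_\alpha y - K^\dagger y \;=\; \sum_{m:\,\sigma_m>0}\bigl(F_\alpha(\sigma_m)-1\bigr)\,\sigma_m^{-1}\langle y,u_m\rangle\,v_m
\]
and invoking $y\in\mathcal{D}(K^\dagger)$, each squared summand admits the summable majorant $(1+c)^2\sigma_m^{-2}|\langle y,u_m\rangle|^2$ thanks to \eqref{eq:1.4}; combined with the pointwise limit \eqref{eq:1.5}, dominated convergence for $\ell^1$-series yields $\|R_\alpha y - K^\dagger y\|\to 0$ as $\alpha\to 0$. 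I would then exhibit a rule $\alpha(\delta,y^\delta)$ with $\alpha\to 0$ and $c(\alpha)\delta\to 0$: for instance, $\alpha(\delta)=\inf\{\alpha\in(0,\alpha_0):c(\alpha)\le\delta^{-1/2}\}$ works whenever $c(\alpha)\to\infty$ as $\alpha\to 0$ (the complementary case, in which $K^\dagger$ would be bounded, is trivial), and the two-term estimate above concludes (ii).

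The only mildly delicate point is the dominated-convergence step: the summability of the majorant $\sigma_m^{-2}|\langle y,u_m\rangle|^2$ is exactly the defining property $y\in\mathcal{D}(K^\dagger)$ from \eqref{eq:xdag}, which is precisely why this hypothesis (rather than merely $y\in\Y$) is imposed in Definition~\ref{definition:1.1}(ii). Everything else reduces to spectral calculus and a textbook parameter-choice construction, so I do not anticipate further obstacles.
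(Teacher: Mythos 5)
The paper does not actually write out a proof of Proposition~\ref{prop:filter}: it simply cites \cite{louis65075inverse} and \cite{engl1996regularization}. Your argument is exactly the standard one from those references, and it is essentially correct: Parseval/Bessel together with testing $R_\alpha$ on the singular vectors $u_{m}$ gives $\|R_\alpha\|=c(\alpha)$, hence property (i); the bias--variance split plus dominated convergence for the series (with the summable majorant supplied precisely by $y\in\mathcal{D}(K^\dagger)$ via \eqref{eq:xdag}, and the uniform bound $(1+c)^2$ from \eqref{eq:1.4}) gives $\|R_\alpha y-K^\dagger y\|\to0$, and a parameter choice with $\alpha(\delta)\to0$ and $c(\alpha(\delta))\delta\to0$ finishes (ii). The only point to tighten is your explicit rule $\alpha(\delta)=\inf\{\alpha\in(0,\alpha_0):c(\alpha)\le\delta^{-1/2}\}$: nothing in \eqref{eq:filtercond} makes $c(\alpha)$ monotone or continuous in $\alpha$, so the infimum itself need not lie in the set, and $c(\alpha(\delta))$ could exceed $\delta^{-1/2}$ at that point. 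The fix is immediate: select any element of the set within, say, $\delta$ of its infimum (such an element exists because $c(\alpha^*)<\infty$ for each fixed $\alpha^*$, which also gives $\alpha(\delta)\to0$), or more simply observe that one only needs \emph{some} selection with $\alpha(\delta)\to0$ and $c(\alpha(\delta))\delta\to0$. Your side remark about the complementary case is also sound: if $c(\alpha_n)$ stays bounded along some $\alpha_n\to0$, then \eqref{eq:1.5} forces $\sigma_m^{-1}$ to be uniformly bounded, so $K^\dagger$ is bounded and the claim is trivial. With that small repair, your proof is complete and matches the classical route the paper delegates to the literature.
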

\begin{proof}
See \cite{louis65075inverse} and \cite{engl1996regularization}.
\end{proof}

For the sake of notational brevity, we fix the following notation
\begin{align}
x_\alpha & := R_\alpha y, \qquad y \in \mathcal{D}(K^\dagger), \label{eq:xalpha}\\
x_\alpha ^\delta & := R_\alpha y^\delta, \qquad y^\delta \in \Y. \label{eq:xdeltaalpha}
\end{align}
We report hereafter the definition of optimal order, under the
same a-priori assumption given in \cite{engl1996regularization}.

\begin{definition}
For every given $\nu, \rho >0$, let
$$
\X_{\nu, \rho} := \left \{ x \in \X : \exists \, \omega \in \X, \| \omega \| \leq \rho, x = (K^* K)^{\frac{\nu}{2}}\omega \right \} \subset \X.
$$
A regularization method $R_\alpha$ is called of \emph{optimal order} under the a-priori assumption $x^\dagger \in  \X_{\nu, \rho}$ \nolinebreak if
\begin{equation}\label{eq:1.6}
\Delta (\delta, \X_{\nu, \rho}, R_\alpha) \leq c \cdot \delta^{\frac{\nu}{\nu +1}} \rho ^{\frac{1}{\nu +1}},
\end{equation}
where for any general set $M \subseteq X$, $\delta >0$ and for a regularization method $R_\alpha$, we define
$$
\Delta (\delta, M, R_\alpha)  := \sup \left\{  \| x^\dagger - x_\alpha ^\delta \| : x^\dagger \in M,\, \| y - y^\delta \| \leq \delta \right\}.
$$
If $\rho$ is not known, as it will be usually the case, then we relax the definition introducing the set
$$
\X_\nu := \bigcup_{\rho >0} \X_{\nu, \rho}
$$
and saying that a regularization method $R_\alpha$ is called of \emph{optimal order} under the a-priori assumption $x^\dagger \in  \X_{\nu}$ if
\begin{equation}\label{eq:1.6.1}
\Delta (\delta, \X_{\nu}, R_\alpha) \leq c \cdot \delta^{\frac{\nu}{\nu +1}}.
\end{equation}
\end{definition}

\begin{remark}
Since we are concerned with the rate that $ \|x^\dagger - x_\alpha ^\delta \| $ converges to zero as $\delta \to 0$, the a-priori assumption $x^\dagger \in  \X_{\nu}$ is usually sufficient for the optimal order analysis, requiring that \eqref{eq:1.6.1} is satisfied.
\end{remark}

Hereafter we cite a theorem which states sufficient conditions for order optimality, when  filtering methods are employed, see \cite[Proposition 3.4.3, pag. 58]{louis65075inverse}.
\begin{theorem}{\cite{louis65075inverse}}\label{proposition:2.1}
Let $K : \X \to \Y$ be a compact linear operator, $\nu$ and $\rho >0$,
and let $R_\alpha : \Y \to \X$ be a filter based regularization method.
If there exists a fixed $\beta >0$ such that
\begin{subequations}\label{eq:2}
\begin{equation}
    \sup_{0< \sigma \leq \sigma_1} | F_\alpha (\sigma) \sigma ^{-1}| \leq c \cdot \alpha^{-\beta}, \label{eq:2.1}
\end{equation}
\begin{equation}
    \sup_{0 \leq \sigma \leq \sigma_1} | (1 - F_\alpha (\sigma)) \sigma ^\nu | \leq c_\nu \cdot \alpha^{\beta \nu}, \label{eq:2.2}
\end{equation}
\end{subequations}
then 
 $R_\alpha$ is of optimal order, under the a-priori assumption $x^\dagger \in \X_{\nu, \rho}$, with the choice rule 
$$
\alpha = \alpha (\delta, \rho) = O \left ( \frac{\delta}{\rho} \right)^{\frac{1}{\beta (\nu +1)}}.
$$
\end{theorem}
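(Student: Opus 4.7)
The plan is to decompose the total error into a data propagation error and an approximation error, bound each separately using the two hypotheses \eqref{eq:2.1} and \eqref{eq:2.2}, and then balance the two contributions via the stated choice of $\alpha$.

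First I would write
\[
x^\dagger - x_\alpha^\delta \;=\; (x^\dagger - x_\alpha) \;+\; (x_\alpha - x_\alpha^\delta),
\]
with $x_\alpha = R_\alpha y$ as in \eqref{eq:xalpha}. For the noise propagation term, I would use that $\|R_\alpha\| = c(\alpha)$ (from Proposition~\ref{prop:filter}) together with \eqref{eq:2.1} to get
\[
\| x_\alpha - x_\alpha^\delta \| \;=\; \| R_\alpha (y - y^\delta) \| \;\le\; c(\alpha)\,\delta \;\le\; c\,\alpha^{-\beta}\,\delta.
\]

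Next, for the approximation error, I would use the filter representation \eqref{eq:def_filter} combined with \eqref{eq:1.1} to write, for $y = Kx^\dagger$,
\[
x_\alpha \;=\; R_\alpha K x^\dagger \;=\; \sum_{m:\, \sigma_m>0} F_\alpha(\sigma_m)\,\langle x^\dagger, v_m\rangle\, v_m,
\]
so that $x^\dagger - x_\alpha = \sum_m (1 - F_\alpha(\sigma_m))\,\langle x^\dagger, v_m\rangle\, v_m$ (on $\overline{R(K^*)}$; the component in $\ker K$ is handled trivially since $x^\dagger \in \X_{\nu,\rho}$ lies in $\overline{R(K^*)}$). Invoking the source condition $x^\dagger = (K^*K)^{\nu/2}\omega$ with $\|\omega\|\le\rho$ gives $\langle x^\dagger, v_m\rangle = \sigma_m^{\nu}\langle \omega, v_m\rangle$, hence
\[
\| x^\dagger - x_\alpha \|^2 \;=\; \sum_{m} \bigl|(1-F_\alpha(\sigma_m))\,\sigma_m^{\nu}\bigr|^2\, |\langle \omega, v_m\rangle|^2 \;\le\; \bigl(c_\nu \alpha^{\beta\nu}\bigr)^{2}\,\rho^{2},
\]
by \eqref{eq:2.2}, yielding $\| x^\dagger - x_\alpha \| \le c_\nu\, \alpha^{\beta\nu}\,\rho$.

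Combining the two bounds gives
\[
\| x^\dagger - x_\alpha^\delta \| \;\le\; c\,\alpha^{-\beta}\delta \;+\; c_\nu\,\alpha^{\beta\nu}\,\rho,
\]
and the final step is to balance the two terms. Setting $\alpha^{-\beta}\delta \asymp \alpha^{\beta\nu}\rho$ forces $\alpha^{\beta(\nu+1)} \asymp \delta/\rho$, i.e.\ the prescribed choice $\alpha = O((\delta/\rho)^{1/(\beta(\nu+1))})$; substituting back produces the bound $c\,\delta^{\nu/(\nu+1)}\rho^{1/(\nu+1)}$ and the optimality estimate \eqref{eq:1.6}. I do not foresee any real obstacle: the only care needed is the decomposition of $x^\dagger$ into its components inside and outside $\ker K$ (so that the spectral sum is legitimate), and verifying that the taken supremum over $\sigma$ in \eqref{eq:2.1}--\eqref{eq:2.2} dominates the discrete values $\sigma_m \in \sigma(K)$, both of which are immediate.
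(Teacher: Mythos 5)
Your proof is correct and is exactly the standard bias--variance argument for filter-based methods: noise propagation bounded via $\|R_\alpha\|=c(\alpha)\le c\,\alpha^{-\beta}$ from \eqref{eq:2.1}, approximation error bounded via the source condition and \eqref{eq:2.2}, then balancing the two terms with $\alpha \asymp (\delta/\rho)^{1/(\beta(\nu+1))}$. The paper does not reprove this result but delegates it to the cited reference, where the argument is essentially the one you give, so there is nothing to add.
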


If we are concerned just about the rate of convergence with respect to only $\delta$, the preceding theorem can be applied under the a-priori assumption $x^\dagger \in X_\nu$, fitting the proof to the latter case without any effort. On the contrary, below we present a converse result.

\begin{theorem}\label{thm:con.result}
Let $K$ be a compact linear operator with infinite dimensional range and let $R_\alpha$ be a filter based regularization method with filter function $F_\alpha : [0, \sigma_1] \supset \sigma(K) \to \mathbb{R}$. If there exist $\nu$ and $\beta>0$ such that 
\begin{equation}\label{eq:conv.result1}
\left(1- F_\alpha (\sigma) \right) \sigma^\nu \geq c \alpha^{\beta \nu} \qquad \mbox{for } \sigma \in [c' \alpha^\beta, \sigma_1]
\end{equation}
and
\begin{equation}\label{eq:conv.result2}
\| x^\dagger - x_\alpha \| = O(\alpha^{\beta \nu}),
\end{equation}
then $x^\dagger \in \X_\nu$.
\end{theorem}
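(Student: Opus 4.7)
The plan is to work with the singular value expansion of the residual $x^\dagger - x_\alpha$ and exploit the hypothesised lower bound on $(1-F_\alpha)\sigma^\nu$ to transfer the upper bound $\|x^\dagger - x_\alpha\| = O(\alpha^{\beta\nu})$ into the summability condition that characterises $\X_\nu$.

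First I would observe that, since $y \in \mathcal{D}(K^\dagger)$ and $x^\dagger = K^\dagger y \in N(K)^\perp$, we have $\langle y, u_m\rangle = \sigma_m \langle x^\dagger, v_m\rangle$, so \eqref{eq:def_filter} gives
\begin{equation*}
x^\dagger - x_\alpha \;=\; \sum_{m:\,\sigma_m>0} \bigl(1 - F_\alpha(\sigma_m)\bigr)\,\langle x^\dagger, v_m\rangle\, v_m,
\end{equation*}
and hence, by Parseval,
\begin{equation*}
\|x^\dagger - x_\alpha\|^2 \;=\; \sum_{m:\,\sigma_m>0} \bigl(1-F_\alpha(\sigma_m)\bigr)^2\,|\langle x^\dagger, v_m\rangle|^2.
\end{equation*}

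Next I would use hypothesis \eqref{eq:conv.result1}: squaring, for every $m$ with $\sigma_m \geq c'\alpha^\beta$ one has $(1-F_\alpha(\sigma_m))^2 \geq c^2 \alpha^{2\beta\nu}\,\sigma_m^{-2\nu}$. Restricting the sum above to those indices and combining with hypothesis \eqref{eq:conv.result2} yields
\begin{equation*}
c^2\alpha^{2\beta\nu}\sum_{m:\,\sigma_m \geq c'\alpha^\beta}\sigma_m^{-2\nu}\,|\langle x^\dagger, v_m\rangle|^2 \;\leq\; \|x^\dagger - x_\alpha\|^2 \;\leq\; C\,\alpha^{2\beta\nu},
\end{equation*}
so dividing by $c^2\alpha^{2\beta\nu}$ gives the $\alpha$-independent bound
\begin{equation*}
\sum_{m:\,\sigma_m \geq c'\alpha^\beta}\sigma_m^{-2\nu}\,|\langle x^\dagger, v_m\rangle|^2 \;\leq\; C/c^2.
\end{equation*}
Letting $\alpha \to 0^+$, the truncation recedes to cover every index with $\sigma_m>0$, and monotone convergence (the partial sums are non-decreasing in the shrinking threshold) gives
\begin{equation*}
\sum_{m:\,\sigma_m>0}\sigma_m^{-2\nu}\,|\langle x^\dagger, v_m\rangle|^2 \;\leq\; C/c^2 \;<\; \infty.
\end{equation*}

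Finally I would define $\omega := \sum_{m:\,\sigma_m>0}\sigma_m^{-\nu}\langle x^\dagger, v_m\rangle\, v_m$, which is an element of $\X$ by the summability just proved and satisfies $\|\omega\|^2 \leq C/c^2$. Applying the functional calculus for $(K^*K)^{\nu/2}$ and using that $x^\dagger$ has no component in $N(K)$, one verifies $(K^*K)^{\nu/2}\omega = \sum_m \sigma_m^\nu \cdot \sigma_m^{-\nu}\langle x^\dagger, v_m\rangle\, v_m = x^\dagger$, so $x^\dagger \in \X_\nu$ as claimed.

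The only delicate point I expect is the passage to the limit $\alpha\to 0^+$: one must justify that the monotone union of the index sets $\{m : \sigma_m \geq c'\alpha^\beta\}$ exhausts $\{m : \sigma_m>0\}$ (which holds because $\sigma_m>0$ for each such $m$ and $c'\alpha^\beta\to 0$), and that the uniform-in-$\alpha$ bound survives the limit. This is a routine application of monotone convergence once the inequality has been decoupled from $\alpha$, so the crux of the argument is really just the algebraic cancellation of $\alpha^{2\beta\nu}$ in the displayed chain above.
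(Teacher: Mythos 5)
Your proposal is correct and follows essentially the same argument as the paper: expand $\|x^\dagger - x_\alpha\|^2$ in the singular system, use the lower bound \eqref{eq:conv.result1} on the indices with $\sigma_m \geq c'\alpha^\beta$ to cancel $\alpha^{2\beta\nu}$ against \eqref{eq:conv.result2}, let $\alpha\to 0$ to get the full sum finite, and define $\omega=\sum_{\sigma_m>0}\sigma_m^{-\nu}\langle x^\dagger,v_m\rangle v_m$ with $(K^*K)^{\nu/2}\omega=x^\dagger$. Your explicit justification of the limit passage via monotone convergence is a point the paper leaves implicit, but it is the same proof.
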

\begin{proof}
By \eqref{eq:xdag} and \eqref{eq:def_filter}, it holds
\begin{align*}
\| x^\dagger - x_\alpha \|^2 &= \sum_{\sigma_m >0} \left(1 - F_\alpha(\sigma_m) \right)^2 \sigma_m ^{-2} | \langle y, u_m \rangle |^2 \\
&= \sum_{\sigma_m >0} \left(1 - F_\alpha(\sigma_m) \right)^2  | \langle x^\dagger, v_m \rangle |^2 \\
&=  \sum_{\sigma_m >0} \left[ \left(1 - F_\alpha (\sigma_m) \right)\sigma_m ^\nu \right]^2 \sigma_m ^{-2\nu} | \langle x^\dagger, v_m \rangle |^2 \\
&\geq \left(c\alpha^{\beta \nu} \right)^2 \sum_{\sigma_m \geq c' \alpha^\beta} \sigma_m ^{-2\nu} |\langle x^\dagger, v_m \rangle|^2.
\end{align*}
thanks to the assumption \eqref{eq:conv.result1}.
From \eqref{eq:conv.result2} we deduce that 
$$
\lim_{\alpha^\beta \to 0} \sum_{\sigma_m \geq c' \alpha^\beta} \sigma_m ^{-2\nu} |\langle x^\dagger, v_m \rangle|^2 < +\infty.
$$
Finally, if we define $\omega := \sum_{\sigma_m >0} \sigma^{-\nu} \langle x^\dagger, v_m \rangle v_m$, then $\omega$ is well defined and $\left(K^* K\right)^{\nu/2} \omega = x^\dagger$, i.e., $x^\dagger \in X_\nu$.
\end{proof}

\section{Fractional variants of Tikhonov regularization}\label{sec:fract} 
In this section we discuss two recent types of regularization methods that generalize the classical Tikhonov method and that were first introduced and studied in \cite{hochstenbach2011fractional} and \cite{klann2008regularization}.

\subsection{Weighted Tikhonov regularization}\label{sec:weighted} 

\begin{definition}[\cite{hochstenbach2011fractional}]\label{def:weightedTik}
We call \emph{Weighted Tikhonov} method the filter based method
\[
R_{\alpha, r} y := \sum_{m:\, \sigma_m >0} F_{\alpha, r} (\sigma_m) \sigma_m ^{-1} \langle y, u_m \rangle v_m,
\]
where the filter function is
\begin{equation}
F_{\alpha, r} (\sigma) = \frac{\sigma ^{r + 1}}{\sigma ^{r +1} + \alpha}, \label{eq:3.2}
\end{equation}
for $\alpha >0$ and $r \geq 0$.
\end{definition}

According to \eqref{eq:xalpha} and \eqref{eq:xdeltaalpha}, we fix the following notation
\begin{align}
x_{\alpha, r} & := R_{\alpha, r} y, \qquad y \in \mathcal{D}(K^\dagger), \label{eq:xweight}\\
x_{\alpha, r}^\delta & := R_{\alpha, r} y^\delta, \qquad y^\delta \in \Y.  \label{eq:xdweight}
\end{align}

\begin{remark}\label{rem:weightedTik}
The Weighted Tikhonov method can also be defined as the unique minimizer of the following functional,
\begin{equation}\label{eq:7.1}
R_{\alpha, r} y := {\rm argmin}_{x \in X} \left\{ \| Kx -y \|_W + \alpha \|x\|   \right\},
\end{equation}
where the semi-norm $\| \cdot \|_W$ is induced by the operator $W:= \left( K K^* \right)^{\frac{r-1}{2}}$. For $0 \leq r < 1$, $W$ is to be intended as the Moore-Penrose (pseudo) inverse. Developing the calculations, it follows that
\begin{equation}\label{eq:7.2}
R_{\alpha, r} y = \left[ \left( K^* K \right)^{\frac{r+1}{2}}  + \alpha I \right]^{-1} \left( K^* K \right)^{\frac{r-1}{2}}K^* y.
\end{equation}
That is the reason that motivated us to rename the original method of Hochstenbach and Reichel, that appeared in \cite{hochstenbach2011fractional}, into \emph{weighted Tikhonov method}. In this way it would be easier to distinguish from the \emph{fractional Tikhonov method} introduced by Klann and Ramlau in \cite{klann2008regularization}.
\end{remark}

The optimal order of the weighted Tikhonov regularization was proved in \cite{Gert2015fractional}. The following proposition restates such result, putting in evidence the dependence on $r$ of $\nu$, 
and provides a converse result.

\begin{proposition}
Let $K$ be a compact linear operator with infinite dimensional range. For every given $r\geq 0$ the weighted Tikhonov method, $R_{\alpha, r}$, is a regularization method
of optimal order, under the a-priori assumption $x^\dagger \in X_{\nu, \rho}$, with $0< \nu \leq r +1$. The best possible rate of convergence with respect to $\delta$ is $\|x^\dagger - x_{\alpha, r}^\delta \| = O \left( \delta^{\frac{r +1}{r +2}}\right)$, that is obtained for $\alpha = \left( \frac{\delta}{\rho} \right)^{\frac{r+1}{\nu +1}}$ with $\nu = r +1$. On the other hand, if $\| x^\dagger - x_{\alpha, r} \| = O(\alpha)$ then $x^\dagger \in \X_{r+1}$.
\end{proposition}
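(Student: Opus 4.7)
The plan is to invoke Theorem~\ref{proposition:2.1} for the optimality statement and Theorem~\ref{thm:con.result} for the converse, both with the natural exponent $\beta = \frac{1}{r+1}$ dictated by the structure of the filter $F_{\alpha,r}(\sigma) = \sigma^{r+1}/(\sigma^{r+1}+\alpha)$.

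First I would verify condition \eqref{eq:2.1}. The function $\sigma \mapsto F_{\alpha,r}(\sigma)/\sigma = \sigma^r/(\sigma^{r+1}+\alpha)$ attains its maximum at the unique critical point $\sigma^{r+1} = r\alpha$; substituting back gives a bound of the form $c\,\alpha^{-1/(r+1)} = c\,\alpha^{-\beta}$, as required. Next, for \eqref{eq:2.2} I would rewrite $(1 - F_{\alpha,r}(\sigma))\sigma^\nu = \alpha\sigma^\nu / (\sigma^{r+1}+\alpha)$ and bound the denominator from below via Young's inequality $\sigma^{(r+1)a}\alpha^{1-a} \leq a\sigma^{r+1} + (1-a)\alpha$, applied with $a = \nu/(r+1)$. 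The hypothesis $0 < \nu \leq r+1$ is exactly what is needed to keep $a \in (0,1]$, and the resulting bound is $c_\nu\,\alpha^{\nu/(r+1)} = c_\nu\,\alpha^{\beta\nu}$.

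Once \eqref{eq:2.1}--\eqref{eq:2.2} are in place, Theorem~\ref{proposition:2.1} delivers optimality of the weighted Tikhonov method with choice rule $\alpha = O\bigl((\delta/\rho)^{1/(\beta(\nu+1))}\bigr) = O\bigl((\delta/\rho)^{(r+1)/(\nu+1)}\bigr)$ and error $O(\delta^{\nu/(\nu+1)}\rho^{1/(\nu+1)})$. Since $\nu/(\nu+1)$ is increasing in $\nu$, the best attainable rate in $\delta$ within the admissible range is reached at the endpoint $\nu = r+1$, yielding the announced $O(\delta^{(r+1)/(r+2)})$.

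For the converse I would specialize Theorem~\ref{thm:con.result} to $\nu = r+1$, so that $\beta\nu = 1$ matches the hypothesis $\|x^\dagger - x_{\alpha,r}\| = O(\alpha)$. It remains to check \eqref{eq:conv.result1}: for $\sigma \geq c'\alpha^{1/(r+1)} = c'\alpha^\beta$ one has $\sigma^{r+1} \geq c'^{r+1}\alpha$, hence
\[
(1 - F_{\alpha,r}(\sigma))\,\sigma^{r+1} \;=\; \frac{\alpha\,\sigma^{r+1}}{\sigma^{r+1}+\alpha} \;\geq\; \frac{c'^{r+1}}{c'^{r+1}+1}\,\alpha \;=\; c\,\alpha^{\beta\nu}.
\]
Theorem~\ref{thm:con.result} then concludes $x^\dagger \in \X_{r+1}$. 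The only delicate step is the Young-inequality bound in the forward direction, where one must be attentive to the fact that the threshold $\nu = r+1$ appears as the boundary of admissibility $a \leq 1$; this is precisely what makes the converse tight and explains why the saturation index for the method equals $r+1$.
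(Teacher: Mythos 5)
Your proposal is correct and follows essentially the same route as the paper: verify conditions \eqref{eq:2.1}--\eqref{eq:2.2} for the filter $F_{\alpha,r}$ with $\beta=1/(r+1)$, invoke Theorem~\ref{proposition:2.1} to get optimality and the rate $O(\delta^{(r+1)/(r+2)})$ at $\nu=r+1$, and then apply Theorem~\ref{thm:con.result} with the lower bound $\alpha\sigma^{r+1}/(\sigma^{r+1}+\alpha)\geq c\,\alpha$ on $[c'\alpha^{\beta},\sigma_1]$ for the converse. The only difference is cosmetic: where the paper says \eqref{eq:2.2} is ``easy to check,'' you make the estimate explicit via the weighted Young/AM--GM inequality, and you keep a general constant $c'$ where the paper simply takes $c'=1$.
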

\begin{proof}
For weighted Tikhonov the left-hand side of condition \eqref{eq:2.1} becomes
$$
\sup_{0< \sigma \leq \sigma_1} \left| \frac{\sigma^{r}}{\sigma^{r +1} + \alpha} \right|.
$$
By derivation,  if $r >0 $ then it is straightforward to see that the quantity above is bounded
by $\alpha ^{-\beta}$, with $\beta = 1/(r +1)$.
Similarly, the left-hand side of condition \eqref{eq:2.2} takes the
form
$$
\sup_{0 \leq \sigma \leq \sigma_1} \left|   \frac{\alpha \sigma^\nu}{\sigma^{r +1} + \alpha } \right|,
$$
and it is easy to check that it is bounded by $\alpha ^{\beta
\nu}$ if and only if $0 < \nu \leq r +1$. From
Theorem~\ref{proposition:2.1}, as long as $0 < \nu \leq r +1$,
with $r > 0$, if $x^\dagger \in \X_{\nu, \rho}$ then we find order
optimality \eqref{eq:1.6} and the best possible rate of convergence obtainable with respect to $\delta$ is
$O(\delta ^{\frac{r +1}{\nu +1}})$, for $\nu = r +1$.

On the contrary, with $\beta = 1/(r +1)$ and $\nu = r+1$, we deduce that
$$
\left| \left( 1 - F_{\alpha,r}(\sigma) \right)\sigma^{\nu} \right|
= \frac{\alpha \sigma^{\nu}}{\sigma^{r +1} + \alpha} \geq
\frac{1}{2} \alpha, \qquad \mbox{for } \sigma \in [\alpha^\beta,
\sigma_1].
$$
Therefore, if $\| x^\dagger - x_{\alpha,
r}\| = O (\alpha)$ then $x^\dagger
\in \X_{\nu}$ by Theorem \ref{thm:con.result}.
\end{proof}

\subsection{Fractional Tikhonov regularization} \label{sec:fractional}
Here we introduce the \emph{fractional Tikhonov} method defined and discussed in \cite{klann2008regularization}.

\begin{definition}[\cite{klann2008regularization}]\label{def:fract}
We call \emph{Fractional Tikhonov} method the filter based method
\[
R_{\alpha, \gamma} y := \sum_{m:\, \sigma_m >0} F_{\alpha, \gamma} (\sigma_m) \sigma_m ^{-1} \langle y, u_m \rangle v_m,
\]
where the filter function is
\begin{equation}
F_{\alpha, \gamma} (\sigma) = \frac{\sigma ^{2\gamma}}{(\sigma ^2 + \alpha)^\gamma}, \label{eq:3.1b}
\end{equation}
for $\alpha >0$ and $\gamma \geq 1/2$.
\end{definition}

Note that  $F_{\alpha, \gamma} $ is well-defined also for $0<\gamma<1/2$, but the condition \eqref{eq:1.3} requires 
 $\gamma \geq 1/2$ to guarantee that $F_{\alpha, \gamma} $ is a filter function.

We use the notation for $x_{\alpha, \gamma}$ and $x_{\alpha, \gamma}^\delta$ like in equations 
\eqref{eq:xweight} and \eqref{eq:xdweight}, respectively.
The optimal order of the fractional Tikhonov regularization was proved in \cite[Proposition 3.2]{klann2008regularization}. 
The following proposition restates such result including also $\gamma=1/2$
and provides a converse result.

\begin{proposition}
The extended fractional Tikhonov filter method is a regularization method of optimal order, under the a-priori assumption $x^\dagger \in X_{\nu, \rho}$, for every $\gamma \geq 1/2$ and $0 < \nu \leq 2$. The best possible rate of convergence with respect to $\delta$ is $\|x^\dagger - x_{\alpha, \gamma}^\delta \| = O \left( \delta^{\frac{2}{3}}\right)$, that is obtained for $\alpha = \left( \frac{\delta}{\rho} \right)^{\frac{2}{\nu +1}}$ with $\nu = 2$. On the other hand, if $\| x^\dagger - x_{\alpha, \gamma} \| = O(\alpha)$ then $x^\dagger \in \X_2$.
\end{proposition}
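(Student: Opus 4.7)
The plan is to verify the two sufficient conditions of Theorem~\ref{proposition:2.1} for the filter $F_{\alpha,\gamma}(\sigma)=\sigma^{2\gamma}/(\sigma^2+\alpha)^\gamma$ with $\beta=1/2$, then apply Theorem~\ref{thm:con.result} to obtain the converse.

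For \eqref{eq:2.1}, I would study $g(\sigma):=F_{\alpha,\gamma}(\sigma)\sigma^{-1}=\sigma^{2\gamma-1}/(\sigma^2+\alpha)^\gamma$. For $\gamma>1/2$, a direct computation of $g'(\sigma)=0$ yields a unique interior critical point at $\sigma_\ast=\sqrt{(2\gamma-1)\alpha}$, at which $g(\sigma_\ast)$ equals a $\gamma$-dependent constant times $\alpha^{-1/2}$. For the boundary case $\gamma=1/2$ the filter simplifies to $1/\sqrt{\sigma^2+\alpha}\le \alpha^{-1/2}$, so the bound $\sup_\sigma g(\sigma)\le c\,\alpha^{-1/2}$ holds uniformly, giving $\beta=1/2$.

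For \eqref{eq:2.2} with $0<\nu\le 2$, I would split the interval at $\sigma=\sqrt{\alpha}$. On $\{\sigma\le\sqrt\alpha\}$ the factor $\sigma^\nu$ is already bounded by $\alpha^{\nu/2}$ while $|1-F_{\alpha,\gamma}|\le 1$. On $\{\sigma>\sqrt\alpha\}$ I would write
\[
1-F_{\alpha,\gamma}(\sigma)=1-\Bigl(1-\tfrac{\alpha}{\sigma^2+\alpha}\Bigr)^{\!\gamma}\le c_\gamma\,\tfrac{\alpha}{\sigma^2+\alpha}\le c_\gamma\,\tfrac{\alpha}{\sigma^2},
\]
using the elementary estimate $1-(1-t)^\gamma\le c_\gamma t$ valid for $t\in[0,1]$ and $\gamma\ge 1/2$ (handled separately for $\gamma\ge 1$ via Bernoulli and for $\gamma\in[1/2,1)$ via concavity of $t\mapsto 1-(1-t)^\gamma$). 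Then $(1-F_{\alpha,\gamma}(\sigma))\sigma^\nu\le c_\gamma\,\alpha\,\sigma^{\nu-2}\le c_\gamma\,\alpha^{\nu/2}$, since $\nu-2\le 0$ and $\sigma>\sqrt\alpha$. Combining the two regions gives \eqref{eq:2.2} uniformly for $0<\nu\le 2$ with $\beta=1/2$. Theorem~\ref{proposition:2.1} then yields optimal order with rate $\delta^{\nu/(\nu+1)}$, maximised at $\nu=2$ to give the claimed $O(\delta^{2/3})$ and corresponding choice of $\alpha$.

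For the converse statement, I take $\nu=2$, $\beta=1/2$ and aim to verify \eqref{eq:conv.result1}. The key observation is that, via mean value theorem, there is a constant $\tilde c_\gamma>0$ with $1-(1-t)^\gamma\ge \tilde c_\gamma t$ for $t\in[0,1/2]$; applied with $t=\alpha/(\sigma^2+\alpha)$ for $\sigma\ge c'\sqrt\alpha$ with $c'\ge 1$ (so $t\le 1/2$), it yields
\[
(1-F_{\alpha,\gamma}(\sigma))\sigma^2\ \ge\ \tilde c_\gamma\,\frac{\alpha\,\sigma^2}{\sigma^2+\alpha}\ \ge\ \tilde c_\gamma\,\frac{(c')^2}{(c')^2+1}\,\alpha,
\]
using that $\sigma^2/(\sigma^2+\alpha)$ is increasing. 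This is exactly \eqref{eq:conv.result1}, so Theorem~\ref{thm:con.result} gives $x^\dagger\in\X_2$ from $\|x^\dagger-x_{\alpha,\gamma}\|=O(\alpha)$. The main obstacle I foresee is getting uniform two-sided comparisons $1-(1-t)^\gamma\asymp t$ that work for all $\gamma\ge 1/2$, since the geometry of the filter changes qualitatively at $\gamma=1/2$ and at $\gamma=1$; handling these ranges together (with constants depending only on $\gamma$) is where care is needed.
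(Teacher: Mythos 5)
Your argument is correct in substance and reaches the same conclusions, but it proceeds along a somewhat different route than the paper. Where you estimate the residual factor $1-F_{\alpha,\gamma}(\sigma)=1-(1-t)^\gamma$, $t=\alpha/(\sigma^2+\alpha)$, directly by the elementary bounds $1-(1-t)^\gamma\le c_\gamma t$ (with a split of $[0,\sigma_1]$ at $\sigma=\sqrt{\alpha}$ for \eqref{eq:2.2}) and $1-(1-t)^\gamma\ge \tilde c_\gamma t$ for $t\le 1/2$ (for \eqref{eq:conv.result1}), the paper instead writes $(1-F_{\alpha,\gamma}(\sigma))\sigma^\nu = h(\sigma^2/\alpha)\,(1-F_{\alpha,1}(\sigma))\sigma^\nu$ with $h(x)=\frac{(x+1)^\gamma-x^\gamma}{(x+1)^{\gamma-1}}$, deduces the two-sided comparison $\min\{1,\gamma\}\,(1-F_{\alpha,1})\le 1-F_{\alpha,\gamma}\le\max\{1,\gamma\}\,(1-F_{\alpha,1})$ (inequalities \eqref{eq:4.2a}--\eqref{eq:4.2b}), and then simply inherits \eqref{eq:2.2} and the lower bound for Theorem~\ref{thm:con.result} from the known properties of standard Tikhonov. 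The two arguments encode the same comparison with the Tikhonov residual; yours is self-contained and does not appeal to the optimality of classical Tikhonov, while the paper's formulation has the advantage that \eqref{eq:4.2a}--\eqref{eq:4.2b} are reused later (in the saturation result of Proposition~\ref{prop:satfract} and in the iterated method of Proposition~\ref{it-frac}). One small repair is needed in your write-up: for $\gamma\in[1/2,1)$ the map $t\mapsto 1-(1-t)^\gamma$ is convex, not concave (its second derivative is $\gamma(1-\gamma)(1-t)^{\gamma-2}>0$); the inequality $1-(1-t)^\gamma\le t$ on $[0,1]$ nevertheless holds trivially because $(1-t)^\gamma\ge 1-t$ for $\gamma\le 1$, so the conclusion is unaffected. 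Your mean-value-theorem lower bound and the resulting verification of \eqref{eq:conv.result1} on $[c'\alpha^{1/2},\sigma_1]$ are correct, and the application of Theorems~\ref{proposition:2.1} and \ref{thm:con.result} is as intended.
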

\begin{proof}
Condition \eqref{eq:1.3} is verified for $\gamma \geq 1/2$ and the same holds for conditions \eqref{eq:1.4} and \eqref{eq:1.5}. Deriving the filter function, it is immediate to see that equation \eqref{eq:2.1} is verified for $\gamma \geq 1/2$, with $\beta = 1/2$. It remains to check equation \eqref{eq:2.2}: 
\begin{align*}
\left(1- F_{\alpha, \gamma}(\sigma)  \right) \sigma^\nu &= \frac{\left( \sigma^2 + \alpha \right)^\gamma - \sigma^{2\gamma}}{\left(  \sigma^2 + \alpha \right)^\gamma}\sigma^\nu \\
&=  \frac{\left( \frac{\sigma^2}{\alpha} + 1 \right)^\gamma - \left( \frac{\sigma^2}{\alpha} \right)^\gamma}{\left(  \frac{\sigma^2}{\alpha} + 1 \right)^{\gamma-1}} \cdot \frac{\alpha \sigma^\nu}{\sigma^2 + \alpha} \\
&=  h\left(\frac{\sigma^2}{\alpha}\right) \cdot \left( 1- F_{\alpha, 1}(\sigma) \right) \sigma^\nu,
\end{align*}
where $h(x)=\frac{(x+1)^\gamma-x^\gamma}{(x+1)^{\gamma-1}}$ is monotone, $h(0)=1$ for every $\gamma$, and $\lim_{x\to \infty} h(x) = \gamma$. 
Namely $h(x) \in (\gamma, 1]$ for $0 \leq \gamma \leq 1$ and $h(x) \in [1,\gamma)$ for $\gamma \geq 1$. Therefore we deduce that
\begin{align}\label{eq:4.2a}
& \gamma \left(  1 - F_{\alpha, 1}(\sigma)\right) \leq \left( 1 - F_{\alpha, \gamma}(\sigma) \right) \leq \left( 1 - F_{\alpha, 1}(\sigma) \right), \qquad \mbox{for } 0\leq \gamma \leq 1, \\
& \left(  1 - F_{\alpha, 1}(\sigma)\right) \leq \left( 1 - F_{\alpha, \gamma}(\sigma) \right) \leq \gamma \left( 1 - F_{\alpha, 1}(\sigma) \right), \qquad \mbox{for } \gamma \geq 1, \label{eq:4.2b}
\end{align}
from which we infer that
\begin{equation}
\sup_{\sigma \in [0, \sigma_1]} \left| \left(1- F_{\alpha, \gamma}(\sigma)  \right) \sigma^\nu \right| \leq \max\{1,\gamma\}  \sup_{\sigma \in [0, \sigma_1]} \left| \left(1- F_{\alpha, 1}(\sigma)  \right) \sigma^\nu \right| \leq c \alpha^{\frac{\nu}{2}},
\end{equation}
since $F_{\alpha, 1} (\sigma)$ is standard Tikhonov, that is of optimal order, with $\beta= 1/2$ and for every $0 < \nu \leq 2$, see \cite{engl1996regularization}.
On the contrary, with $\beta = 1/2$ and $\nu = 2$, and by equations \eqref{eq:4.2a} and \eqref{eq:4.2b}, we deduce that
\begin{equation}
\left(1 - F_{\alpha, \gamma} (\sigma)  \right) \sigma^2 \geq \min \{1, \gamma  \} \left(1 - F_{\alpha, 1} (\sigma)  \right) \sigma^2 \geq \frac{1}{2}\alpha, \qquad \mbox{for } \sigma \in [\alpha^{\frac{1}{2}}, \sigma_1].
\end{equation}
Therefore, if $\| x^\dagger - x_{\alpha,
r}\| = O (\alpha)$ then $x^\dagger
\in \X_{2}$ by Theorem \ref{thm:con.result}.
\end{proof}

\section{Saturation results}\label{sec:saturation}

The following proposition deals with a saturation result similar to a well known result for classic Tikhonov, cf. \cite[Proposition 5.3]{engl1996regularization}.

\begin{proposition}[Saturation for weighted Tikhonov regularization]\label{prop:satw}
Let $K : \X \to \Y$ be a compact linear operator with infinite dimensional range and $R_{\alpha, r}$ be the corresponding family of weighted Tikhonov regularization operators in Definition~\ref{def:weightedTik}. Let $\alpha = \alpha(\delta, y^\delta)$ be any parameter choice rule. If
\begin{equation}\label{eq:3.8}
\sup \left\{ \| x_{\alpha, r}^\delta - x^\dagger \| :\, \|Q( y - y^\delta) \| \leq \delta  \right\} = o(\delta^{\frac{r+1}{r+2}}),
\end{equation}
then $x^\dagger =0$, where we indicated with $Q$ the orthogonal projector onto $\overline{R(K)}$.
\end{proposition}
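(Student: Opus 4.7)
The plan is to argue by contradiction: assume $x^\dagger\neq 0$ and derive a violation of the hypothesis \eqref{eq:3.8}. Write the error in the standard bias--variance form
\[
x_{\alpha,r}^\delta-x^\dagger \;=\; -e_\alpha \;+\; R_{\alpha,r}\bigl(y^\delta-y\bigr), \qquad e_\alpha := x^\dagger-x_{\alpha,r},
\]
and observe that $R_{\alpha,r}$ depends on $y^\delta$ only through $Qy^\delta$, since the filter expansion uses only the coefficients $\langle y^\delta,u_m\rangle$ with $\{u_m\}\subset\overline{R(K)}$. The two canonical lower bounds I would extract are
\[
\sup\bigl\{\|x_{\alpha,r}^\delta - x^\dagger\|:\,\|Q(y-y^\delta)\|\leq\delta\bigr\} \;\geq\; \max\bigl\{\|e_\alpha\|,\; \delta\,c(\alpha)\bigr\},
\]
with $c(\alpha)=\|R_{\alpha,r}\|$: the first from the admissible choice $y^\delta=y$, the second by fixing an approximate maximiser $z\in\overline{R(K)}$, $\|z\|=1$, of $\|R_{\alpha,r}z\|$ and applying the parallelogram identity to the two admissible perturbations $y\pm\delta z$.

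An elementary calculus argument shows $c(\alpha)=\sup_{\sigma>0}\sigma^r/(\sigma^{r+1}+\alpha)\asymp\alpha^{-1/(r+1)}$, attained at $\sigma^\ast=(r\alpha)^{1/(r+1)}$ (with the obvious modification at $r=0$); because $K$ has infinite-dimensional range, $0$ is an accumulation point of $\sigma(K)$, so some $\sigma_m$ lies within a constant factor of $\sigma^\ast$ for $\alpha$ small, and hence $c(\alpha)\geq c_0\,\alpha^{-1/(r+1)}$. Feeding both lower bounds into the hypothesis yields simultaneously
\[
\|e_\alpha\|=o\!\bigl(\delta^{(r+1)/(r+2)}\bigr) \qquad\text{and}\qquad \delta\,\alpha^{-1/(r+1)}=o\!\bigl(\delta^{(r+1)/(r+2)}\bigr),
\]
the latter being equivalent to $\alpha/\delta^{(r+1)/(r+2)}\to\infty$. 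Multiplying the two estimates gives $\|e_\alpha\|/\alpha \to 0$ as $\delta\to 0$ (with $\alpha\to 0$ forced by the regularization property).

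To close the argument I expand the bias in the singular basis,
\[
\frac{\|e_\alpha\|^{2}}{\alpha^{2}} \;=\; \sum_{m:\,\sigma_m>0}\frac{|\langle x^\dagger,v_m\rangle|^{2}}{(\sigma_m^{r+1}+\alpha)^{2}},
\]
and let $\alpha\downarrow 0$: each summand increases monotonically to $|\langle x^\dagger,v_m\rangle|^2/\sigma_m^{2(r+1)}$, so by monotone convergence combined with $\|e_\alpha\|/\alpha\to 0$ the series $\sum_{\sigma_m>0}|\langle x^\dagger,v_m\rangle|^{2}/\sigma_m^{2(r+1)}$ must vanish. Hence $\langle x^\dagger,v_m\rangle=0$ for every $\sigma_m>0$, and since formula \eqref{eq:xdag} places $x^\dagger$ in $\overline{\operatorname{span}\{v_m:\sigma_m>0\}}$, we obtain $x^\dagger=0$, the desired contradiction.

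The delicate step I expect to be the main obstacle is the variance lower bound when the rule $\alpha(\delta,y^\delta)$ depends on $y^\delta$: the admissible choices $y^\delta=y$ and $y\pm\delta z$ may produce three different values of $\alpha$, so the parallelogram identity has to be applied with care (for instance by inserting triangle-inequality slack terms that still fit the $o(\delta^{(r+1)/(r+2)})$ budget, or by reducing to an a priori rule along a suitable subsequence). A secondary technical point, the transfer of the continuous supremum of the filter $\sigma^r/(\sigma^{r+1}+\alpha)$ to the discrete spectrum $\{\sigma_m\}$, again rests on the accumulation of $\sigma(K)$ at $0$ guaranteed by the infinite-dimensional range of $K$.
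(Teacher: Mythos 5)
Your plan is genuinely different from the paper's, but the step you yourself flag as delicate is an actual gap, not a removable technicality. The inequality $\sup\{\|x_{\alpha,r}^\delta-x^\dagger\|:\|Q(y-y^\delta)\|\le\delta\}\ \ge\ \delta\, c(\alpha)$ is not available for an arbitrary rule $\alpha=\alpha(\delta,y^\delta)$: the test data $y$, $y+\delta z$, $y-\delta z$ generate three different parameters $\alpha_0,\alpha_+,\alpha_-$, and the parallelogram/triangle argument only controls $\|R_{\alpha_+,r}(y+\delta z)-R_{\alpha_-,r}(y-\delta z)\|$, which is $\delta\|(R_{\alpha_+,r}+R_{\alpha_-,r})z\|$ up to the term $\|R_{\alpha_+,r}y-R_{\alpha_-,r}y\|$ that the hypothesis \eqref{eq:3.8} does not control (it only bounds errors where the parameter and the datum match). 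Neither of your suggested patches closes this: the slack terms you would insert are exactly of the uncontrolled mismatched type, and one cannot reduce to an a priori rule along a subsequence since $\alpha(\delta,\cdot)$ may vary arbitrarily with $y^\delta$. The damage propagates: your key multiplication step needs $\|e_\alpha\|=o(\delta^{(r+1)/(r+2)})$ and $\alpha\gg\delta^{(r+1)/(r+2)}$ for the \emph{same} $\alpha$, which is precisely what data-dependence destroys. A secondary flaw: $c(\alpha)\ge c_0\,\alpha^{-1/(r+1)}$ for all small $\alpha$ is false for sparse spectra (e.g.\ $\sigma_m=e^{-2^m}$); infinite-dimensional range gives only accumulation of $\{\sigma_m\}$ at $0$, not a $\sigma_m$ within a constant factor of $(r\alpha)^{1/(r+1)}$, so this part can only be saved by letting the noise level run through a sequence adapted to the singular values.

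The paper's proof is organized precisely to avoid both problems. It fixes $\delta_m=\sigma_m^{r+2}$ and the single perturbed datum $y_m^\delta=y+\delta_m u_m$, so only one parameter $\alpha_m=\alpha(\delta_m,y_m^\delta)$ ever occurs; the smallness of $\alpha_m$ is obtained not from a norm lower bound on $R_{\alpha,r}$ but from the identity $\bigl((K^*K)^{\frac{r+1}{2}}+\alpha_m I\bigr)(x^\dagger-x_m^\delta)=\alpha_m x^\dagger-\delta_m (K^*K)^{\frac{r-1}{2}}K^*u_m$, which gives $\alpha_m\|x^\dagger\|=O(\delta_m+\|x^\dagger-x_m^\delta\|)$ and hence $\alpha_m\delta_m^{-\frac{r+1}{r+2}}\to0$ under the assumption $x^\dagger\neq0$; the contradiction $0\ge1$ then comes from expanding $\|x_m^\delta-x^\dagger\|^2$ at that same datum, where the noise term contributes $F_{\alpha_m,r}(\sigma_m)\sigma_m^{-1}\delta_m\approx\delta_m^{\frac{r+1}{r+2}}$. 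To rescue your argument you would essentially have to import this device (perturbation along a singular vector with $\delta$ tuned to $\sigma_m$, plus the resolvent identity to control $\alpha$), at which point you have reproduced the paper's proof; as written, the proposal does not establish the proposition.
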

\begin{proof}
Define 
\begin{align*}
\delta_m &:= \sigma_m ^{r+2}, 
&y_m^\delta &:= y + \delta_m u_m \mbox{ so that } \|y - y_m^\delta \| \leq \delta_m, \\
\alpha_m &:= \alpha (\delta_m, y_m^\delta), 
&x_{m}&:=x_{\alpha_m , r},\hspace{3cm}
x_{m}^\delta := x_{\alpha_m , r} ^{\delta_m}.
\end{align*}
By the assumption that $K$ has not finite dimensional range, then $\sigma_m >0$ for every $m$ and $\lim_{m \to \infty} \sigma_m =0$. 
According to Remark~\ref{rem:weightedTik},
from equation \eqref{eq:7.2} we have 
$$
x_m^\delta - x^\dagger = R_{\alpha_m, r} y_m^\delta - x^\dagger =
R_{\alpha_m, r} y + \delta_mR_{\alpha_m, r}u_m - x^\dagger =  x_m - x^\dagger + \delta_mF_{\alpha_m, r}(\sigma_m)\sigma_m^{-1}v_m
$$
and hence by \eqref{eq:3.2}
$$
\| x_m ^\delta - x^\dagger \|^2 = \| x_m - x^\dagger \|^2 + 2 \frac{\delta_m \sigma_m ^r}{\sigma_m ^{r+1}+\alpha_m}\langle x_m - x^\dagger, v_m \rangle + \left( \frac{\delta_m \sigma_m ^r}{\sigma_m ^{r+1}+\alpha_m} \right)^2.
$$
From the choice of $\delta_m:=\sigma_m^{r+2}$ follows that
\begin{align}\label{eq:3.4.1}
\left( \delta_m ^{-\frac{r+1}{r+2}} \| x_m ^\delta - x^\dagger \| \right)^2 &\geq  \frac{2}{\delta_m ^{\frac{r+1}{r+2}}+\alpha_m}\langle x_m - x^\dagger, v_m \rangle + \left( \frac{\delta_m ^{\frac{r+1}{r+2}}}{\delta_m ^{\frac{r+1}{r+2}}+\alpha_m} \right)^2 \notag \\
&=   \frac{2}{1+ \delta_m ^{-{\frac{r+1}{r+2}}}\alpha_m}\delta_m ^{-{\frac{r+1}{r+2}}} \langle x_m - x^\dagger, v_m \rangle + \left( \frac{1}{1 + \delta_m ^{-{\frac{r+1}{r+2}}}\alpha_m} \right)^2.
\end{align}
By \eqref{eq:7.2},
\begin{align}
\left( (K^* K)^{\frac{r+1}{2}} + \alpha_m I \right) (x^\dagger - x_m ^\delta) &= 
\left(K^* K  \right)^{\frac{r+1}{2}} x^\dagger + \alpha_m x^\dagger - \left( K^* K \right)^{\frac{r-1}{2}}K^* y_m^\delta \notag \\
&= \alpha_m x^\dagger - \delta_m (K^* K)^{\frac{r-1}{2}}K^* u_m,
\end{align}
so that
\begin{equation}\label{eq:3.6}
\alpha_m \| x^\dagger \| = O (\delta_m + \| x^\dagger - x_m ^\delta \| ).
\end{equation}
Since, by assumption, $ \| x^\dagger - x_m ^\delta \| = o (\delta_m ^{\frac{r+1}{r+2}})$, it follows from \eqref{eq:3.6} that if $x^\dagger \neq 0$, then
\begin{equation}\label{eq:3.7}
\lim_{m \to \infty} \alpha_m \delta_m ^{-\frac{r+1}{r+2}}=0.
\end{equation}
Now, by \eqref{eq:3.8} and \eqref{eq:3.7} applied to inequality \eqref{eq:3.4.1} it follows that $0 \geq 1$,which is a contradiction. Hence $x^\dagger =0$.
\end{proof}

Note that for $r=1$ (classical Tikhonov) the previous proposition gives exactly Proposition~5.3 in \cite{engl1996regularization}. On the other hand, taking a large $r$, it is possible to overcome the saturation result of classical Tikhonov obtaining a convergence rate arbitrary close to $O(\delta)$. 

A similar saturation result can be proved also for the fractional Tikhonov regularization in Definition~\ref{def:fract}. 

\begin{proposition}[Saturation for fractional Tikhonov regularization]\label{prop:satfract}
Let $K : \X \to \Y$ be a compact linear operator with infinite dimensional range and let $R_{\alpha, \gamma}$ be the corresponding family of fractional Tikhonov regularization operators in Definition~\ref{def:fract}, with fixed $\gamma \geq 1/2$. Let $\alpha = \alpha(\delta, y^\delta)$ be any parameter choice rule. If
\begin{equation}\label{eq:3.9}
\sup \left\{ \| x_{\alpha, \gamma}^\delta - x^\dagger \| :\, \|Q( y - y^\delta) \| \leq \delta  \right\} = o(\delta^{\frac{2}{3}}),
\end{equation}
then $x^\dagger =0$, where we indicated with $Q$ the orthogonal projector onto $\overline{R(K)}$.
\end{proposition}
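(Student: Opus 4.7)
\emph{Plan.} I would follow the template of the proof of Proposition~\ref{prop:satw} for weighted Tikhonov, substituting the filter $F_{\alpha, \gamma}(\sigma) = \sigma^{2\gamma}/(\sigma^2+\alpha)^\gamma$ for $F_{\alpha, r}$ and adjusting the exponents of $\sigma_m$ so that $\delta_m^{2/3}$ plays the role of $\delta_m^{(r+1)/(r+2)}$. Specifically, I set
\[
\delta_m := \sigma_m^3, \quad y_m^\delta := y + \epsilon_m \delta_m u_m, \quad \alpha_m := \alpha(\delta_m, y_m^\delta), \quad x_m := x_{\alpha_m, \gamma}, \quad x_m^\delta := x_{\alpha_m, \gamma}^{\delta_m},
\]
where $\epsilon_m \in \{-1,+1\}$ is chosen so that $\epsilon_m \langle x^\dagger, v_m\rangle \leq 0$. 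Since $u_m \in R(K)$, we have $\|Q(y - y_m^\delta)\| = \delta_m$, and hypothesis \eqref{eq:3.9} gives $\|x_m^\delta - x^\dagger\| = o(\delta_m^{2/3}) = o(\sigma_m^2)$. By linearity, $x_m^\delta - x^\dagger = (x_m - x^\dagger) + \epsilon_m \delta_m F_{\alpha_m, \gamma}(\sigma_m)\sigma_m^{-1} v_m$. The sign choice for $\epsilon_m$ makes the cross term in the expansion of $\|x_m^\delta - x^\dagger\|^2$ non-negative; dropping it together with the non-negative term $\|x_m - x^\dagger\|^2$ and dividing by $\delta_m^{4/3} = \sigma_m^4$, I would obtain
\[
\bigl(\delta_m^{-2/3}\|x_m^\delta - x^\dagger\|\bigr)^2 \;\geq\; F_{\alpha_m, \gamma}(\sigma_m)^2 \;=\; (1+\alpha_m/\sigma_m^2)^{-2\gamma}.
\]

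The decisive step is to show $\alpha_m / \sigma_m^2 \to 0$ whenever $x^\dagger \neq 0$. I would use the operator identity $(K^*K + \alpha I)^\gamma R_{\alpha, \gamma} = (K^*K)^{\gamma-1} K^*$ (an immediate spectral verification), applied to $y_m^\delta$, together with $K^* y = K^*K x^\dagger$ and $K^* u_m = \sigma_m v_m$, to deduce
\[
\bigl[(K^*K + \alpha_m I)^\gamma - (K^*K)^\gamma\bigr] x^\dagger \;=\; (K^*K + \alpha_m I)^\gamma (x^\dagger - x_m^\delta) + \epsilon_m \delta_m \sigma_m^{2\gamma-1} v_m.
\]
Taking norms, using $\|(K^*K + \alpha_m I)^\gamma\| \leq (\sigma_1^2 + \alpha_0)^\gamma$ together with the estimate $\delta_m \sigma_m^{2\gamma - 1} = \sigma_m^{2\gamma + 2} = o(\sigma_m^2)$ (which holds because $\gamma \geq 1/2$), the right-hand side is $o(\sigma_m^2)$. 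For the left-hand side I pick any index $N$ with $\langle x^\dagger, v_N\rangle \neq 0$ (possible since $x^\dagger \neq 0$) and apply the mean value theorem to $\lambda \mapsto \lambda^\gamma$ on the interval $[\sigma_N^2, \sigma_N^2 + \alpha_m]$, giving $\|\bigl[(K^*K + \alpha_m I)^\gamma - (K^*K)^\gamma\bigr] x^\dagger\| \geq c_N \alpha_m$ for all sufficiently small $\alpha_m$, with $c_N > 0$ depending only on $x^\dagger$, $\gamma$, and $\sigma_N$. Combining these bounds yields $\alpha_m = o(\sigma_m^2)$, hence $F_{\alpha_m, \gamma}(\sigma_m) \to 1$, and substituting into the inequality of the previous paragraph gives $o(1) \geq 1 - o(1)$, a contradiction.

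The main obstacle is the spectral lower bound invoked in the second paragraph. In the weighted case the analogous operator is the tractable $\alpha I$, but here one must estimate $\bigl[(K^*K + \alpha I)^\gamma - (K^*K)^\gamma\bigr] x^\dagger$, whose spectral symbol $(\lambda + \alpha)^\gamma - \lambda^\gamma$ behaves like $\gamma \alpha \lambda^{\gamma - 1}$ for $\lambda \gg \alpha$ and like $\alpha^\gamma$ for $\lambda \ll \alpha$; in particular, no uniform lower bound of the form $c \alpha$ is available on $[0, \sigma_1^2]$ when $\gamma > 1$. I sidestep this by localizing at a single spectral component $\sigma_N^2 > 0$ where $x^\dagger$ has a non-vanishing Fourier coefficient. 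A secondary subtlety is the sign choice for $\epsilon_m$, required because the cross term in the expansion of $\|x_m^\delta - x^\dagger\|^2$ may a priori be negative and cannot be absorbed into $\|x_m - x^\dagger\|^2$.
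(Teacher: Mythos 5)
Your proof is correct, but it takes a genuinely different route from the paper's. The paper does not redo the construction of Proposition~\ref{prop:satw} for the fractional filter: it reduces the statement to the classical Tikhonov saturation result of \cite[Proposition 5.3]{engl1996regularization} via the filter comparison \eqref{eq:4.2a}--\eqref{eq:4.2b}, which gives $\|x_{\alpha,\gamma}-x^\dagger\|\ge\min\{1,\gamma\}\,\|x_{\alpha,1}-x^\dagger\|$, extends this comparison to perturbed data by a continuity argument on $\phi_\gamma$, passes to the supremum, and concludes that hypothesis \eqref{eq:3.9} forces the standard Tikhonov error to be $o(\delta^{2/3})$ as well. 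You instead transplant the direct construction of the weighted case ($\delta_m=\sigma_m^3$, perturbation along $u_m$, comparison of $\alpha_m$ with $\sigma_m^2$), and the extra work you supply --- the sign choice $\epsilon_m$ that makes the cross term disposable, and the localized mean-value lower bound $\|[(K^*K+\alpha_m I)^\gamma-(K^*K)^\gamma]x^\dagger\|\ge c_N\alpha_m$ at one spectral component with $\langle x^\dagger,v_N\rangle\neq0$ --- is exactly what is needed to replace the simple relation $\alpha_m\|x^\dagger\|=O(\delta_m+\|x^\dagger-x_m^\delta\|)$, which is available in the weighted case because there the residual operator is $\alpha I$. What each approach buys: the paper's proof is shorter and recycles a known theorem, but it hinges on comparing fractional and classical errors for noisy data, a step supported only by a continuity argument for the exact-data quantity; yours is self-contained, uniform in $\gamma\ge 1/2$, never invokes the standard-Tikhonov saturation, and the sign trick also tidies up the cross term, which in the proof of Proposition~\ref{prop:satw} is simply asserted to be harmless. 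Two cosmetic caveats: the choice $\epsilon_m\in\{-1,+1\}$ with $\epsilon_m\langle x^\dagger,v_m\rangle\le0$ is literally meaningful only over $\R$ (over $\mathbb{C}$ choose $\epsilon_m$ so that $\mathrm{Re}\,(\epsilon_m\langle x^\dagger,v_m\rangle)\le0$, which is all the cross-term argument needs), and the bound $\|(K^*K+\alpha_m I)^\gamma\|\le(\sigma_1^2+\alpha_0)^\gamma$ presumes $\alpha_m$ bounded --- harmless, since any rule choice in the sense of Definition~\ref{definition:1.1} satisfies $\alpha(\delta,y^\delta)\to 0$ as $\delta\to 0$, hence $\alpha_m\to 0$, which you need anyway for the mean-value bound ``for all sufficiently small $\alpha_m$''.
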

\begin{proof}
If $\gamma =1$, the thesis follows from the saturation result for standard Tikhonov \cite[Proposition 5.3]{engl1996regularization}.
For $\gamma \neq 1$, recalling that 
$$
x_{\alpha, \gamma} - x^\dagger  = \sum_{\sigma_m >0} \left( F_{\alpha,\gamma}(\sigma_m) -1 \right)\sigma_m^ {-1} \langle y, u_m \rangle v_m,
$$
by equations \eqref{eq:4.2a} and \eqref{eq:4.2b}, we obtain
\begin{equation}
\| x_{\alpha, \gamma} - x^\dagger \| > c \| x_{\alpha, 1} - x^\dagger \|,
\end{equation}
where $c = \min \{1, \gamma  \}$ and $x_{\alpha, 1}$ is standard Tikhonov. Let us define 
\[
\phi_\gamma (y) := \| x_{\alpha, \gamma} - x^\dagger \|.
\]
Then, by the continuity of 
 $\phi_\gamma$, there exists $\delta>0$ such that, for every $y^\delta \in \overline{B}_\delta (y)$,  we find
$$
\phi_\gamma (y^\delta) > c \cdot \phi_1 (y^\delta),
$$
with $ \overline{B}_\delta (y)$ being the closure of the ball of center $y$ and radius $\delta$.
Passing to the $\sup$ we obtain that
\begin{equation}
\sup \left\{ \| x_{\alpha, \gamma}^\delta - x^\dagger \| :\, \|Q( y - y^\delta) \| \leq \delta  \right\} \geq c \cdot \sup \left\{ \| x_{\alpha, 1}^\delta - x^\dagger \| :\, \|Q( y - y^\delta) \| \leq \delta  \right\}.
\end{equation}
Therefore, using relation \eqref{eq:3.9}, we deduce 
\begin{equation}
\sup \left\{ \| x_{\alpha, 1}^\delta - x^\dagger \| :\, \| y - y^\delta \| \leq \delta  \right\} = o(\delta^{\frac{2}{3}}),
\end{equation}
and the thesis follows again from the saturation result for standard Tikhonov, cf. \cite[Proposition~5.3]{engl1996regularization}.
\end{proof}

Differently from the weighted Tikhonov regularization, for the fractional Tikhonov method, it is not possible to overcome the saturation result of classical Tikhonov, even for a large $\gamma$.

\section{Stationary iterated regularization}\label{sec:statiter}
We define new iterated regularization methods based on weighed and fractional Tikhonov regularization using 
the same iterative refinement strategy of iterated Tikhonov regularization \cite{brill1987iterative,engl1996regularization}.
We will show that the iterated methods go beyond  the saturation results proved in the previous section.
In this section the regularization parameter will still be $\alpha$ with the iteration step, $n$, assumed to be fixed. On the contrary, in Section~\ref{sec:NSIWT}, we will analyze the nonstationary counterpart of this iterative method, in which $\alpha$ will be replaced by a pre-fixed sequence $\{ \alpha_n \}$ and we will be concerned on the rate of convergence with respect to the index $n$.  

\subsection{Iterated weighted Tikhonov regularization}
We propose now an iterated regularization method based on weighted Tikhonov

\begin{definition}[Stationary iterated weighted Tikhonov]
We define the \emph{stationary iterated weighted Tikhonov method} (\emph{SIWT}) as
\begin{equation}\label{eq:4.1}
\begin{cases}
x_{\alpha, r}^0 := 0; \\
\left( (K^* K)^{\frac{r+1}{2}} + \alpha I  \right)x_{\alpha, r}^n := (K^* K)^{\frac{r-1}{2}} K^* y + \alpha x_{\alpha, r}^{n-1},
\end{cases}
\end{equation}
with $\alpha >0$ and $r \geq 0$, or equivalently 
\begin{equation}
\begin{cases}
x_{\alpha, r}^0 := 0 \\
x_{\alpha, r}^n := \mbox{ \emph{argmin}}_{x \in X} \left\{ \| Kx - y \|_W + \alpha \|x - x_{\alpha, r}^{n-1} \|  \right\},
\end{cases}
\end{equation}
where $\| \cdot \|_W$ is the semi-norm introduced in \eqref{eq:7.1}.
We define $x_{\alpha, r}^{n, \delta}$ as the $n$-th iteration of weighted Tikhonov if $y = y^\delta$. 
\end{definition}

\begin{proposition}
For any given $n \in \mathbb{N}$ and $r>0$, the SIWT in \eqref{eq:4.1} is a filter based regularization method, with filter function
\begin{equation}
F_{\alpha, r}^{(n)} (\sigma) = \frac{(\sigma^{r+1} + \alpha)^n - \alpha^n}{(\sigma^{r+1} + \alpha)^n}.
\end{equation}
Moreover, the method is of optimal order, under the a-priori assumption $x^\dagger \in X_{\nu, \rho}$, for $r >0$ and $0 < \nu \leq n(r+1)$, with best convergence rate $\| x^\dagger - x_{\alpha, r}^{n, \delta} \| = O (\delta ^{\frac{n(r+1)}{1+n(r+1)}})$, 
that is obtained for $\alpha =  (\frac{\delta}{\rho})^{\frac{n(r+1)}{1+\nu}}$, with $\nu=n(r+1)$. 
On the other hand, if $\|x^\dagger - x_{\alpha, r}^{n} \| = O(\alpha^n)$, then $x^\dagger \in X_{n(r+1)}$.
\end{proposition}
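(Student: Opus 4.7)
The plan is to structure the proof in three parts, mirroring the three claims in the proposition.

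First, I would derive the filter expression by induction on $n$. Using the SVE identities $(K^*K)^{(r+1)/2} v_m = \sigma_m^{r+1} v_m$ and $(K^*K)^{(r-1)/2}K^* u_m = \sigma_m^{r} v_m$, I would project the recursion \eqref{eq:4.1} onto each eigenvector $v_m$. Writing $x_{\alpha,r}^n = \sum_m F^{(n)}_{\alpha,r}(\sigma_m)\sigma_m^{-1}\langle y,u_m\rangle v_m$, this yields the scalar recursion
\[
F^{(n)}_{\alpha,r}(\sigma) = \frac{\sigma^{r+1} + \alpha F^{(n-1)}_{\alpha,r}(\sigma)}{\sigma^{r+1}+\alpha},
\]
which is equivalent to $1 - F^{(n)}_{\alpha,r} = \alpha(1-F^{(n-1)}_{\alpha,r})/(\sigma^{r+1}+\alpha)$. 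A clean induction starting from $F^{(0)}\equiv 0$ then gives $1 - F^{(n)}_{\alpha,r}(\sigma) = \alpha^n/(\sigma^{r+1}+\alpha)^n$, which is the claimed formula.

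Next, to apply Theorem~\ref{proposition:2.1} I would verify the filter conditions \eqref{eq:1.3}--\eqref{eq:1.5} and the bounds \eqref{eq:2.1}--\eqref{eq:2.2} with $\beta = 1/(r+1)$. For \eqref{eq:2.1}, the Bernoulli-type estimate $F^{(n)} = 1-(1-F^{(1)}_{\alpha,r})^n \leq nF^{(1)}_{\alpha,r}$ reduces the bound to the already known weighted-Tikhonov case. For \eqref{eq:2.2}, the substitution $t = \sigma^{r+1}/\alpha$ rewrites
\[
(1-F^{(n)}_{\alpha,r}(\sigma))\sigma^\nu \;=\; \frac{\alpha^n\sigma^\nu}{(\sigma^{r+1}+\alpha)^n} \;=\; \alpha^{\nu/(r+1)}\,\frac{t^{\nu/(r+1)}}{(t+1)^n},
\]
and the factor on the right is uniformly bounded in $t\ge 0$ exactly when $\nu/(r+1)\le n$. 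This pinpoints the optimal exponent range $0<\nu\le n(r+1)$ and, via Theorem~\ref{proposition:2.1}, gives the optimal rate $O(\delta^{\nu/(\nu+1)})$ with the stated choice $\alpha=(\delta/\rho)^{(r+1)/(\nu+1)}$; maximizing in $\nu$ yields $\nu=n(r+1)$ and the rate $O(\delta^{n(r+1)/(1+n(r+1))})$.

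For the converse, I would apply Theorem~\ref{thm:con.result} with $\beta=1/(r+1)$ and $\nu=n(r+1)$. The key observation is that for $\sigma\geq \alpha^{1/(r+1)}$ one has $\sigma^{r+1}+\alpha\le 2\sigma^{r+1}$, so
\[
(1-F^{(n)}_{\alpha,r}(\sigma))\sigma^{n(r+1)} \;=\; \frac{\alpha^n\sigma^{n(r+1)}}{(\sigma^{r+1}+\alpha)^n}\;\ge\; \frac{\alpha^n}{2^n},
\]
which is exactly \eqref{eq:conv.result1}, and then the hypothesis $\|x^\dagger-x_{\alpha,r}^n\|=O(\alpha^n)=O(\alpha^{\beta\nu})$ is \eqref{eq:conv.result2}, giving $x^\dagger\in \X_{n(r+1)}$.

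No step appears genuinely hard; the only subtle point is getting the filter recursion right and identifying the precise regime $\nu\le n(r+1)$ in the boundedness of $t^{\nu/(r+1)}/(t+1)^n$, which is where the saturation barrier of plain weighted Tikhonov gets broken by the iteration.
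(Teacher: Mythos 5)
Your proposal is correct and follows essentially the same route as the paper: you obtain the same closed form for $F_{\alpha,r}^{(n)}$ (by a scalar induction rather than the paper's operator telescoping), you reduce \eqref{eq:1.3}--\eqref{eq:1.4} and \eqref{eq:2.1} to the weighted Tikhonov case via $F_{\alpha,r}^{(n)}\leq n F_{\alpha,r}$ (Bernoulli instead of the paper's geometric-sum identity), you verify \eqref{eq:2.2} exactly in the range $0<\nu\leq n(r+1)$, and you prove the converse with the same bound $\bigl(1-F_{\alpha,r}^{(n)}(\sigma)\bigr)\sigma^{n(r+1)}\geq 2^{-n}\alpha^{n}$ on $[\alpha^{1/(r+1)},\sigma_1]$ combined with Theorem~\ref{thm:con.result}. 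One minor remark: your parameter choice $\alpha=(\delta/\rho)^{(r+1)/(\nu+1)}$ is indeed what Theorem~\ref{proposition:2.1} yields with $\beta=1/(r+1)$, whereas the proposition displays the exponent $n(r+1)/(1+\nu)$; this discrepancy lies in the statement rather than in your argument and does not affect the claimed rate $O\bigl(\delta^{\frac{n(r+1)}{1+n(r+1)}}\bigr)$.
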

\begin{proof}
Multiplying both sides of \eqref{eq:4.1} by $\left( (K^* K)^{\frac{r+1}{2}} + \alpha I  \right)^{n-1}$ and iterating the process, we get
\begin{align*}
\left( (K^* K)^{\frac{r+1}{2}} + \alpha I  \right)^n x_{\alpha, r}^n &= \left\{ \sum_{j=0}^{n-1} \alpha^j \left((K^*K)^{\frac{r+1}{2}} +\alpha I \right)^{n-1-j}  \right\} (K^*K)^{\frac{r-1}{2}}K^* y\\
&= \left[  \left((K^*K)^{\frac{r+1}{2}} + \alpha I \right)^n - \alpha^n I  \right](K^*K)^{-1} K^* y.
\end{align*}
Therefore, the filter function in \eqref{eq:def_filter} is equal to
$$
F_{\alpha, r}^{(n)} (\sigma) = \frac{(\sigma^{r+1} + \alpha)^n - \alpha^n}{(\sigma^{r+1} + \alpha)^n},
$$
as we stated. Condition \eqref{eq:1.5} is straightforward to verify. Moreover, note that
\begin{align*}
 F_{\alpha, r}^{(n)}  (\sigma)  &=   \frac{(\sigma^{r+1} + \alpha)^n - \alpha^n}{(\sigma^{r+1} + \alpha)^n}  \\
&=  \frac{\sigma^{r+1}}{\sigma^{r+1}+\alpha}  \cdot  \frac{ \left( \sum_{j=0}^{n-1} \alpha^{j}(\sigma^{r+1} + \alpha)^{n-1-j}  \right)}{(\sigma^{r+1} + \alpha)^{n-1}}      \\
&=  F_{\alpha, r}(\sigma) \cdot \left( 1 + \left( \frac{\alpha}{\sigma^{r+1}+\alpha} \right) + \cdots + \left( \frac{\alpha}{\sigma^{r+1}+\alpha}  \right)^{n-1} \right),
\end{align*}
from which it follows that
\begin{equation}
F_{\alpha, r}(\sigma) \leq F_{\alpha, r}^{(n)}  (\sigma) \leq n F_{\alpha, r}(\sigma).
\end{equation}
Therefore, conditions \eqref{eq:1.3}, \eqref{eq:1.4} and \eqref{eq:2.1} follows immediately by the regularity of the weighted Tikhonov filter method for $r>0$ and by the order optimality for $r >0$. Finally, condition \eqref{eq:2.2} becomes
$$
\sup_{\sigma \in [0,\sigma_1]} \left| \frac{\alpha^n \sigma^\nu}{(\sigma^{r+1}+\alpha)^n}  \right|,
$$
and deriving one checks that it is bounded by $\alpha ^{\beta \nu}$, with $\beta = 1/(r+1)$, if and only if $0 < \nu \leq n(r+1)$. Applying now Proposition \ref{proposition:2.1} the rest of the thesis follows.

On the contrary, if we define $\beta = 1/(r+1)$ and $\nu = n(r+1)$, then we deduce that
$$
\left( 1 - F_{\alpha, r}^{(n)} (\sigma)  \right)\sigma^\nu  =  \frac{\alpha^n \sigma^\nu}{(\sigma^{r+1}+\alpha)^n} \geq \frac{1}{2^n} \alpha^n \qquad \mbox{for } \sigma \in [\alpha^\beta, \sigma_1].
$$
Therefore, if $\|x^\dagger - x_{\alpha, r}^{n} \| = O(\alpha^n)$, then by Theorem \ref{thm:con.result} it follows that $x^\dagger \in X_{n(r+1)}$.
\end{proof}

 If $n$ is large, then we note that the convergence rate approaches $O(\delta)$ also for a fixed small $r$. 
The study of the convergence for increasing  $n$ and fixed $\alpha$ will be dealt with in Section \ref{sec:NSIWT}. 

\subsection{Iterated fractional Tikhonov regularization}  
With the same path as in the previous subsection, we propose here the stationary iterated version of the fractional Tikhonov method.

\begin{definition}[Stationary iterated fractional Tikhonov]
We define the \emph{stationary iterated fractional Tikhonov method} (\emph{SIFT}) as
\begin{equation}\label{eq:4.3}
\begin{cases}
x_{\alpha, \gamma}^0 := 0; \\
\left( K^* K + \alpha I  \right)^\gamma x_{\alpha, \gamma}^n := (K^* K)^{\gamma -1} K^* y + \left[\left( K^*K + \alpha I \right)^\gamma - \left( K^*K \right)^\gamma  \right] x_{\alpha, \gamma}^{n-1},
\end{cases}
\end{equation}
with $\gamma \geq 1/2$.
We define $x_{\alpha, \gamma}^{n, \delta}$ for the $n$-th iteration of fractional Tikhonov if $y = y^\delta$. 
\end{definition}

\begin{proposition}\label{it-frac}
For any given $n \in \mathbb{N}$ and $\gamma \geq 1/2$, the SIFT in \eqref{eq:4.3} is a filter based regularization method, with filter function
\begin{equation}\label{eq:5.1}
F_{\alpha, \gamma}^{(n)}  (\sigma) = \frac{\left(\sigma^{2} + \alpha \right)^{\gamma n} - \left[ \left( \sigma^2 + \alpha \right)^\gamma - \sigma^{2\gamma} \right]^n}{\left(\sigma^{2} + \alpha\right)^{\gamma n}}.
\end{equation}
Moreover, the method is of optimal order, under the a-priori assumption $x^\dagger \in X_{\nu, \rho}$, for $\gamma \geq 1/2$ and $0 < \nu \leq 2n$, with best convergence rate $\| x^\dagger - x_{\alpha, \gamma}^{n, \delta} \| = O (\delta ^{\frac{2n}{2n +1}})$, 
that is obtained for $\alpha =  (\frac{\delta}{\rho})^{\frac{2n}{\nu+1}}$, with $\nu=2n$. 
On the other hand, if $\|x^\dagger - x_{\alpha, \gamma}^{n} \| = O(\alpha^n)$, then $x^\dagger \in X_{2n}$.
\end{proposition}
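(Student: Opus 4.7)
The plan is to mirror the structure of the SIWT proof: first derive the closed-form filter function by unrolling the recurrence \eqref{eq:4.3} in the singular basis, then verify the three filter conditions plus the two order-optimality conditions of Theorem~\ref{proposition:2.1}, and finally invoke the converse Theorem~\ref{thm:con.result} for the source-condition implication. The central observation that will make everything clean is the identity
\[
1 - F^{(n)}_{\alpha,\gamma}(\sigma) \;=\; \bigl(1 - F_{\alpha,\gamma}(\sigma)\bigr)^n,
\]
from which \eqref{eq:5.1} follows by rewriting $1 - F_{\alpha,\gamma}(\sigma) = \bigl((\sigma^2+\alpha)^\gamma - \sigma^{2\gamma}\bigr)/(\sigma^2+\alpha)^\gamma$.

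To establish this identity, I project \eqref{eq:4.3} onto the singular system using \eqref{eq:1.2}. Writing $\hat{x}^n_m := \langle x^n_{\alpha,\gamma}, v_m\rangle$, the recurrence becomes the scalar one
\[
(\sigma_m^2 + \alpha)^\gamma\, \hat{x}^n_m = \sigma_m^{2\gamma-1}\langle y, u_m\rangle + \bigl[(\sigma_m^2 + \alpha)^\gamma - \sigma_m^{2\gamma}\bigr]\, \hat{x}^{n-1}_m,
\]
i.e. $\hat{x}^n_m = p_m + q_m\, \hat{x}^{n-1}_m$ with $q_m = 1 - F_{\alpha,\gamma}(\sigma_m)$ and $p_m = \sigma_m^{2\gamma-1}\langle y,u_m\rangle/(\sigma_m^2+\alpha)^\gamma$. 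Iterating from $\hat{x}^0_m = 0$ gives the geometric sum $\hat{x}^n_m = p_m(1-q_m^n)/(1-q_m) = \sigma_m^{-1}\langle y,u_m\rangle(1-q_m^n)$, which is precisely \eqref{eq:5.1}. The factorization $F^{(n)}_{\alpha,\gamma} = F_{\alpha,\gamma}\sum_{j=0}^{n-1}(1-F_{\alpha,\gamma})^j$ then yields $F_{\alpha,\gamma} \leq F^{(n)}_{\alpha,\gamma} \leq n F_{\alpha,\gamma}$, so conditions \eqref{eq:1.3}, \eqref{eq:1.4}, and \eqref{eq:2.1} (with $\beta = 1/2$) are inherited from the fractional Tikhonov filter, and \eqref{eq:1.5} is immediate since $q_m \to 0$ as $\alpha\to 0$.

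The key inequality for \eqref{eq:2.2} follows from the bounds \eqref{eq:4.2a}-\eqref{eq:4.2b} already proved for $F_{\alpha,\gamma}$. Taking $n$-th powers gives
\[
\bigl(1-F^{(n)}_{\alpha,\gamma}(\sigma)\bigr)\sigma^\nu \;\leq\; \max\{1,\gamma\}^n\,\frac{\alpha^n\sigma^\nu}{(\sigma^2+\alpha)^n}.
\]
Substituting $s = \sigma^2/\alpha$ turns the right-hand side into $\max\{1,\gamma\}^n\,\alpha^{\nu/2}\, s^{\nu/2}/(s+1)^n$, which is uniformly bounded in $s\geq 0$ precisely when $\nu/2 \leq n$. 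Hence \eqref{eq:2.2} holds with $\beta = 1/2$ for $0<\nu\leq 2n$, and Theorem~\ref{proposition:2.1} delivers optimal order, with the largest exponent $\nu = 2n$ giving the best rate $O(\delta^{2n/(2n+1)})$.

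For the converse, the lower bound in \eqref{eq:4.2a}-\eqref{eq:4.2b} gives $1 - F_{\alpha,\gamma}(\sigma) \geq \min\{1,\gamma\}\cdot \alpha/(\sigma^2+\alpha)$, so
\[
\bigl(1-F^{(n)}_{\alpha,\gamma}(\sigma)\bigr)\sigma^{2n} \;\geq\; \min\{1,\gamma\}^n\,\frac{\alpha^n\sigma^{2n}}{(\sigma^2+\alpha)^n} \;\geq\; \frac{\min\{1,\gamma\}^n}{2^n}\,\alpha^n
\quad\text{for }\sigma\in[\alpha^{1/2},\sigma_1],
\]
using $\sigma^2+\alpha \leq 2\sigma^2$ on that range. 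Combined with the hypothesis $\|x^\dagger - x^n_{\alpha,\gamma}\| = O(\alpha^n)$, Theorem~\ref{thm:con.result} with $\beta = 1/2$ and $\nu = 2n$ yields $x^\dagger \in \X_{2n}$. The only delicate step is the telescoping of the recurrence into the simple form $1 - q_m^n$; once the identity $1 - F^{(n)}_{\alpha,\gamma} = (1-F_{\alpha,\gamma})^n$ is in hand, the rest is a direct transposition of the fractional Tikhonov estimates already established in Section~\ref{sec:fractional}.
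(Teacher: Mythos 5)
Your proof is correct and follows essentially the same route as the paper: unrolling the recurrence to get $1-F^{(n)}_{\alpha,\gamma}=(1-F_{\alpha,\gamma})^n$, inheriting conditions \eqref{eq:1.3}--\eqref{eq:1.5} and \eqref{eq:2.1} from $F_{\alpha,\gamma}\le F^{(n)}_{\alpha,\gamma}\le nF_{\alpha,\gamma}$, using \eqref{eq:4.2a}--\eqref{eq:4.2b} raised to the $n$-th power for \eqref{eq:2.2}, and invoking Theorem~\ref{thm:con.result} for the converse. The only cosmetic differences are that you derive the filter spectrally instead of at the operator level and verify the bound $\alpha^n\sigma^\nu/(\sigma^2+\alpha)^n\le c\,\alpha^{\nu/2}$ directly by the substitution $s=\sigma^2/\alpha$ rather than citing the known stationary iterated Tikhonov estimate, which makes your argument slightly more self-contained.
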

\begin{proof}
Multiplying both sides of \eqref{eq:5.1} by $\left( K^* K + \alpha I  \right)^{(n-1)\gamma}$ and iterating the process, we get
\begin{align*}
\left( K^* K + \alpha I  \right)^{n\gamma} x_{\alpha, \gamma}^n &= \left\{ \sum_{j=0}^{n-1}  \left(K^*K +\alpha I \right)^{j\gamma} \left[ \left( K^*K + \alpha I \right)^\gamma - \left(K^*K \right)^\gamma \right]^{n-1 - j} \right\} (K^*K)^{\gamma -1}K^* y\\
&= \left\{ \left(K^*K + \alpha I \right)^{\gamma n} - \left[\left( K^*K + \alpha I \right)^\gamma - \left(K^*K \right)^\gamma  \right]^n \right\} (K^*K)^{-1} K^* y,
\end{align*}
where we used the fact that $ \left(K^*K +\alpha I \right)^{-\gamma}$ and $\left[ \left( K^*K + \alpha I \right)^\gamma - \left(K^*K \right)^\gamma \right]$ commute. 
Therefore, the filter function in \eqref{eq:def_filter} is given by
$$
F_{\alpha, \gamma}^n (\sigma) = \frac{(\sigma^{2} + \alpha)^{\gamma n} - \left[ \left(\sigma^2 + \alpha  \right)^\gamma - \sigma^{2\gamma} \right]^n}{\left(\sigma^{2} + \alpha \right)^{\gamma n}},
$$
as we stated.
We observe that
\begin{align}
F_{\alpha, \gamma}^{(n)}  (\sigma) &=  \frac{(\sigma^{2} + \alpha)^{\gamma n} - \left[ \left(\sigma^2 + \alpha  \right)^\gamma - \sigma^{2\gamma} \right]^n}{\left(\sigma^{2} + \alpha \right)^{\gamma n}} \nonumber \\
							  &= \frac{\sigma^{2\gamma}}{(\sigma^2 + \alpha)^\gamma} \cdot \frac{1}{(\sigma^2 + \alpha)^{\gamma(n-1)}} \cdot \sum_{j=0}^{n-1} (\sigma^2 + \alpha)^{\gamma j} \left[ (\sigma^2 + \alpha)^\gamma - \sigma^{2\gamma}\right]^{n-1-j} \nonumber \\
							  &= \frac{\sigma^{2\gamma}}{(\sigma^2 + \alpha)^\gamma} \cdot \left\{ 1 + \left[1 - \left(\frac{\sigma^{2}}{\sigma^2 + \alpha}\right)^\gamma  \right] + \cdots + \left[1 - \left(\frac{\sigma^{2}}{\sigma^2 + \alpha}\right)^\gamma  \right]^{n-1}  \right\}, \nonumber						
\end{align}
from which we deduce that 
\begin{equation} 
 F_{\alpha, \gamma}^{(n)}  (\sigma) \leq n F_{\alpha, \gamma}(\sigma).
\end{equation}
Therefore, since $F_{\alpha,\gamma}$ is a regularization method of optimal order, conditions \eqref{eq:1.3}, \eqref{eq:1.4} and \eqref{eq:2.1} are satisfied. Moreover, it is easy to check condition \eqref{eq:1.5} and so we get the regularity for the method. It remains to check condition \eqref{eq:2.2} for the order optimality.

From equations \eqref{eq:4.2a} and \eqref{eq:4.2b} we deduce that
\begin{align}
1 - F_{\alpha, \gamma}^{(n)}  (\sigma) &= \left[\frac{(\sigma^2 + \alpha)^\gamma - \sigma^{2\gamma}}{(\sigma^2 + \alpha)^\gamma} \right]^n \nonumber \\
&= \left[ 1 - \frac{\sigma^{2\gamma}}{(\sigma^2 + \alpha)^\gamma} \right]^n \nonumber \\
&= \left( 1 - F_{\alpha, \gamma}(\sigma) \right)^n    \label{eq:5.3}\\
&\leq \left(\max \{1, \gamma\} \right)^n  \left( 1- F_{\alpha, 1}(\sigma) \right)^n \nonumber\\
&= c \left(1 - F_{\alpha, 1}^n(\sigma)  \right), \nonumber
\end{align}
where $F_{\alpha,1}(\sigma)$ is the standard Tikhonov filter and $F_{\alpha,1}^{(n)}  (\sigma)$ is the filter function of the stationary iterated Tikhonov, i.e., $F_{\alpha, 1}^{(n)}  (\sigma) = \frac{(\sigma^2 + \alpha)^n - \alpha^n}{(\sigma^2 + \alpha)^n}$. Now condition \eqref{eq:2.2} follows from the properties of stationary iterated Tikhonov, with $\beta = 1/2$ and $0 < \nu \leq 2n$, see \cite[p. 124]{hankehansen1993}.  By applying  Proposition~\ref{proposition:2.1} we get the best convergence rate, $O (\delta ^{\frac{2n}{2n +1}})$.

On the contrary, set $\beta = 1/2$ and $\nu = 2n$. First, let us observe that from equations \eqref{eq:5.3} and \eqref{eq:4.2a}, \eqref{eq:4.2b}, we infer that
$$
1 - F_{\alpha, \gamma}^{(n)} (\sigma) \geq \left(\min\{ 1, \gamma \}\right)^n \left(1 - F_{\alpha, 1}^{(n)}  (\sigma)\right).
$$
Then, we deduce that
\begin{align*}
\left(1 - F_{\alpha, \gamma}^{(n)}  (\sigma)  \right) \sigma^\nu &\geq c \frac{\alpha^n \sigma^{2n}}{(\sigma^2 + \alpha)^n}\\
&\geq c \alpha^n \qquad \mbox{for } \sigma \in [\alpha^\beta, \sigma_1].
\end{align*}
Therefore, if $\| x^\dagger - x_{\alpha, \gamma}^n \| = O(\alpha^n)$, then $x^\dagger \in X_{2n}$ by Theorem \ref{thm:con.result}.
\end{proof}

The previous proposition shows that, similarly to SIWT, a large $n$ allows to overcome the saturation result in Proposition~\ref{prop:satfract}.
The study of the convergence for increasing  $n$ and fixed $\alpha$ will be dealt with in Section \ref{sec:NSIFT}. 

\section{Nonstationary iterated weighted Tikhonov regularization}\label{sec:NSIWT} 
We introduce a nonstationary version of the iteration~\eqref{eq:4.1}.
We study the convergence and we prove that the new iteration is a regularization method.  

\begin{definition}
Let $\{ \alpha_n \}_{n\in \N}, \{ r_n \}_{n \in \N} \subset \R_{>0}$ be sequences of positive real numbers. We define a nonstationary iterated weighted Tikhonov method (NSIWT) as follows
\begin{equation}\label{eq:6.1}
\begin{cases}
x_{\alpha_0, r_0}^0 := 0, \\
\left[ \left( K^* K \right)^{\frac{r_n +1}{2}} + \alpha_n I\right] x_{\alpha_n, r_n}^n := \left( K^* K \right)^{\frac{r_n -1}{2}} K^* y + \alpha_n x_{\alpha_{n-1}, r_{n-1}}^{n-1},
\end{cases}
\end{equation}
or equivalently 
\begin{equation}\label{eq:6.2}
\begin{cases}
x_{\alpha_0, r_0}^0 := 0, \\
x_{\alpha_n, r_n}^n := \mbox{ \emph{argmin}}_{x \in X} \left\{ \| Kx - y \|_{W_{n}} + \alpha_n \|x - x_{\alpha_{n-1}, r_{n-1}}^{n-1} \|  \right\},
\end{cases}
\end{equation}
where $\| \cdot \|_{W_{n}}$ is the semi-norm introduced by the operator $W_n:= \left( K K^* \right)^{\frac{r_n-1}{2}}$ and depending on $n$, due to the non stationary character of $r_n$. 
\end{definition}

\subsection{Convergence analysis}
We are concerned about the properties of the sequence $\{ \alpha_n\}$ such that the iteration \eqref{eq:6.1} shall converge. 
To this aim we need some preliminary lemmas, whose proof can be found in the appendix. 

\begin{remark}
Hereafter, without loss of generality, we will consider $\sigma_1 = 1$, namely $\| K\| =1$.
\end{remark}

\begin{lemma}\label{lem:6.1}
Let $\{t_n \}_{n\in \N}$ be a sequence of real numbers such that $0 \leq t_n <1$ for every $n$. Then
\begin{equation}
\prod_{n=1}^\infty (1- t_n) >0 \qquad \mbox{if and only if} \qquad \sum_{n=1}^{\infty} t_n < \infty.
\end{equation}
\end{lemma}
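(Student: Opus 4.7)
The plan is to reduce the infinite product to an infinite series via the logarithm and then compare to $\sum t_n$ using the two-sided estimate
\[
e^{-2t}\;\leq\;1-t\;\leq\;e^{-t}\qquad (0\le t\le 1/2),
\]
whose right inequality holds on all of $[0,1)$ and whose left inequality follows from the standard series expansion $-\log(1-t)=\sum_{k\ge 1} t^k/k \le t/(1-t)\le 2t$ on $[0,1/2]$. This is a classical infinite-product lemma; the real work is just to deploy these two bounds correctly.

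For the direction $\prod_{n=1}^{\infty}(1-t_n)>0\Rightarrow\sum_{n=1}^{\infty} t_n<\infty$, I would argue by contrapositive. Using $1-t_n\le e^{-t_n}$ for every $n$, the partial products satisfy
\[
\prod_{n=1}^{N}(1-t_n)\;\le\;\exp\!\left(-\sum_{n=1}^{N} t_n\right).
\]
If $\sum t_n=\infty$, the right-hand side tends to $0$ as $N\to\infty$, hence $\prod_{n=1}^{\infty}(1-t_n)=0$, contradicting the hypothesis.

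For the converse $\sum_{n=1}^{\infty}t_n<\infty\Rightarrow\prod_{n=1}^{\infty}(1-t_n)>0$, convergence of $\sum t_n$ forces $t_n\to 0$, so there exists $N_0$ with $t_n\le 1/2$ for every $n\ge N_0$. For such $n$ the left inequality gives $1-t_n\ge e^{-2t_n}$, so
\[
\prod_{n=N_0}^{N}(1-t_n)\;\ge\;\exp\!\left(-2\sum_{n=N_0}^{N} t_n\right)\;\ge\;\exp\!\left(-2\sum_{n=N_0}^{\infty} t_n\right)>0.
\]
The remaining finite product $\prod_{n=1}^{N_0-1}(1-t_n)$ is strictly positive because each factor is in $(0,1]$ by the standing assumption $0\le t_n<1$. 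Multiplying yields a uniform positive lower bound on the partial products, so the infinite product is positive.

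The only subtle point is remembering that the hypothesis $0\le t_n<1$ is essential to start the argument in the converse direction (each factor must be nonzero), while the tail estimate is what actually does the work; everything else is a routine application of the two elementary exponential bounds.
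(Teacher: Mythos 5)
Your proof is correct. The paper itself does not argue the lemma at all: it simply cites Theorem 15.5 of Rudin's \emph{Real and Complex Analysis}, which is exactly this statement about products $\prod(1-u_n)$ with $0\le u_n<1$. What you provide is therefore a self-contained elementary replacement for that citation. Both directions of your argument are sound: the contrapositive via $1-t\le e^{-t}$ gives $\prod_{n=1}^N(1-t_n)\le\exp\bigl(-\sum_{n=1}^N t_n\bigr)\to 0$ when the series diverges, and in the converse direction the bound $-\log(1-t)\le t/(1-t)\le 2t$ on $[0,1/2]$ correctly yields $1-t_n\ge e^{-2t_n}$ for the tail, while the hypothesis $t_n<1$ keeps the finitely many head factors strictly positive; since the partial products are nonincreasing and bounded below by a positive constant, the infinite product is positive. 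Compared with the textbook route (Rudin bounds the partial products directly, using estimates such as $1-u\ge e^{-u/(1-u)}$ or comparisons between $\prod(1-u_n)$ and $1-\sum u_n$ on blocks), your exponential two-sided estimate is essentially the same mechanism packaged slightly differently; what your version buys is that the lemma becomes independent of an external reference, at the cost of a few lines of routine calculus that the paper chose to outsource.
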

\begin{proof}
See \cite[Theorem 15.5]{rudin1987real}
\end{proof}

\begin{lemma}\label{lem:6.3}
Let $\{ t_k \}_{k \in \N}$ be a sequence of positive real numbers and let $N>0$. Then 
$$
\sum_{k=1}^n t_k \sim c \sum_{k=N}^n t_k,
$$
with $c>0$ (in particular, $c=1$ when $\sum_{k=N}^\infty t_k = \sum_{k=1}^\infty t_k =\infty$).
\end{lemma}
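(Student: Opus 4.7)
The plan is to exploit the obvious observation that the two partial sums differ only by the finite initial segment $\sum_{k=1}^{N-1} t_k$, which is a fixed constant, and then take ratios.

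First I would set $S := \sum_{k=1}^{N-1} t_k$, a finite positive constant depending only on $N$, and $T_n := \sum_{k=N}^{n} t_k$. The identity $\sum_{k=1}^{n} t_k = S + T_n$ immediately gives
\[
\frac{\sum_{k=1}^{n} t_k}{\sum_{k=N}^{n} t_k} \;=\; 1 + \frac{S}{T_n},
\]
so the entire statement reduces to analyzing the limit of $S/T_n$ as $n \to \infty$.

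Next I would split into the two natural cases based on the convergence of $\sum t_k$. If $\sum_{k=N}^{\infty} t_k = \infty$, then $T_n \to \infty$, hence $S/T_n \to 0$ and the ratio converges to $1$, which matches the parenthetical assertion $c=1$ (and also, in this divergent regime, $\sum_{k=1}^{\infty} t_k = S + \sum_{k=N}^{\infty} t_k = \infty$ as well, consistent with the hypothesis stated in parenthesis). If instead $T_n \to T_\infty \in (0,\infty)$ (positive because each $t_k > 0$ and, for $N=1$, $T_\infty \geq t_1 > 0$; for $N > 1$ one picks $n \geq N$ so that $T_n \geq t_N > 0$), then
\[
\frac{\sum_{k=1}^{n} t_k}{\sum_{k=N}^{n} t_k} \;\longrightarrow\; 1 + \frac{S}{T_\infty} \;=:\; c \;>\; 0,
\]
which gives the asymptotic equivalence with a positive constant.

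There is no genuine obstacle here; the lemma is essentially the elementary fact that a convergent/divergent tail determines the asymptotics of partial sums up to an additive constant. The only subtlety worth flagging explicitly is that all $t_k > 0$ guarantees $T_n > 0$ for all $n \geq N$, so the ratio is well defined, and that $c$ is strictly positive in both cases (either $c=1$ in the divergent case or $c = 1 + S/T_\infty \geq 1$ in the convergent case).
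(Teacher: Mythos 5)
Your proof is correct, and it is slightly more direct than the paper's. You use the additive decomposition $\sum_{k=1}^{n} t_k = S + T_n$ with $S=\sum_{k=1}^{N-1}t_k$ fixed, so the ratio is exactly $1+S/T_n$ and the limit is read off at once: $c=1$ when the tail diverges, $c=1+S/T_\infty$ when it converges (and you even identify $c$ explicitly). The paper instead first invokes the asymptotic comparison test to note that both series converge or diverge together, dispatches the convergent case as trivial, and in the divergent case shows that $A_n/B_n$ (with $A_n=\sum_{k=N}^n t_k$, $B_n=\sum_{k=1}^n t_k$) is monotone increasing with supremum $1$, via the auxiliary function $h_n(x)=(A_n+x)/(B_n+x)$. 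Your route makes that monotonicity argument unnecessary, at the cost of nothing; the only cosmetic slip is calling $S$ a \emph{positive} constant, since for $N=1$ the initial segment is an empty sum and $S=0$, but then the ratio is identically $1$ and the conclusion holds a fortiori, so the argument is unaffected.
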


\begin{lemma}\label{lem:6.2}
For every $\lambda \in (0, \infty)$ and for every sequence $\{ t_k \}_{k \in \N} \subset (0, \infty)$ such that $\lim_{k\to \infty}t_k =  t \in (0, \infty]$, we find
$$
\sum_{k=1}^n \frac{1}{t_k} \, \sim \, c_\lambda  \sum_{k=1}^n \frac{\lambda}{\lambda + t_k}, \qquad c_\lambda >0, 
$$
where $\sim$ denotes the asymptotic equivalence.
\end{lemma}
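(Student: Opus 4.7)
The starting observation I would use is the pointwise identity
$$
\frac{1/t_k}{\lambda/(\lambda + t_k)} \;=\; \frac{\lambda + t_k}{\lambda\, t_k} \;=\; \frac{1}{\lambda} + \frac{1}{t_k},
$$
which converges, as $k\to\infty$, to the strictly positive and finite number $L:=\frac{1}{\lambda}+\frac{1}{t}$ (with the usual convention $\frac{1}{\infty}=0$). This identity transfers the convergence of $\{t_k\}$ into a uniform comparison between the two summands.

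From here I would split into two cases according to whether the partial sums $b_n:=\sum_{k=1}^n \lambda/(\lambda+t_k)$ diverge or converge. If $b_n\to\infty$, then since $b_n$ is strictly increasing and unbounded I would invoke the Stolz--Ces\`aro theorem with $a_n:=\sum_{k=1}^n 1/t_k$: the difference quotient $(a_{n+1}-a_n)/(b_{n+1}-b_n)$ equals $\frac{1}{\lambda}+\frac{1}{t_{n+1}}$, which tends to $L$, so $a_n/b_n\to L$ and the statement holds with $c_\lambda=L>0$. If instead $b_n$ converges to some finite $B>0$, then necessarily $\lambda/(\lambda+t_k)\to 0$, forcing $t_k\to\infty$; using the same identity together with the boundedness of $1/\lambda+1/t_k$ I would bound $1/t_k$ by a constant multiple of $\lambda/(\lambda+t_k)$, concluding that $\sum 1/t_k$ also converges to some $A>0$, so that $a_n/b_n\to A/B>0$ and the conclusion holds with $c_\lambda=A/B$.

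The only delicate step is the application of Stolz--Ces\`aro, which requires $b_n$ to be strictly increasing and unbounded; the former is automatic from positivity of the terms, and the latter is exactly the case distinction I handle. Lemma~\ref{lem:6.3} is not strictly needed in this approach, but it would provide an alternative route: one could fix $N$ so that $|1/\lambda+1/t_k - L|<\varepsilon$ for $k\geq N$, sandwich the tail sums of $\sum 1/t_k$ between $(L\pm\varepsilon)$ times the tail sums of $\sum \lambda/(\lambda+t_k)$, and then use Lemma~\ref{lem:6.3} to pass from the tails back to the full partial sums without changing the asymptotic equivalence. I expect the main subtlety to be simply being explicit that $c_\lambda$ is genuinely positive in the convergent case, where it depends on the whole sequence, rather than only on $\lambda$ and~$t$.
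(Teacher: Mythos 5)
Your proof is correct, and it starts from exactly the same observation as the paper: the termwise identity giving $\frac{1}{t_k}\sim\left(\frac{1}{\lambda}+\frac{1}{t}\right)\frac{\lambda}{\lambda+t_k}$, followed by the same dichotomy between convergent and divergent partial sums (in the convergent case both you and the paper settle the claim by taking the ratio of the finite limits, with the constant then depending on the whole sequence, which is harmless for how the lemma is used). The only real difference lies in the divergent case: you invoke Stolz--Ces\`aro as a black box, whose difference quotient is precisely $\frac{1}{\lambda}+\frac{1}{t_{n+1}}\to\frac{1}{\lambda}+\frac{1}{t}$, whereas the paper (after reducing WLOG to $\lambda=1$) re-derives this implication by hand: it proves the upper bound $X_n<1+\frac{1}{t}+\epsilon$ by splitting off the first $N_\epsilon^1$ terms and using divergence of the denominator, and the lower bound $X_n>1+\frac{1}{t}-\epsilon$ by appealing to Lemma~\ref{lem:6.3} to show the tail of the denominator carries asymptotically all of its mass. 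So the paper's argument is essentially a self-contained proof of the special case of Stolz--Ces\`aro you need, with Lemma~\ref{lem:6.3} playing the role of the tail-versus-full-sum control; your version buys brevity by citing the general theorem, and your sketched alternative via Lemma~\ref{lem:6.3} is in fact the route the paper takes. Both are complete; just make sure, if you keep the Stolz--Ces\`aro version, to state explicitly that the limit $L=\frac{1}{\lambda}+\frac{1}{t}$ is finite and strictly positive (which you do), since that is what converts the limit of the ratio into the asserted asymptotic equivalence.
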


We can now prove a necessary and sufficient condition on the sequence $\{\alpha_n\}$ to have the convergence of NSIWT.

\begin{theorem}\label{thm:6.1}
The \emph{NSIWT} method \eqref{eq:6.1} converges to $x^\dagger\in\X$ 
as $n \to \infty$ if and only if $\sum_{k=1}^n \frac{\sigma^{r_k +1}}{\sigma^{r_k +1} + \alpha_k}$ diverges for every $\sigma >0$.
\end{theorem}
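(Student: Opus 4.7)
The plan is to diagonalize the iteration in the singular-value basis, reduce convergence to a pointwise statement about the residual filter at each singular value, and then invoke Lemma~\ref{lem:6.1}. Since $y=Kx^\dagger$ in the noise-free setting, we have $(K^*K)^{(r_n-1)/2}K^*y=(K^*K)^{(r_n+1)/2}x^\dagger$, so subtracting $[(K^*K)^{(r_n+1)/2}+\alpha_nI]x^\dagger$ from both sides of \eqref{eq:6.1} produces the error recursion
\[
[(K^*K)^{(r_n+1)/2}+\alpha_n I]\bigl(x_{\alpha_n,r_n}^n-x^\dagger\bigr)=\alpha_n\bigl(x_{\alpha_{n-1},r_{n-1}}^{n-1}-x^\dagger\bigr).
\]
Projecting onto each $v_m$ and iterating from $x_{\alpha_0,r_0}^0=0$ gives the closed form
\[
\langle x_{\alpha_n,r_n}^n-x^\dagger,v_m\rangle=-\prod_{k=1}^n\frac{\alpha_k}{\sigma_m^{r_k+1}+\alpha_k}\,\langle x^\dagger,v_m\rangle,
\]
so Parseval (using that both $x_{\alpha_n,r_n}^n$ and $x^\dagger=K^\dagger y$ lie in $\overline{R(K^*)}$) yields
\[
\|x_{\alpha_n,r_n}^n-x^\dagger\|^2=\sum_{m:\,\sigma_m>0}\Bigl[\prod_{k=1}^n\frac{\alpha_k}{\sigma_m^{r_k+1}+\alpha_k}\Bigr]^2|\langle x^\dagger,v_m\rangle|^2.
\]

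Set $t_k(\sigma):=\sigma^{r_k+1}/(\sigma^{r_k+1}+\alpha_k)\in[0,1)$, so that $1-t_k(\sigma)=\alpha_k/(\sigma^{r_k+1}+\alpha_k)$. Lemma~\ref{lem:6.1} gives $\prod_{k=1}^\infty(1-t_k(\sigma))=0$ exactly when $\sum_k t_k(\sigma)=\infty$. For the sufficiency direction, assume the series diverges for every $\sigma>0$. Then at each singular value $\sigma_m>0$ the $m$-th summand in the Parseval identity tends to zero as $n\to\infty$; since each summand is monotonically decreasing in $n$ and dominated by the summable sequence $|\langle x^\dagger,v_m\rangle|^2$, dominated convergence for series yields $\|x_{\alpha_n,r_n}^n-x^\dagger\|\to 0$.

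For the necessity direction, I would argue by contraposition: suppose $\sum_k t_k(\sigma^*)<\infty$ for some $\sigma^*>0$. Because $u\mapsto u/(u+\alpha_k)$ and $\sigma\mapsto\sigma^{r_k+1}$ are both increasing on $(0,\infty)$, the function $\sigma\mapsto t_k(\sigma)$ is increasing, so the series also converges for every $\sigma\in(0,\sigma^*]$. Since $K$ has infinite-dimensional range, $\sigma_m\to 0$, and hence some singular value $\sigma_{m_0}$ lies in $(0,\sigma^*]$; Lemma~\ref{lem:6.1} then gives $\prod_{k=1}^\infty(1-t_k(\sigma_{m_0}))=c>0$. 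Choosing $x^\dagger=v_{m_0}$, which is attainable via $y=\sigma_{m_0}u_{m_0}$, produces an error whose $m_0$-th mode stays bounded away from zero, contradicting convergence.

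The main obstacle is passing from pointwise convergence of the residual filter at each singular value to convergence of the squared error in norm; this is handled by the monotonicity in $n$ combined with the summable envelope $|\langle x^\dagger,v_m\rangle|^2$, through dominated convergence. A secondary subtlety arises in the necessity direction, where one must exploit $\sigma_m\to 0$ (which uses the infinite-dimensional range of $K$) together with the monotonicity of $t_k$ in $\sigma$ in order to translate a single problematic $\sigma^*>0$ into an actual singular value where a specific $x^\dagger$ can be engineered to defeat convergence.
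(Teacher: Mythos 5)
Your proof is correct and follows essentially the same route as the paper: derive the closed-form error representation $x^\dagger-x^n=\prod_{k=1}^n\alpha_k[(K^*K)^{(r_k+1)/2}+\alpha_k I]^{-1}x^\dagger$, pass to the spectral representation of the norm, apply dominated convergence, and conclude with Lemma~\ref{lem:6.1}; your Parseval sum over the singular system is just the discrete form of the paper's spectral integral $\int_{\sigma(K^*K)}|\prod_k\alpha_k/(\sigma^{r_k+1}+\alpha_k)|^2\,d\langle E_{\sigma^2}x^\dagger,x^\dagger\rangle$. Your necessity argument (monotonicity of $t_k$ in $\sigma$, $\sigma_m\to0$, and the test element $x^\dagger=v_{m_0}$) is merely a more explicit rendering of the paper's passage from ``a.e.\ $\sigma^2$'' to ``every $\sigma>0$'', so no substantive difference.
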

\begin{proof}
Rewriting equation \eqref{eq:6.1} and reminding that $y= Kx^\dagger$, we have
\begin{align*}
x_{\alpha_n, r_n}^n
&= \left[ \left( K^* K \right)^{\frac{r_n +1}{2}}  + \alpha_n I \right]^{-1} \left( K^* K \right)^{\frac{r_n +1}{2}}x^\dagger + \alpha_n  \left[ \left( K^* K \right)^{\frac{r_n +1}{2}}  + \alpha_n I \right]^{-1} x_{\alpha_{n-1}, r_{n-1}}^{n-1}\\
&= \left\{ I - \alpha_n \left[ \left( K^* K \right)^{\frac{r_n +1}{2}}  + \alpha_n I \right]^{-1}  \right\} x^\dagger + \alpha_n \left[ \left( K^* K \right)^{\frac{r_n +1}{2}}  + \alpha_n I \right]^{-1}x_{\alpha_{n-1}, r_{n-1}}^{n-1},
\end{align*}
from which it follows that
\begin{align}
x^\dagger - x_{\alpha_n, r_n}^n &= \alpha_n \left[ \left( K^* K \right)^{\frac{r_n +1}{2}}  + \alpha_n I \right]^{-1} (x^\dagger - x_{\alpha_{n-1}, r_{n-1}}^{n-1}) \nonumber\\
&= (\cdots ) \mbox{ iterating the process $n-1$ times} \nonumber\\
&= \prod_{k=1}^n \alpha_k \left[ \left( K^* K \right)^{\frac{r_k +1}{2}}  + \alpha_k I \right]^{-1} x^\dagger \label{eq:6.3}
\end{align}
since $x_{\alpha_0, r_0}^0:=0$.
As a consequence, the method shall converge if and only if
\begin{equation}
\lim_{n\to \infty} \left \| \prod_{k=1}^n \alpha_k \left[ \left( K^* K \right)^{\frac{r_k +1}{2}}  + \alpha_k I \right]^{-1} x^\dagger \right \| =0
\end{equation}
for every $x^\dagger \in X$, namely, if and only if 
\begin{equation}
\lim_{n\to \infty} \int_{\sigma(K^*K)} \left| \prod_{k=1}^n \frac{\alpha_k}{\sigma^{r_k +1} + \alpha_k} \right|^2 \, d\langle E_{\sigma^2} x^\dagger, x^\dagger \rangle =0
\end{equation}
for every Borel-measure $\langle E x^\dagger, x^\dagger \rangle$ induced by $x^\dagger \in X$. Since 
$$
\left| \prod_{k=1}^n \frac{\alpha_k}{\sigma^{r_k +1}+ \alpha_k} \right|^2 \leq 1
$$
for every $n$, and since
$$
\int_{\sigma(K^*K)} d\langle E_{\sigma^2} x^\dagger, x^\dagger \rangle = \| x^\dagger \|^2,
$$
the Dominated Convergence Theorem \cite[Theorem~1.34, pag. 26]{rudin1987real} implies
\begin{equation}
\lim_{n\to \infty} \int_{\sigma(K^*K)} \left| \prod_{k=1}^n \frac{\alpha_k}{\sigma^{r_k +1}+ \alpha_k} \right|^2 \, d\langle E_{\sigma^2} x^\dagger, x^\dagger \rangle = \int_{\sigma(K^*K)} \lim_{n\to \infty }\left| \prod_{k=1}^n \frac{\alpha_k}{\sigma^{r_k +1}+ \alpha_k} \right|^2 \, d\langle E_{\sigma^2} x^\dagger, x^\dagger \rangle.
\end{equation}
Hence, the NSIWT method is convergent if and only if
\begin{equation}
\prod_{k=1}^\infty \frac{\alpha_k}{\sigma^{r_k +1}+ \alpha_k} = \prod_{k=1}^\infty \left( 1- \frac{\sigma^{r_k +1}}{\sigma^{r_k +1}+ \alpha_k} \right) = 0,
\end{equation}
for $\langle E x^\dagger, x^\dagger \rangle$-a.e. $\sigma^2$, i.e., for every $\sigma \in \sigma(K)\setminus \{ 0 \}$. Applying now Lemma \ref{lem:6.1} the thesis follows.
\end{proof}

\begin{corollary}\label{cor:6.1}
\begin{itemize}
\item[]
\item[(1)] If $\sup_{k \in \N}\{ r_k \} = r \in [0, \infty)$, then the NSIWT method converges if and only if $\sum_{k=1}^n \alpha_k ^{-1}$ diverges.
\item[(2)] Let $\lim_{k \to \infty} r_k = \infty$ monotonically. If $\left(   \sum_{k=1}^n \alpha_k ^{-1} \right)^{-1} = o (\sigma^{r_{n} +1})$ for every $\sigma \in \sigma(K) \setminus \{0\}$, then the NSIWT method converges.
\end{itemize}
\end{corollary}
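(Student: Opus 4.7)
The plan is to reduce both statements, via Theorem~\ref{thm:6.1}, to the divergence of $\sum_{k=1}^{\infty}\frac{\sigma^{r_k+1}}{\sigma^{r_k+1}+\alpha_k}$ for every $\sigma\in\sigma(K)\setminus\{0\}$, and then to trade this spectral series for $\sum 1/\alpha_k$ using Lemma~\ref{lem:6.2}. Throughout I would use the normalization $\sigma_1=1$ from the preceding remark, so that $\sigma\in(0,1]$ and $\sigma^{r_k+1}$ is well controlled by the hypotheses on $\{r_k\}$.

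For part (1), fix $\sigma\in\sigma(K)\setminus\{0\}$ and set $s:=\sigma^{r+1}\in(0,1]$. Since $r_k\in(0,r]$ and $\sigma\le 1$, the monotonicity of $t\mapsto\sigma^{t+1}$ and of $t\mapsto t/(t+\alpha_k)$ give
\[
\frac{s}{s+\alpha_k}\;\le\;\frac{\sigma^{r_k+1}}{\sigma^{r_k+1}+\alpha_k}\;\le\;\frac{1}{1+\alpha_k}\;\le\;\frac{1}{s}\cdot\frac{s}{s+\alpha_k},
\]
where the last inequality uses $s\le 1$. Thus the spectral series diverges if and only if $\sum_k s/(s+\alpha_k)$ diverges, and Lemma~\ref{lem:6.2} (with $\lambda=s$ and $t_k=\alpha_k$) equates this with divergence of $\sum 1/\alpha_k$. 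Since the resulting criterion no longer depends on $\sigma$, Theorem~\ref{thm:6.1} yields the stated equivalence.

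For part (2), I would first note that the hypothesis $(\sum 1/\alpha_k)^{-1}=o(\sigma^{r_n+1})$ forces $\sum 1/\alpha_k=\infty$: because $K$ is compact with infinite-dimensional range there exists $\sigma\in\sigma(K)\setminus\{0\}$ with $\sigma<1$, and for such $\sigma$ one has $\sigma^{r_n+1}\to 0$, ruling out a positive constant on the left. Now fix $\sigma$ and use the elementary bound $\frac{a}{a+b}\ge\tfrac{1}{2}\min(1,a/b)$, so that it suffices to show $\sum_k\min(1,\sigma^{r_k+1}/\alpha_k)=\infty$. If $\sigma^{r_k+1}\ge\alpha_k$ infinitely often this is immediate; otherwise there is $N$ with $\sigma^{r_k+1}/\alpha_k<1$ for $k\ge N$, and the monotonicity of $\{r_k\}$ together with $\sigma\le 1$ gives $\sigma^{r_k+1}\ge\sigma^{r_n+1}$ for $N\le k\le n$, whence the Abel-type lower bound
\[
\sum_{k=N}^{n}\frac{\sigma^{r_k+1}}{\alpha_k}\;\ge\;\sigma^{r_n+1}\sum_{k=N}^{n}\frac{1}{\alpha_k}.
\]
By Lemma~\ref{lem:6.3} (with $c=1$, since $\sum 1/\alpha_k=\infty$) and the assumption $\sigma^{r_n+1}\sum_{k=1}^{n}1/\alpha_k\to\infty$, the right-hand side tends to infinity, and Theorem~\ref{thm:6.1} delivers convergence of NSIWT.

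The main technical point I expect is reconciling the $k$-dependent factor $\sigma^{r_k+1}$ with the fixed $\lambda$ required by Lemma~\ref{lem:6.2}: in (1) the uniform bound $r_k\le r$ collapses it into a single constant $s$, while in (2) the \emph{monotonicity} of $\{r_k\}$ (not merely $r_k\to\infty$) is what licenses pulling the smallest factor $\sigma^{r_n+1}$ out of the partial sum. Without monotonicity the Abel step would break and one would need a more delicate rearrangement argument.
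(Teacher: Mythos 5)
Your overall strategy---reducing both claims to the divergence criterion of Theorem~\ref{thm:6.1} and then trading the spectral series for $\sum_k\alpha_k^{-1}$ via the comparison lemmas---is essentially the paper's. Your part (2) is correct, and in fact your case split (either $\sigma^{r_k+1}\ge\alpha_k$ infinitely often, so the terms do not vanish, or eventually $\sigma^{r_k+1}<\alpha_k$, in which case monotonicity of $\{r_k\}$ lets you pull $\sigma^{r_n+1}$ out of the partial sum and invoke Lemma~\ref{lem:6.3}) is tidier than the paper's own argument, which asserts $\frac{1}{\sigma^{r_n+1}+\alpha_k}\ge\frac12\alpha_k^{-1}$ for all $n\ge N(\sigma)$ without addressing uniformity in $k$.

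The gap is in part (1): you invoke Lemma~\ref{lem:6.2} with $t_k=\alpha_k$, but that lemma assumes $\lim_{k\to\infty}t_k=t\in(0,\infty]$ exists, and nothing in the corollary guarantees this. The excluded case $\alpha_k\to0$ is not exotic---it is the canonical choice $\alpha_k=\alpha_0q^k$ used throughout the paper---and there the asymptotic equivalence of the lemma can genuinely fail (for $\alpha_k=1/k$ one has $\sum_{k\le n}\alpha_k^{-1}\sim n^2/2$ while $\sum_{k\le n}\frac{s}{s+\alpha_k}\sim n$), even though the divergence equivalence you actually need remains true. Note that one direction is free of any lemma: $\frac{s}{s+\alpha_k}\le\frac{s}{\alpha_k}$ shows divergence of $\sum_k\frac{s}{s+\alpha_k}$ forces $\sum_k\alpha_k^{-1}=\infty$. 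For the converse the paper argues by cases: if $\alpha_k\to0$ or $\{\alpha_k\}$ has no limit, the terms $\frac{s}{s+\alpha_k}$ do not tend to zero and the series diverges trivially; only when $\alpha_k\to\alpha\in(0,\infty]$ is Lemma~\ref{lem:6.2} applied. Alternatively, your own bound $\frac{a}{a+b}\ge\frac12\min(1,a/b)$ from part (2) closes the hole directly: if $\sum_k\alpha_k^{-1}=\infty$, then either $\alpha_k\le s$ infinitely often (terms bounded below by $1/2$) or eventually $\frac{s}{s+\alpha_k}\ge\frac{s}{2\alpha_k}$, which diverges. With that repair the proof is complete and otherwise follows the paper's route.
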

\begin{proof}
(1) For every $\sigma \in \sigma(K)\setminus \{0\}$, we observe that
\begin{equation}\label{eq:cor25}
\sum_{k=1}^\infty \frac{\sigma^{r+1}}{\sigma^{r +1}+ \alpha_k } \leq \sum_{k=1}^\infty \frac{\sigma^{r_k +1}}{\sigma^{r_k +1}+ \alpha_k } \leq \sum_{k=1}^\infty \frac{1}{1 + \alpha_k } \leq \sum_{k=1}^\infty \frac{1}{ \alpha_k }. 
\end{equation}
If the NSIWT method converges then, by Theorem~\ref{thm:6.1} and by \eqref{eq:cor25}, $\sum_{k=1}^\infty \frac{\sigma^{r_k +1}}{\sigma^{r_k +1}+ \alpha_k }$ diverges and hence $\sum_{k=1}^\infty \frac{1}{\alpha_k } = \infty$. On the other hand, if $\sum_{k=1}^\infty \alpha_k ^{-1} = \infty$, then we can possibly have three different cases: $\lim_{k\to \infty} \alpha_k =0$, $\nexists \lim_{k\to \infty}\alpha_k$ or $\lim_{k\to \infty} \alpha_k \in (0, \infty]$. In the first two cases, $\frac{\sigma^{r+1}}{\sigma^{r +1}+ \alpha_k } \nrightarrow 0$ for every $\sigma >0$, and then the corresponding series diverges. In the latter case instead, by Lemma \ref{lem:6.2}, $\sum_{k=1}^n \frac{1}{\alpha_k} \sim c_{\sigma,r} \sum_{k=1}^n \frac{\sigma^{r+1}}{\sigma^{r+1}+ \alpha_k}$ for every $\sigma >0$. Then, by $\sum_{k=1}^\infty \alpha_k ^{-1} = \infty$, we deduce that $\sum_{k=1}^\infty \frac{\sigma^{r_k +1}}{\sigma^{r_k +1}+ \alpha_k }$ diverges for every $\sigma>0$ and the NSIWT method converges.

(2) We can assume that $0<\sigma <1$. For $\sigma =1$ the result is indeed trivial owing to the equivalence
$$
\sum_{k=1}^\infty \frac{1}{1+\alpha_k} = \infty \Longleftrightarrow  \sum_{k=1}^\infty \alpha_k ^{-1} = \infty \qquad \mbox{(see the previous point)}.
$$
On the other hand, if $\sigma <1$ then we have  $\sigma^{r_n +1} \to 0$ and $\frac{1}{\sigma^{r_n +1} + \alpha_k} \sim \alpha_k ^{-1}$, for $n \to \infty$. Therefore, there exists $N=N(\sigma)$ such that $\frac{1}{\sigma^{r_n +1} + \alpha_k} \geq \frac{1}{2} \alpha_k ^{-1}$ for every $n \geq N$. Hence, we have
$$
\frac{1}{2} \sigma^{r_n +1} \sum_{k=N}^{n} \alpha_k ^{-1} \leq \sigma^{r_n +1} \left(\sum_{k=1}^{N-1} \frac{1}{\sigma^{r_n +1}+ \alpha_k} + \frac{1}{2} \sum_{k=N}^n \alpha_k ^{-1} \right) \leq \sum_{k=1}^n \frac{\sigma^{r_n +1}}{\sigma^{r_n +1} + \alpha_k} \leq \sum_{k=1}^n \frac{\sigma^{r_k +1}}{\sigma^{r_k +1} + \alpha_k}. 
$$
Since, by Lemma \ref{lem:6.3}, $\sum_{k=N}^{n} \alpha_k ^{-1} \sim \sum_{k=1}^{n} \alpha_k ^{-1}$ then, by the preceding inequalities, the hypothesis $\left(\sum_{k=1}^{n} \alpha_k ^{-1}\right)^{-1} = o(\sigma^{r_n +1})$ implies that $\sum_{k=1}^n \frac{\sigma^{r_k +1}}{\sigma^{r_k +1} + \alpha_k} = \infty$ and the NSIWT method converges. 
\end{proof}

Corollary~\ref{cor:6.1} applies immediately to the stationary case, where $\alpha_k = \alpha$ and $r_k =r$ for every $k \in \N$,
showing that SIWT converges.
On the other hand, from point \textit{(2)} of Corollary~\ref{cor:6.1}, given a monotone divergent sequence $r_k \to \infty$ we need a sequence $\alpha_k \to 0$ such that $\alpha_k = o(\sigma^{r_k +1})$ for every $\sigma >0$ in order to preserve the convergence of NSIWT.

Now, we investigate the convergence rate of NSIWT.
\begin{theorem}\label{thm:6.2}
Let $\{x_{\alpha_n, r_n}^n \}_{n \in \N}$ be a convergent sequence of the NSIWT method, with $x^\dagger \in \X_{\nu}$ for some $\nu >0$, and let $\{ \vartheta_n \}_{n \in \N}$ be a divergent sequence of positive real numbers. If
\begin{subequations}
\begin{equation} 
\lim_{n\to \infty} \vartheta_n \sigma^\nu \prod_{k=1}^n \left( 1 - \frac{\sigma^{r_k +1}}{\sigma^{r_k +1} + \alpha_k}  \right)=0 \qquad \mbox{for every } \sigma \in \sigma(K)\setminus \{0\}; \label{eq:6.4}
\end{equation}
\begin{equation}
\sup_{\sigma \in  \sigma(K)\setminus \{0\}} \vartheta_n \sigma^\nu \prod_{k=1}^n \left( 1 - \frac{\sigma^{r_k +1}}{\sigma^{r_k +1} + \alpha_k}  \right) \leq c < \infty \qquad \mbox{ uniformly with respect to } n, \label{eq:6.5}
\end{equation}
\end{subequations}
then 
\begin{equation}
\| x^\dagger - x_{\alpha_n, r_n}^n \| = o (\vartheta_n ^{-1}).
\end{equation}
\end{theorem}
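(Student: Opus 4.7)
The plan is to combine the spectral representation of the error already derived in Theorem~\ref{thm:6.1} with the source condition $x^\dagger \in \X_\nu$, and then close the argument with a dominated convergence step.

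First I would start from the identity
\[
x^\dagger - x_{\alpha_n, r_n}^n \;=\; \prod_{k=1}^n \alpha_k \left[ (K^*K)^{\frac{r_k+1}{2}} + \alpha_k I\right]^{-1} x^\dagger,
\]
which is exactly formula \eqref{eq:6.3}. Using functional calculus and the fact that $x^\dagger \in \X_\nu$, i.e.\ $x^\dagger = (K^*K)^{\nu/2}\omega$ for some $\omega \in \X$, the spectral measure transforms as $d\langle E_{\sigma^2} x^\dagger, x^\dagger\rangle = \sigma^{2\nu}\, d\langle E_{\sigma^2}\omega,\omega\rangle$. Multiplying by $\vartheta_n^2$ yields
\[
\vartheta_n^2 \, \|x^\dagger - x_{\alpha_n, r_n}^n\|^2 \;=\; \int_{\sigma(K^*K)} \left[\vartheta_n\, \sigma^\nu \prod_{k=1}^n \left(1 - \frac{\sigma^{r_k+1}}{\sigma^{r_k+1}+\alpha_k}\right)\right]^2 d\langle E_{\sigma^2}\omega,\omega\rangle.
\]

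Next I would invoke the Dominated Convergence Theorem on the last integral. Hypothesis \eqref{eq:6.5} supplies a uniform (in $n$) bound $c^2$ for the integrand on $\sigma(K)\setminus\{0\}$; at $\sigma=0$ the integrand is identically zero because of the factor $\sigma^\nu$ (noting $\nu>0$), so the integrand is uniformly bounded on all of $\sigma(K^*K)$ by a constant. Since $\int d\langle E_{\sigma^2}\omega,\omega\rangle = \|\omega\|^2 < \infty$, the constant function is integrable and serves as the required majorant. Hypothesis \eqref{eq:6.4} then guarantees pointwise convergence to $0$ of the integrand for every $\sigma \in \sigma(K)\setminus\{0\}$, which is enough to conclude
\[
\lim_{n\to\infty} \vartheta_n^2 \, \|x^\dagger - x_{\alpha_n, r_n}^n\|^2 \;=\; 0,
\]
yielding $\|x^\dagger - x_{\alpha_n, r_n}^n\| = o(\vartheta_n^{-1})$.

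The argument is essentially a reprise of the one used in Theorem~\ref{thm:6.1}, with the difference that now we exploit the source condition to produce the extra factor $\sigma^{2\nu}$ in the spectral measure, which is what allows $\vartheta_n\to\infty$ to be accommodated. The only mild subtlety is checking that the integrand is genuinely dominated on the whole of $\sigma(K^*K)$, including the accumulation point at $\sigma = 0$; this is taken care of by the $\sigma^\nu$ factor and by \eqref{eq:6.5}. No step looks like a serious obstacle — the real content of the theorem lies in the two hypotheses \eqref{eq:6.4}--\eqref{eq:6.5}, which will be the focus of later corollaries specialising the sequences $\{\alpha_n\},\{r_n\},\{\vartheta_n\}$.
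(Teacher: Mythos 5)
Your proposal is correct and follows essentially the same route as the paper: start from the error identity \eqref{eq:6.3}, use the source condition $x^\dagger=(K^*K)^{\nu/2}\omega$ to pull out the factor $\sigma^{2\nu}$ in the spectral integral, dominate via \eqref{eq:6.5}, and pass to the limit with the Dominated Convergence Theorem using \eqref{eq:6.4}. Your write-up is in fact slightly more explicit than the paper's (e.g.\ the treatment of the accumulation point $\sigma=0$), but the argument is the same.
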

\begin{proof}
From equation \eqref{eq:6.3}, for $x^\dagger \in \X_{\nu}$, we have 
\begin{align*}
\lim_{n \to \infty} \vartheta_n \| x^\dagger - x_{\alpha_n, r_n}^n \| &= \lim_{n \to \infty}  \left[ \int_{\sigma(K^* K)} \left| \vartheta_n \sigma^\nu  \prod_{k=1}^n\left( 1 - \frac{\sigma^{r_k +1}}{\sigma^{r_k +1} + \alpha_k}  \right)  \right|^2 \, d\langle E_{\sigma^2} \omega, \omega \rangle \right]^{1/2}\\
&= \left[ \int_{\sigma(K^* K)} \left|\lim_{n \to \infty} \vartheta_n \sigma^\nu  \prod_{k=1}^n\left( 1 - \frac{\sigma^{r_k +1}}{\sigma^{r_k +1} + \alpha_k}  \right)  \right|^2 \, d\langle E_{\sigma^2} \omega, \omega \rangle \right]^{1/2},
\end{align*}
by \eqref{eq:6.5} and the Dominated Convergence Theorem. Now, from hypothesis \eqref{eq:6.4}, the thesis follows.
\end{proof}

\begin{corollary}\label{cor:6.2}
We define 
$$
\beta_n = \sum_{k=1}^n \alpha_k ^{-1}, \qquad \tilde{\beta}_n =\sum_{k=1}^n \frac{1}{1+ \alpha_k}.
$$
Let $\{ r_k \}_{k \in \N}$ be a sequence of positive real numbers, $r_k \geq 0$, and let $x^\dagger \in \X_{\nu}$ for some $\nu >0$. If
\begin{itemize}
\item[(i.1)] $\sup_{k \in \N}\{ r_k \} = r \in [0, \infty)$,
\item[(i.2)] $\lim_{n \to \infty} \beta_n = \infty$,
\end{itemize}
then
\begin{subequations}
\label{eq:6.7}
  \begin{align}[left = {\| x^\dagger - x_{\alpha_n, r_n}^n \| = \empheqlbrace}]
    &o (\beta_n ^{- \frac{\nu}{r+1}}) &\mbox{if } \lim_{n\to \infty}\alpha_n = \alpha \in (0, \infty] \label{eq:6.7.1} \\
    &O (\beta_n ^{- \frac{\nu}{r+1}})  &\mbox{if } \lim_{n\to \infty}\alpha_n =0 \mbox{ and } \alpha_n^{-1}\leq c \beta_{n-1}, \, c>0 \label{eq:6.7.2}\\
    &o (\tilde{\beta}_n ^{- \frac{\nu}{r+1}})  &\mbox{otherwise}. \label{eq:6.7.3}
   \end{align}
\end{subequations}
On the contrary, if
\begin{itemize}
\item[(ii.1)] $r_k \to \infty$ monotonically,
\item[(ii.2)] $\beta_n ^{-1}= o(\sigma^{r_n +1})$ for every $\sigma \in \sigma(K)\setminus \{0\}$,
\end{itemize}
then
\begin{equation}\label{eq:6.8}
\| x^\dagger - x_{\alpha_n, r_n}^n \| = o (\beta_n ^{- \frac{\nu}{r_n +1}}).
\end{equation}
\end{corollary}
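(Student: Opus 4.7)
The plan is to apply Theorem~\ref{thm:6.2} with a carefully chosen divergent weight $\vartheta_n$: namely $\beta_n^{\nu/(r+1)}$ in \eqref{eq:6.7.1}-\eqref{eq:6.7.2}, $\tilde\beta_n^{\nu/(r+1)}$ in \eqref{eq:6.7.3}, and $\beta_n^{\nu/(r_n+1)}$ in \eqref{eq:6.8}. The core tool is the elementary bound
\[
\prod_{k=1}^n(1-t_k)\le \exp\Bigl(-\sum_{k=1}^n t_k\Bigr),\qquad t_k\in[0,1),
\]
applied to $t_k=\sigma^{r_k+1}/(\sigma^{r_k+1}+\alpha_k)$. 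The entire verification of \eqref{eq:6.4}--\eqref{eq:6.5} then reduces to lower bounds on $\sum_{k=1}^n t_k$ in terms of $\sigma$, $\beta_n$, and $\tilde\beta_n$.

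\textbf{Bounded-$r_k$ case.} As in the proof of Corollary~\ref{cor:6.1} I restrict to $\sigma\in(0,1]$. Monotonicity of $x\mapsto x/(x+\alpha_k)$ together with $r_k\le r$ yields $\sum_{k=1}^n t_k\ge \sum_{k=1}^n \sigma^{r+1}/(\sigma^{r+1}+\alpha_k)$. When $\lim\alpha_n=\alpha\in(0,\infty]$, Lemma~\ref{lem:6.2} (with $\lambda=\sigma^{r+1}$ and the $t_k$ of the lemma equal to $\alpha_k$) identifies this partial sum with $c_\sigma\beta_n$; rescaling yields $\sum_{k=1}^n t_k\ge c\sigma^{r+1}\beta_n$ for some universal $c>0$ and $n$ large enough. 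Setting $\tau:=c\sigma^{r+1}\beta_n$,
\[
\vartheta_n\sigma^\nu\prod_{k=1}^n(1-t_k)\le (\tau/c)^{\nu/(r+1)}e^{-\tau}.
\]
For fixed $\sigma>0$ we have $\tau\to\infty$, so the right-hand side tends to $0$, proving \eqref{eq:6.4}; uniformly in $\sigma$, it is bounded by $\sup_{\tau\ge 0}\tau^{\nu/(r+1)}e^{-\tau}<\infty$, proving \eqref{eq:6.5}, and Theorem~\ref{thm:6.2} delivers \eqref{eq:6.7.1}. The variant \eqref{eq:6.7.3} follows from the same scheme after the universal replacement $t_k\ge \sigma^{r+1}/(1+\alpha_k)$ and a second application of Lemma~\ref{lem:6.2} with $\lambda=1$, so that $\tilde\beta_n$ takes the role of $\beta_n$; no assumption on $\lim\alpha_k$ is needed here.

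\textbf{The $O$-case \eqref{eq:6.7.2}.} When $\alpha_n\to 0$, Lemma~\ref{lem:6.2} does not apply (its hypothesis requires a positive or infinite limit) and $\sum_{k=1}^n t_k$ must be bounded by hand. Splitting at the first index $N=N(\sigma)$ from which $\alpha_k\le\sigma^{r+1}$, the tail contributes at least $\tfrac12(n-N)$ (each of those terms being $\ge 1/2$) and the head is at least $\tfrac12\sigma^{r+1}\sum_{k<N}\alpha_k^{-1}$. The growth condition $\alpha_n^{-1}\le c\beta_{n-1}$ is precisely what ensures the whole sum is still $\gtrsim \sigma^{r+1}\beta_n$, but only up to a multiplicative constant rather than an asymptotic equivalence, which is exactly why the weaker $O$ (not $o$) rate survives.

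\textbf{Case (ii) and main obstacle.} For $r_k\nearrow\infty$ I would reuse the chain of inequalities already derived in the proof of Corollary~\ref{cor:6.1}(2), which gives $\sum_{k=1}^n t_k\ge \tfrac12\sigma^{r_n+1}(\beta_n-C_\sigma)$. Taking $\vartheta_n=\beta_n^{\nu/(r_n+1)}$ and $\tau_n:=\sigma^{r_n+1}\beta_n$ (which diverges by hypothesis (ii.2)),
\[
\vartheta_n\sigma^\nu\prod_{k=1}^n(1-t_k)\le \tau_n^{\nu/(r_n+1)}e^{-\tau_n/2+C_\sigma}\longrightarrow 0,
\]
which yields \eqref{eq:6.4}. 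The main technical difficulty in this case is the uniform bound \eqref{eq:6.5}: the exponent $a_n:=\nu/(r_n+1)$ depends on $n$ and tends to $0$, so I need $\sup_{\tau\ge 0}\tau^{a_n}e^{-c\tau}$ to stay bounded independently of $n$. This follows from the explicit maximum $(a_n/(ce))^{a_n}$ together with the elementary limit $x^x\to 1$ as $x\to 0^+$, keeping the suprema uniformly controlled as $a_n$ ranges in $(0,\nu]$. The other subtle point, as noted above, is the $O$-versus-$o$ loss forced by the failure of Lemma~\ref{lem:6.2} when $\alpha_k\to 0$, which justifies the separate treatment of \eqref{eq:6.7.2}.
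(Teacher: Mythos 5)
Your treatment of \eqref{eq:6.7.1}, \eqref{eq:6.7.3} and \eqref{eq:6.8} is essentially the paper's own argument: verify \eqref{eq:6.4}--\eqref{eq:6.5} of Theorem~\ref{thm:6.2} via $\prod_k(1-t_k)\le e^{-\sum_k t_k}$, the bound $e^{-x}\le c\,x^{-\nu/(r+1)}$, and Lemma~\ref{lem:6.2} (resp.\ the estimates from Corollary~\ref{cor:6.1}(2) in the unbounded case), with $\vartheta_n$ chosen as the corresponding power of $\beta_n$ or $\tilde\beta_n$; your remark on the uniformity in $n$ of $\sup_{\tau\ge0}\tau^{a_n}e^{-c\tau}=(a_n/(ce))^{a_n}$ is a legitimate point that the paper glosses over as well.

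The genuine gap is in your case \eqref{eq:6.7.2}. The claim that the growth condition $\alpha_n^{-1}\le c\beta_{n-1}$ ensures $\sum_{k=1}^n\frac{\sigma^{r+1}}{\sigma^{r+1}+\alpha_k}\ge c'\,\sigma^{r+1}\beta_n$ is false: take $\alpha_k=q^k$, $q\in(0,1)$, which satisfies $\alpha_n^{-1}\le q^{-1}\beta_{n-1}$; then for any fixed $\sigma\in(0,1)$ each summand is at most $1$, so the left side grows at most linearly in $n$, while $\sigma^{r+1}\beta_n$ grows geometrically. In your splitting, the tail indices $k\ge N(\sigma)$ contribute only about $(n-N)/2$ to the sum but $\sigma^{r+1}\sum_{k\ge N}\alpha_k^{-1}$ is unbounded relative to $n-N$, so the head/tail bookkeeping does not close. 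Consequently the exponential bound yields only $\sigma^\nu\prod_k(1-t_k)\le\sigma^\nu e^{-cn}$, and $\beta_n^{\nu/(r+1)}e^{-cn}$ diverges whenever $q<e^{-c(r+1)/\nu}$, so \eqref{eq:6.5} cannot be verified this way; indeed at $\sigma^{r+1}\approx\alpha_n$ the product $\prod_{k\le n}\frac{\alpha_k}{\sigma^{r+1}+\alpha_k}$ stays bounded away from $0$, showing that $1-x\le e^{-x}$ is far too lossy in this regime and the decay must come from balancing $\sigma^\nu$ against the product. This is exactly why the paper does not argue via Theorem~\ref{thm:6.2} here (which in any case only produces little-$o$ conclusions, not the big-$O$ of \eqref{eq:6.7.2}): it uses the monotonicity $1-\frac{\sigma^{r_k+1}}{\sigma^{r_k+1}+\alpha_k}\le 1-\frac{\sigma^{r+1}}{\sigma^{r+1}+\alpha_k}$ to reduce to a Tikhonov-type product and then invokes Lemmas 1--3 and Theorem~1 of \cite{hanke1998nonstationary}, whose direct (inductive) estimate $\sup_\sigma\sigma^\nu\prod_k\frac{\alpha_k}{\sigma^{r+1}+\alpha_k}\le c\,\beta_n^{-\nu/(r+1)}$ is the missing ingredient in your proposal.
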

\begin{proof}
First, note that from (i.1), (i.2) and Corollary \ref{cor:6.1} it follows that the NSIWT method is convergent. Now, since $1-x \leq \textrm{e}^{-x} \leq c_{\nu,r} x^{-\nu/r+1}$, and using (i.2), we have 
\begin{align*}
\sigma^{\nu} \prod_{k=1}^n\left( 1 - \frac{\sigma^{r_k +1}}{\sigma^{r_k +1} + \alpha_k} \right) &\leq \sigma^\nu \textrm{e}^{-\sum_{k=1}^n \frac{\sigma^{r_k +1}}{\sigma^{r_k +1} + \alpha_k}}\\
&\leq  \sigma^\nu \textrm{e}^{-\sigma^{r +1} \sum_{k=1}^n \frac{1}{\sigma^{r +1} + \alpha_k}}\\
&\leq c_{\nu, r} \sigma^\nu  \left( \frac{1}{\sigma^{r +1} \sum_{k=1}^n \frac{1}{\sigma^{r +1} + \alpha_k}} \right)^{\frac{\nu}{r+1}}\\
&\leq c_{\nu, r} \left( \sum_{k=1}^n \frac{1}{1 + \alpha_k} \right)^{-\frac{\nu}{r+1}}.
\end{align*}
Therefore, conditions \eqref{eq:6.4} and \eqref{eq:6.5} of Theorem \ref{thm:6.2} are satisfied with $\vartheta_n = \left( \sum_{k=1}^n \frac{1}{1 + \alpha_k} \right)^{\frac{\nu}{r+1}}$. If $\lim_{k \to \infty}\alpha_k = \alpha \in (0, \infty]$, then  $\beta_n \sim c  \sum_{k=1}^n \frac{1}{1 + \alpha_k}$ for $n \to \infty$ by Lemma \ref{lem:6.2}. Equations \eqref{eq:6.7.1} and \eqref{eq:6.7.3} follow. Eventually, observing that $1 - \frac{\sigma^{r_k +1}}{\sigma^{r_k +1} + \alpha_k} \leq 1 - \frac{\sigma^{r +1}}{\sigma^{r +1} + \alpha_k}$,  equation \eqref{eq:6.7.2} follows instead by a straightforward application of [Lemma 1,2,3 and Theorem~1]\cite{hanke1998nonstationary}.

To prove equation \eqref{eq:6.8} the strategy is the same. We have  $e^{-x} \leq x^{-\nu/(r_n +1)}$ definitely, $1/(\sigma^{r_n +1} + \alpha_k) \sim \alpha_k ^{-1}$ for $n \to \infty$, and hypothesis (ii.2) implies that $\beta_n ^{-1/( r_n +1)}\to 0$ converges to zero.
\end{proof}

When $r=1$ (classical iterated Tikhonov), equation~\eqref{eq:6.7.2} is the result in \cite[Theorem~1]{hanke1998nonstationary}. On the other hand, if $\lim_{n \to \infty} \alpha_n = \alpha \in (0, \infty]$, then the convergence rate is improved by the small  \nolinebreak ``$o$''.


\begin{remark}
As we stated in \eqref{eq:6.7.2}, when $\lim_{n \to \infty} \alpha_n
=0$, to obtain a convergence rate of order $O(\beta_n ^{-\nu/(r+1)})$
the sequence $\{ \alpha_n \}$ has to satisfy the condition $\alpha_n
^{-1} \leq c \beta_{n-1}$ for a positive real number $c>0$. Then,
$\sum_{k=1}^n \alpha_k ^{-1} = \beta_n =  O(q^n)$, where $q=(1+c)
>1$. To overcome this bound, in
virtue of (ii.1), (ii.2) of Corollary \ref{cor:6.2}, choosing
sequences $\{ \hat{r}_n \}$ and $\{ \hat{\alpha}_n \}$ such that
$\hat{r}_n$ diverges monotonically and $ \left( \sum_{k=1}^n
\hat{\alpha}_k ^{-1} \right)^{-1} = o(\sigma^{\hat{r}_n +1})$ for
every $0< \sigma \leq 1$, we are able to obtain a faster convergence
rate, in a sense that has still to be defined. In the following
Proposition \ref{prop:6.6} we will give the proof for a specific
case.

\end{remark}

Following the same approach in \cite[(2.3), (2.4) pag.
26]{brill1987iterative}, we say that the sequence $\{
\hat{x}_n \}$ converges uniformly faster than the sequence
$\{ x_n \}$ if
\begin{equation}\label{eq:6.20}
x^\dagger - \hat{x}_n = R_n (x^\dagger - x_n),
\end{equation}
where $\{ R_n \}$ is a sequence of operators such that $\| R_n \|
\to 0$ as $n \to \infty$. We say instead that $\{ \hat{x}_n \}$
converges non-uniformly faster than $\{ x_n \}$ if \eqref{eq:6.20}
holds and
$$
\inf_{n \in \N} \| R_n \| > 0, \qquad \lim_{n\to \infty} \| R_n x \|
=0  \; \mbox{ for every } x \in \X.
$$
We are ready to state the following comparison result.

\begin{proposition}\label{prop:6.6}
Let $\{x_{\alpha_n}^n\}$ be the sequence generated by the nonstationary iterated Tikhonov
with $\alpha_n = \alpha_0 q^n$, where $\alpha_0 \in (0, \infty), q \in (0,1)$, and let
$\{ x_{\hat{\alpha}_n, \hat{r}_n}^n \}$ be the sequence generated by NSIWT, where
$\hat{\alpha}_n = 1/n!$ and $\hat{r}_n = n$, both applied to the
same compact operator $K : \X \to \Y$. Then, $\{x_{\hat{\alpha}_n,
\hat{r}_n}^n  \}$ converges, non uniformly, faster than $\{
x_{\alpha_n }^n \}$.
\end{proposition}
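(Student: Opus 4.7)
The plan is to exploit the closed-form error representations derived in the proof of Theorem~\ref{thm:6.1}. Specializing equation~\eqref{eq:6.3} to the two parameter sequences yields, via the functional calculus of $K^*K$,
\[
x^\dagger - x_{\alpha_n}^n = g_n(K^*K)\,x^\dagger,\qquad x^\dagger - x_{\hat\alpha_n,\hat r_n}^n = h_n(K^*K)\,x^\dagger,
\]
where
\[
g_n(\lambda):=\prod_{k=1}^n\frac{\alpha_0 q^k}{\lambda+\alpha_0 q^k},\qquad h_n(\lambda):=\prod_{k=1}^n\frac{1/k!}{\lambda^{(k+1)/2}+1/k!}.
\]
Without loss of generality $\|K\|=1$ (as in Remark on the norm of $K$), so $\sigma(K^*K)\subseteq[0,1]$, and both $g_n$ and $h_n$ are strictly positive continuous functions on this interval.

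Next I set $F_n:=h_n/g_n$ and define $R_n:=F_n(K^*K)$ by functional calculus. Since $g_n>0$ on $[0,1]$, this is a well-defined bounded self-adjoint operator on $\X$. The identity $F_n\,g_n=h_n$ translates at the operator level into $R_n(x^\dagger-x_{\alpha_n}^n)=x^\dagger-x_{\hat\alpha_n,\hat r_n}^n$, which is precisely \eqref{eq:6.20}. The lower bound $\inf_n\|R_n\|\geq 1>0$ then follows immediately: because $K$ is compact with infinite-dimensional range, $0\in\sigma(K^*K)$, and evaluation at zero gives $F_n(0)=h_n(0)/g_n(0)=1$, whence $\|R_n\|\geq |F_n(0)|=1$.

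The core of the argument is to prove that $\|R_n x\|\to 0$ for every $x\in\X$. By the spectral theorem $\|R_n x\|^2=\int F_n(\lambda)^2\,d\|E_\lambda x\|^2$, so it suffices to combine a pointwise estimate $F_n(\lambda)\to 0$ for every $\lambda\in\sigma(K^*K)\setminus\{0\}$ with a uniform domination of $\{F_n\}$, and invoke dominated convergence. The pointwise limit is the quantitative heart: taking logarithms and applying Stirling's formula,
\[
-\log g_n(\lambda)\sim \frac{n^2}{2}\log(1/q),\qquad -\log h_n(\lambda)\sim \sum_{k=1}^n\log k!\sim \frac{n^2}{2}\log n,
\]
so $\log F_n(\lambda)=\log h_n(\lambda)-\log g_n(\lambda)\to -\infty$, i.e.\ the factorial decay of $h_n$ eventually dominates the merely geometric decay of $g_n$.

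The main obstacle is the uniform domination. Writing $F_n=\prod_{k=1}^n r_k$ with
\[
r_k(\lambda)=\frac{1+\lambda/(\alpha_0 q^k)}{1+k!\,\lambda^{(k+1)/2}},
\]
one has $F_n(0)=1$ and a single peak of $F_n$ that wanders with $n$; to control it one should split the product at the crossover index $k^*(\lambda)$ where $r_k$ transitions from $>1$ (small $k$, where $1/(\alpha_0 q^k)$ dominates) to $\leq 1$ (large $k$, where $k!$ dominates), and show by Stirling that the accumulated ``large-factor'' contribution from $k\leq k^*(\lambda)$ is bounded uniformly in $n$. Once this domination is in hand, Lebesgue's theorem applied to the spectral integral concludes that $R_n x \to 0$ for every $x\in\X$, establishing the claimed non-uniform faster convergence.
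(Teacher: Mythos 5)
Your plan reproduces, up to notation, the paper's own argument for most of its length: the operator $R_n$ you build from the symbol $F_n=h_n/g_n$ is exactly the paper's $R_n=\prod_{k=1}^n\hat{\alpha}_k\alpha_k^{-1}\bigl((K^*K)^{\frac{\hat{r}_k+1}{2}}+\hat{\alpha}_kI\bigr)^{-1}\bigl(K^*K+\alpha_kI\bigr)$, the bound $\inf_n\|R_n\|\geq 1$ via $F_n(0)=1$ and $0\in\sigma(K^*K)$ is the same, and your Stirling computation of the pointwise decay $F_n(\lambda)\to0$ for $\lambda>0$ is just a repackaging of the paper's observation that the individual factors tend to $0$ (equivalently, that the associated series diverges), factorial decay beating the geometric one.

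The genuine gap is precisely the step you defer as ``the main obstacle'', and it is not merely unfinished: the uniform domination you propose is false. Take $\lambda=n^{-2}$. For $1\leq k\leq n$ one has $k!\lambda^{(k+1)/2}=k!/n^{k+1}\leq 1/n^2$ (the sequence is nonincreasing in $k$), so the denominator of $F_n$ is at most $(1+n^{-2})^n\leq e$, while the numerator is at least $\prod_{k=1}^n\frac{1}{\alpha_0n^2q^k}=(\alpha_0n^2)^{-n}q^{-n(n+1)/2}$; hence
\[
F_n(n^{-2})\;\geq\; e^{-1}(\alpha_0 n^{2})^{-n}\,q^{-n(n+1)/2}\;\longrightarrow\;\infty .
\]
Thus $\sup_{\lambda\in(0,1]}F_n(\lambda)$ blows up like $q^{-n^2/2}$: no bound uniform in $(n,\lambda)$ exists, and the $\lambda$-wise envelope $\sup_nF_n(\lambda)$, though finite for each $\lambda>0$, explodes as $\lambda\to0^+$, so its integrability against the spectral measure of an arbitrary $x$ cannot be guaranteed and Lebesgue's theorem does not close the argument (also, since $F_n(0)=1$, a kernel component of $x$ must anyway be excluded). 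Worse, if $\sigma(K^*K)$ contains points near the wandering peaks (e.g. $\sigma_m=1/m$, so that $\sigma_n^2=n^{-2}$), then $\|R_n\|=\sup_m|F_n(\sigma_m^2)|\to\infty$, and by Banach--Steinhaus there exists $x\in\X$ with $\sup_n\|R_nx\|=\infty$; so no refinement of the dominated-convergence route can deliver ``$\|R_nx\|\to0$ for every $x\in\X$'' for an arbitrary compact $K$. To be fair, you have put your finger on the delicate point: the paper itself is terse here, reducing to the scalar condition ``for every $\sigma>0$'' as in Theorem~\ref{thm:6.1}, where the integrand was bounded by $1$ --- a bound unavailable for $F_n$. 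But as a proof, your proposal establishes only the pointwise decay of the symbol and the lower bound on $\|R_n\|$, not the strong convergence $R_nx\to0$, which is the heart of the claim of faster (non-uniform) convergence in \eqref{eq:6.20}.
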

\begin{proof}
Observe that the sequence $\{x_{\alpha_n}^n\}$ corresponds to a
NSIWT method $\{x_{\alpha_n, r_n}^n\}$ with $r_n =1$ for every $n$.
Moreover, both the sequences $\{x_{\alpha_n}^n\}$ and
$\{x_{\hat{\alpha}_n, \hat{r}_n}^n  \}$ converge, indeed they
satisfy conditions (1) and (2) of Corollary \ref{cor:6.1},
respectively. Assuming that $x_0 =0$ and applying the same strategy
used in Theorem \ref{thm:6.1}, without any effort it is possible to show
that
\begin{align*}
&x^\dagger - x_{\hat{\alpha}_n, \hat{r}_n}^n = \prod_{k=1}^n \hat{\alpha}_k \left( (K^* K)^{\frac{\hat{r}_k +1}{2}} + \hat{\alpha}_k I \right)^{-1} x^\dagger,\\
&x^\dagger = \prod_{k=1}^n \alpha_k^{-1} \left( K^* K + \alpha_k I \right) (x^\dagger - x_{\alpha_n}^n).
\end{align*}
Therefore we find
$$
x^\dagger - x_{\hat{\alpha}_n, \hat{r}_n}^n = \left[ \prod_{k=1}^n
\hat{\alpha}_k \alpha_k^{-1}\left( (K^* K)^{\frac{\hat{r}_k +1}{2}}
+ \hat{\alpha}_k I \right)^{-1}\left( K^* K + \alpha_k I
\right)\right] (x^\dagger - x_{\alpha_n}^n) = R_n (x^\dagger -
x_{\alpha_n}^n).
$$
Since $0 \in \sigma(K^* K)$, we infer $\| R_n \| > 1$ for every $n$, and hence $\inf_{n \in \N} \| R_n \|\ge 1$. If we prove that
$$
\lim_{n \to \infty} \| R_n x\| =0,
$$
for every $x \in \X$, then the thesis follows. Since
$$
\lim_{n \to \infty} \| R_n x\| =0 \Longleftrightarrow \lim_{n\to
\infty} \prod_{k=1}^n \frac{ \hat{\alpha}_k  (\sigma^2 +
\alpha_k)}{\alpha_k(\sigma^{\hat{r}_k +1} + \hat{\alpha}_k)} =0
\Longleftrightarrow  \sum_{k=1}^\infty \frac{\alpha_k
\sigma^{\hat{r}_k +1} - \hat{\alpha}_k \sigma^2}{\alpha_k
\sigma^{\hat{r}_k +1} + \alpha_k \hat{\alpha}_k} = \infty \; \;
\forall \sigma>0,
$$
if we substitute the values $\alpha_n = \alpha_0 q^n$, then
$\hat{\alpha}_n = 1/n!$ and $\hat{r}_n=n$, we obtain
$$
\sum_{k=1}^\infty \frac{\alpha_k \sigma^{\hat{r}_k +1} -
\hat{\alpha}_k \sigma^2}{\alpha_k \sigma^{\hat{r}_k +1} + \alpha_k
\hat{\alpha}_k} = \sum_{k=1}^\infty \frac{1 - \frac{\sigma}{\alpha_0
n! (q\sigma)^n}}{1 + \frac{1/n!}{\sigma^{n+1}}},
$$
and the right hand side of the above equality diverges: indeed
$$
\frac{1 - \frac{\sigma}{\alpha_0 n! (q\sigma)^n}}{1 + \frac{1/n!}{\sigma^{n+1}}} \longrightarrow 1 \; \; \mbox{for every fixed } q, \sigma \in (0,1) \mbox{ and } \alpha_0 \in (0, \infty).
$$
\end{proof}

\subsection{Analysis of convergence for perturbed data}

Let now consider $y^\delta = y + \delta \eta$, with $y \in R(K)$ and $\| \eta \|=1$, i.e., $\| y^\delta - y \| = \delta$. We are concerned about the convergence of the NSIWT method when the initial datum $y$ is perturbed. Hereafter we will use the notation $x_{\alpha_n, r_n}^{n, \delta}$ for the solution of NSIWT \eqref{eq:6.2} with initial datum $y^\delta$.

The following result can be proved similarly to Theorem~1.7 in \cite{brill1987iterative}. 
\begin{theorem}\label{thm:8.1.1}
Under the assumptions of Corollary \ref{cor:6.1}, if $\{\delta_n\}$ is a sequence convergent to $0$ with $\delta_n \geq 0$ and such that 
\begin{equation}\label{eq:6.9}
\lim_{n\to \infty} \delta_n \cdot \sum_{k=1}^n \alpha_k ^{-1} = 0,
\end{equation}
then, $\lim_{n \to \infty} \| x^\dagger - x_{\alpha_n, r_n} ^{n, \delta_n} \| =0$.
\end{theorem}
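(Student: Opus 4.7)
The plan is to apply the triangle inequality and treat separately the noise-free approximation error and the noise-propagation error:
$$\| x^\dagger - x_{\alpha_n, r_n}^{n,\delta_n} \| \;\leq\; \| x^\dagger - x_{\alpha_n, r_n}^{n} \| \;+\; \| x_{\alpha_n, r_n}^{n} - x_{\alpha_n, r_n}^{n,\delta_n} \|.$$
The first summand vanishes as $n \to \infty$ by Corollary~\ref{cor:6.1}, whose hypotheses are in force by assumption. The whole problem therefore reduces to bounding the second summand under the hypothesis~\eqref{eq:6.9}.

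For this, I would first derive, by induction on $n$, the closed-form spectral representation
$$x_{\alpha_n, r_n}^{n,\delta} \;=\; \sum_{m:\, \sigma_m >0} \frac{F^{(n)}(\sigma_m)}{\sigma_m}\,\langle y^\delta, u_m \rangle \, v_m, \qquad F^{(n)}(\sigma) := 1 - \prod_{k=1}^n \frac{\alpha_k}{\sigma^{r_k +1} + \alpha_k},$$
i.e.\ the nonstationary analogue of the filter computed in Proposition~\ref{it-frac} for the stationary case. The inductive step is algebraic because all the operators $(K^*K)^{(r_k+1)/2} + \alpha_k I$ and $(K^*K)^{(r_k-1)/2}K^*$ are Borel functions of $K^*K$ and hence commute; the same manipulation used in~\eqref{eq:6.3} yields the claimed product. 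By linearity, this representation immediately gives the crucial estimate
$$\| x_{\alpha_n, r_n}^{n} - x_{\alpha_n, r_n}^{n,\delta_n} \| \;\leq\; \delta_n \cdot \sup_{0 < \sigma \leq 1} \frac{F^{(n)}(\sigma)}{\sigma}.$$

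The final step is a clean estimate on this supremum. Using the elementary inequality $1 - \prod_{k=1}^n (1-t_k) \leq \sum_{k=1}^n t_k$ for $t_k \in [0,1]$, applied to $t_k := \sigma^{r_k+1}/(\sigma^{r_k+1}+\alpha_k)$, and combining with the normalization $\sigma_1 = \| K \| = 1$ (so that $\sigma^{r_k} \leq 1$ and $\sigma^{r_k+1} + \alpha_k \geq \alpha_k$), one obtains
$$\frac{F^{(n)}(\sigma)}{\sigma} \;\leq\; \sum_{k=1}^n \frac{\sigma^{r_k}}{\sigma^{r_k +1} + \alpha_k} \;\leq\; \sum_{k=1}^n \frac{1}{\alpha_k}.$$
Inserting this back gives $\|x_{\alpha_n, r_n}^{n} - x_{\alpha_n, r_n}^{n,\delta_n}\| \leq \delta_n \sum_{k=1}^n \alpha_k^{-1} \to 0$ by~\eqref{eq:6.9}, and the thesis follows.

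The main obstacle is pinning down the correct spectral form of the nonstationary iterate: since both $\alpha_k$ and $r_k$ vary with $k$, one cannot invoke the stationary iterated formula of Proposition~\ref{it-frac} directly. However, the commutativity inherited from the functional calculus of $K^*K$ makes the inductive derivation of $F^{(n)}$ essentially a routine telescoping, and all remaining estimates reduce to the single product inequality and the normalization $\|K\| = 1$.
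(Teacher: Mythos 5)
Your argument is correct and follows essentially the same route as the paper: both reduce the theorem to the bound $\| x^\dagger - x_{\alpha_n, r_n}^{n,\delta_n} \| \leq \| x^\dagger - x_{\alpha_n, r_n}^{n} \| + \delta_n \sum_{k=1}^n \alpha_k^{-1}$, and then conclude from the noise-free convergence guaranteed under the assumptions of Corollary \ref{cor:6.1} together with \eqref{eq:6.9}. The only cosmetic difference is that you extract the stability factor $\sum_{k=1}^n \alpha_k^{-1}$ from the closed-form nonstationary filter $F^{(n)}(\sigma)=1-\prod_{k=1}^n \alpha_k/(\sigma^{r_k+1}+\alpha_k)$ via the inequality $1-\prod_k(1-t_k)\le\sum_k t_k$, whereas the paper telescopes the error recursion at the operator level and bounds each summand's operator norm by $1$ through the spectral calculus; the two computations coincide term by term.
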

\begin{proof}
From the definition of the method \eqref{eq:6.1}, for every given $j, n$, we find that
\begin{eqnarray*}
x_{\alpha_j, r_j}^{j, \delta_n} &= &\left[\left(K^* K  \right)^{\frac{r_j + 1}{2}} + \alpha_j I  \right]^{-1} \left( \left(K^*K\right)^{\frac{r_j - 1}{2}} K^* y^{\delta_n} + \alpha_j x_{\alpha_{j-1}, r_{j-1}}^{j-1, \delta_n} \right)\\
&=& \left\{ I - \alpha_j\left[\left(K^* K  \right)^{\frac{r_j + 1}{2}} + \alpha_j I  \right]^{-1}   \right\} x^\dagger + 
\alpha_j  \left[\left(K^* K  \right)^{\frac{r_j + 1}{2}} + \alpha_j I  \right]^{-1} x_{\alpha_{j-1}, r_{j-1}}^{j-1, \delta_n} \\
&&+ \left[\left(K^* K  \right)^{\frac{r_j + 1}{2}} + \alpha_j I  \right]^{-1} \left(K^* K\right)^{\frac{r_j -1}{2}}K^* (y^{\delta_n} - y),
\end{eqnarray*}
namely,
\begin{eqnarray*}
x^\dagger - x_{\alpha_j, r_j}^{j, \delta_n} & = & \alpha_j\left[\left(K^* K  \right)^{\frac{r_j + 1}{2}} + \alpha_j I  \right]^{-1} (x^\dagger - x_{\alpha_{j-1}, r_{j-1}}^{j-1, \delta_n}) \\
 && -  \left[\left(K^* K  \right)^{\frac{r_j + 1}{2}} + \alpha_j I  \right]^{-1}\left(K^* K\right)^{\frac{r_j -1}{2}} K^* (y^{\delta_n} - y).
\end{eqnarray*}
Hence, by induction, for every fixed $n$ we have
\begin{eqnarray*}
x^\dagger - x_{\alpha_n, r_n}^{n, \delta_n} &=& \prod_{k=1}^n \alpha_k \left[\left(K^* K  \right)^{\frac{r_k + 1}{2}} + \alpha_k I  \right]^{-1} x^\dagger \\
&&- \sum_{k=1}^n \alpha_k ^{-1} \prod_{i=k}^n \alpha_i \left[\left(K^* K  \right)^{\frac{r_i + 1}{2}} + \alpha_i I  \right]^{-1} \left(K^* K\right)^{\frac{r_k -1}{2}} K^* (y^{\delta_n}-y).
\end{eqnarray*}
If we set $g_{k,n}(K^* K)=  \prod_{i=k}^n \alpha_i \left[\left(K^* K  \right)^{\frac{r_i + 1}{2}} + \alpha_i I  \right]^{-1} \left(K^* K\right)^{\frac{r_k -1}{2}}$, then we have 
\begin{align*}
\| g_{k,n}(K^* K) K^* y \|^2 &= \langle g_{k,n}(K^* K) K^* y, g_{k,n}(K^* K)K^* y \rangle \\
							 &=  \langle g_{k,n}(K K^* ) K K^* y, g_{k,n}(K K^* ) y \rangle \\
							 &=  \langle g_{k,n}(K K^*)(K K^*)^{1/2} y, g_{k,n}(K^* K)(KK^*)^{1/2} y \rangle \\
							 &= \|  g_{k,n}(K K^*)(K K^*)^{1/2} y \|^2,
\end{align*}
where we used the fact that $ g_{k,n}(K^* K) K^* =  K^* g_{k,n}(KK^*)$ and that for every bounded Borel function $f$ and $h$,  the product $f(A)h(B)$ commutes if the self-adjoint operators $A$ and $B$ commute \cite[see 12.24]{rudin1991functional}. Therefore,
\begin{align*}
\left\|\prod_{j=k}^n \alpha_j \left[\left(K^* K  \right)^{\frac{r_j + 1}{2}} + \alpha_j I  \right]^{-1} \left(K^* K\right)^{\frac{r_k -1}{2}} K^*  \right\| &= \left\| \prod_{j=k}^n \alpha_j \left[\left(KK^*  \right)^{\frac{r_j + 1}{2}} + \alpha_j I  \right]^{-1} \left(KK^* \right)^{\frac{r_k}{2}}  \right\|\\
&= \max_{\sigma \in [0,1]} \left| \sigma^{r_k} \prod_{j=k}^n \frac{\alpha_j}{\sigma^{r_j + 1} + \alpha_j }  \right| \leq 1.
\end{align*}

It follows that
\begin{align*}
\| x^\dagger - x_{\alpha_n, r_n}^{n, \delta_n} \| &\leq \| \prod_{k=1}^n \alpha_k \left[\left(K^* K  \right)^{\frac{r_k + 1}{2}} + \alpha_k I  \right]^{-1} x^\dagger \| + \sum_{k=1}^n \alpha_k ^{-1} \|y^{\delta_n} - y\| \\
&= \| x^\dagger -   x_{\alpha_n, r_n}^{n} \| + \delta_n \sum_{k=1}^n \alpha_k ^{-1},
\end{align*}
and by Corollary \ref{cor:6.2} and \eqref{eq:6.9}, $\|x^\dagger - x_{\alpha_n, r_n}^{n, \delta_n} \| \to 0$ for $n \to \infty$.
\end{proof}


\section{Nonstationary iterated fractional Tikhonov} \label{sec:NSIFT} 

\begin{definition}[Nonstationary iterated fractional Tikhonov]
Let $\{ \alpha_n \}_{n\in \N}$ and $\{\gamma_n \}_{n\in \N}$ be sequences of real numbers such that $\alpha_n >0$ and $\gamma_n \geq 1/2$ for every $n$. We define the \emph{nonstationary iterated fractional Tikhonov method} \emph{(NSIFT)} as
\begin{equation}\label{eq:9.1}
\begin{cases}
x_{\alpha_0, \gamma_0}^0 := 0; \\
\left( K^* K + \alpha_n I  \right)^{\gamma_n} x_{\alpha_n, \gamma_n}^n := (K^* K)^{\gamma_n -1} K^* y + \left[\left( K^*K + \alpha_n I \right)^{\gamma_n} - \left( K^*K \right)^{\gamma_n}  \right] x_{\alpha_{n-1}, \gamma_{n-1}}^{n-1}.
\end{cases}
\end{equation}
We denote by $x_{\alpha_n, \gamma_n}^{n, \delta}$ the $n$-th iteration of NSIFT if $y = y^\delta$. 
\end{definition}

\begin{theorem}
The \emph{NSIFT} method \eqref{eq:9.1} converges to $x^\dagger \in \X$ 
as $n \to \infty$ if and only if $\sum_n \left(\frac{\sigma^2}{\sigma^2 + \alpha_n} \right)^{\gamma_n}$ diverges for every $\sigma >0$.
\end{theorem}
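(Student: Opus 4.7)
The plan is to follow the template of Theorem \ref{thm:6.1} used for NSIWT, which applies almost verbatim once the correct error recursion is derived. First, I would rewrite the recurrence \eqref{eq:9.1} with $y = Kx^\dagger$ in the error form. Subtracting $\left(K^*K + \alpha_n I\right)^{\gamma_n} x^\dagger$ from both sides of \eqref{eq:9.1}, one gets
\begin{equation*}
\left(K^*K + \alpha_n I\right)^{\gamma_n}\bigl(x^\dagger - x^n_{\alpha_n,\gamma_n}\bigr) = \left[\left(K^*K + \alpha_n I\right)^{\gamma_n} - (K^*K)^{\gamma_n}\right]\bigl(x^\dagger - x^{n-1}_{\alpha_{n-1},\gamma_{n-1}}\bigr),
\end{equation*}
which, using the fact that $(K^*K+\alpha_nI)^{-\gamma_n}$ and $(K^*K)^{\gamma_n}$ commute (they are functions of the same self-adjoint operator), iterates to
\begin{equation*}
x^\dagger - x^n_{\alpha_n,\gamma_n} = \prod_{k=1}^{n}\left[I - \frac{(K^*K)^{\gamma_k}}{(K^*K+\alpha_k I)^{\gamma_k}}\right]x^\dagger,
\end{equation*}
since $x^0_{\alpha_0,\gamma_0} = 0$.

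Next, convergence of the method to $x^\dagger$ for every $x^\dagger \in \X$ is, by the spectral theorem, equivalent to
\begin{equation*}
\lim_{n\to\infty}\int_{\sigma(K^*K)} \left|\prod_{k=1}^{n}\left(1 - \frac{\sigma^{2\gamma_k}}{(\sigma^2+\alpha_k)^{\gamma_k}}\right)\right|^2 d\langle E_{\sigma^2} x^\dagger, x^\dagger\rangle = 0
\end{equation*}
for every $x^\dagger \in \X$. Since each factor lies in $[0,1)$ for $\sigma \in \sigma(K)\setminus\{0\}$ (as $\alpha_k>0$ and $\gamma_k>0$ force $\sigma^{2\gamma_k}<(\sigma^2+\alpha_k)^{\gamma_k}$), the integrand is bounded by $1$, and the total mass equals $\|x^\dagger\|^2$, so the Dominated Convergence Theorem lets me interchange the limit with the integral.

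Therefore the method converges for every $x^\dagger\in \X$ if and only if
\begin{equation*}
\prod_{k=1}^{\infty}\left(1 - \frac{\sigma^{2\gamma_k}}{(\sigma^2+\alpha_k)^{\gamma_k}}\right) = 0 \qquad \text{for every } \sigma\in\sigma(K)\setminus\{0\},
\end{equation*}
which, by Lemma \ref{lem:6.1}, is equivalent to the divergence of $\sum_k \bigl(\sigma^2/(\sigma^2+\alpha_k)\bigr)^{\gamma_k}$ for every $\sigma > 0$ (noting that for $\sigma$ outside $\sigma(K)$ one may argue by density of the spectrum near $0$ and by monotonicity in $\sigma$, since the series only needs to diverge for the spectral values that actually carry mass). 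The only step requiring care is the dominated convergence argument and the verification that $\sigma^{2\gamma_k}/(\sigma^2+\alpha_k)^{\gamma_k}<1$ strictly, so that Lemma \ref{lem:6.1} applies; everything else is a straightforward transcription from the proof of Theorem \ref{thm:6.1}. I do not expect any genuine obstacle beyond keeping the spectral calculus bookkeeping correct.
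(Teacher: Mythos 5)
Your proposal is correct and follows essentially the same route as the paper: the paper likewise derives the error recursion $x^\dagger - x^n_{\alpha_n,\gamma_n} = \prod_{k=1}^n (K^*K+\alpha_k I)^{-\gamma_k}\left[(K^*K+\alpha_k I)^{\gamma_k}-(K^*K)^{\gamma_k}\right]x^\dagger$, passes to the spectral integral, interchanges limit and integral by dominated convergence as in Theorem~\ref{thm:6.1}, and concludes with Lemma~\ref{lem:6.1}. Your parenthetical remark on extending divergence from spectral values of $\sigma$ to all $\sigma>0$ via monotonicity is a point the paper glosses over, but it is handled correctly.
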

\begin{proof}
The proof follows the same steps as  in Theorem \ref{thm:6.1}. Therefore we will omit details. What follows is that
$$
x^\dagger - x_{\alpha_n, \gamma_n}^n = \prod_{k=1}^n \left(K^*K + \alpha_k I  \right)^{-\gamma_k} \left[\left(K^*K + \alpha_k I  \right)^{\gamma_k} - \left(K^*K\right)^{\gamma_k}  \right] x^\dagger,
$$
and hence
$$
\| x^\dagger - x_{\alpha_n, \gamma_n}^n \|^2 = \int_{\sigma(K^*K)} \left| \prod_{k=1}^n \frac{(\sigma^2 + \alpha_k)^{\gamma_k} - \sigma^{2\gamma_k}}{(\sigma^2 + \alpha_k)^{\gamma_k}}  \right|^2 \, d\langle E_{\sigma^2}x^\dagger, x^\dagger \rangle.
$$
Then, the method converges if and only if 
$$
\lim_{n\to \infty} \prod_{k=1}^n \left[1 - \left(\frac{\sigma^2}{\sigma^2 + \alpha_k}\right)^{\gamma_k}  \right]=0
$$
for every $\sigma >0$.
The thesis follows by Lemma~\ref{lem:6.1}. 
\end{proof}

\begin{corollary}\label{cor:9.1}
\begin{itemize}
\item[]
\item[(1)] Let $\lim_{k \to \infty}\gamma_k = \gamma \in [1/2, \infty)$. Then the NSIFT method converges if and only if 
$$
\sum_{k=1}^n \alpha_k ^{-\gamma} = \infty.
$$
More in general, if $\sup_{k \in \N}\{\gamma_k\} =s \in [1/2, \infty)$ and $\sum_{k=1}^\infty \alpha_k ^{-s} = \infty$, then the NSIFT method converges.
\item[(2)]  Let $\lim_{k\to \infty} \gamma_k = \infty$. If $\lim_{k\to \infty}\alpha_k =0$ and $\lim_{k\to \infty}\alpha_k \gamma_k = l \in [0, \infty)$, then the NSIFT method converges.
\end{itemize}
\end{corollary}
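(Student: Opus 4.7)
The starting point is the characterization just established: the NSIFT method converges if and only if
\[
\sum_{k=1}^{\infty}\left(\frac{\sigma^{2}}{\sigma^{2}+\alpha_{k}}\right)^{\gamma_{k}}=\infty \qquad \text{for every } \sigma>0.
\]
My plan is to analyze this $\sigma$-dependent series under each hypothesis of the corollary by means of a simple pointwise bound on the summands combined, when needed, with a case split based on the size of $\alpha_{k}$ relative to $\sigma^{2}$.

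For the ``more general'' part of (1), fix $\sigma>0$. Since $\sigma^{2}/(\sigma^{2}+\alpha_{k})\in(0,1]$ and $\gamma_{k}\leq s$, I would use the monotonicity estimate
\[
\left(\frac{\sigma^{2}}{\sigma^{2}+\alpha_{k}}\right)^{\gamma_{k}} \geq \left(\frac{\sigma^{2}}{\sigma^{2}+\alpha_{k}}\right)^{s}
\]
and split the indices into $\{k:\alpha_{k}\leq\sigma^{2}\}$, where each summand is at least $2^{-s}$, and $\{k:\alpha_{k}>\sigma^{2}\}$, where $\sigma^{2}+\alpha_{k}<2\alpha_{k}$ gives each summand at least $(\sigma^{2}/2)^{s}\alpha_{k}^{-s}$. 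If the first index set is infinite the $\sigma$-dependent series trivially diverges; otherwise the tail is bounded below by a positive multiple of $\sum\alpha_{k}^{-s}$, which is infinite by hypothesis. The $(\Leftarrow)$ direction of the iff in (1) is then a special case: when $\gamma_{k}\to\gamma$, for every $\varepsilon>0$ one has $\gamma_{k}\leq\gamma+\varepsilon$ eventually, and the previous bounds together with the hypothesis $\sum\alpha_{k}^{-\gamma}=\infty$ yield divergence of the $\sigma$-dependent series. For the $(\Rightarrow)$ direction, choose $\sigma=1$ to obtain $\sum(1+\alpha_{k})^{-\gamma_{k}}=\infty$; since $\alpha_{k}$ bounded would already give $\sum\alpha_{k}^{-\gamma}=\infty$ trivially, one may assume $\alpha_{k}\to\infty$ and then conclude via $(1+\alpha_{k})^{-\gamma_{k}}\sim\alpha_{k}^{-\gamma_{k}}\sim\alpha_{k}^{-\gamma}$ using $\gamma_{k}\to\gamma$.

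For part (2) the approach is a direct termwise computation. Using $\alpha_{k}\to 0$ and the Taylor expansion of $\log(1+x)$,
\[
\gamma_{k}\log\frac{\sigma^{2}}{\sigma^{2}+\alpha_{k}} = -\gamma_{k}\log\!\left(1+\frac{\alpha_{k}}{\sigma^{2}}\right) = -\frac{\gamma_{k}\alpha_{k}}{\sigma^{2}} + O(\gamma_{k}\alpha_{k}^{2}).
\]
The hypotheses $\alpha_{k}\gamma_{k}\to l$ and $\alpha_{k}\to 0$ force the right-hand side to tend to $-l/\sigma^{2}$, so each summand tends to $e^{-l/\sigma^{2}}>0$ for every $\sigma>0$. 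The terms do not approach zero, so the series diverges and the NSIFT method converges.

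The main obstacle I foresee is the necessary direction of the iff in (1) when $\alpha_{k}\to\infty$: rigorously passing from $\sum\alpha_{k}^{-\gamma_{k}}=\infty$ to $\sum\alpha_{k}^{-\gamma}=\infty$ requires controlling the factor $\alpha_{k}^{\gamma-\gamma_{k}}$, i.e.\ the product $(\gamma-\gamma_{k})\log\alpha_{k}$. In pathological cases, where $\alpha_{k}$ tends to infinity very quickly while $\gamma_{k}$ tends to $\gamma$ very slowly, this product need not be bounded, so the clean asymptotic $\alpha_{k}^{-\gamma_{k}}\sim\alpha_{k}^{-\gamma}$ fails. I expect this step either to require a more delicate case analysis or a mild implicit regularity assumption on the rate $\gamma_{k}\to\gamma$, whereas the other two assertions follow from straightforward monotonicity and Taylor-expansion arguments.
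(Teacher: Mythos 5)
Your part (2) and the ``more general'' sufficient condition in part (1) are correct and follow essentially the paper's own route: the paper settles the latter with the single inequality $\left(\frac{\sigma^2}{\sigma^2+\alpha_k}\right)^{\gamma_k}\geq\left(\frac{\sigma^2}{\sigma^2+\alpha_k}\right)^{s}$ (your split of the indices according to whether $\alpha_k\leq\sigma^2$ merely makes explicit why divergence of $\sum_k\alpha_k^{-s}$ then forces divergence of the $\sigma$-dependent series), and for (2) it writes $\bigl(1-\frac{\alpha_k}{\sigma^2+\alpha_k}\bigr)^{\gamma_k}\sim \mathrm{e}^{-\alpha_k\gamma_k/(\sigma^2+\alpha_k)}\to \mathrm{e}^{-l/\sigma^2}\neq 0$, which is exactly your logarithmic expansion.

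The gap is in the equivalence of part (1). In your ``$\Leftarrow$'' step, bounding $\gamma_k\leq\gamma+\varepsilon$ eventually gives summands bounded below (for $\alpha_k>\sigma^2$) by $c_\sigma\,\alpha_k^{-(\gamma+\varepsilon)}$, and divergence of $\sum_k\alpha_k^{-\gamma}$ does \emph{not} imply divergence of $\sum_k\alpha_k^{-(\gamma+\varepsilon)}$ (take $\alpha_k=k^{1/\gamma}$), so ``the previous bounds together with $\sum_k\alpha_k^{-\gamma}=\infty$ yield divergence'' does not go through; the ``$\Rightarrow$'' direction you explicitly leave open. The paper dispatches both directions at once by asserting the comparability $\sum_{k=1}^n\left(\frac{\sigma^2}{\sigma^2+\alpha_k}\right)^{\gamma_k}\sim c\sum_{k=1}^n\left(\frac{\sigma^2}{\sigma^2+\alpha_k}\right)^{\gamma}$ and then arguing implicitly as in Corollary~\ref{cor:6.1}(1) and Lemma~\ref{lem:6.2} to identify divergence of the latter series for all $\sigma$ with $\sum_k\alpha_k^{-\gamma}=\infty$. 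That comparability is precisely the boundedness of the factor $\left(\frac{\sigma^2}{\sigma^2+\alpha_k}\right)^{\gamma_k-\gamma}$ that you single out as the obstacle: it holds when $(\gamma_k-\gamma)\log(\sigma^2+\alpha_k)$ stays bounded (automatic if the $\alpha_k$ are bounded, the regime relevant for regularization parameters, where in fact the summands stay bounded away from zero and both sides of the equivalence are trivially true), but it can fail for rapidly growing $\alpha_k$: with $\alpha_k=k$, $\gamma=1$, $\gamma_k=1+2\log\log k/\log k$ the $\sigma$-series converges for every $\sigma>0$ although $\sum_k\alpha_k^{-1}=\infty$. So your diagnosis of the difficulty is accurate (and applies to the paper's one-line argument as well), but your proposal neither fixes it nor observes that the bounded-$\alpha_k$ case is immediate; as written, the iff in (1) remains unproved.
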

\begin{proof}
(1) It is immediate noticing that 
\begin{align*} 
&\sum_{k=1}^n \left(\frac{\sigma^2}{\sigma^2 + \alpha_k}  \right)^{\gamma_k} \sim c \sum_{k=1}^n \left(\frac{\sigma^2}{\sigma^2 + \alpha_k}  \right)^{\gamma} \\
&\sum_{k=1}^n \left(\frac{\sigma^2}{\sigma^2 + \alpha_k}  \right)^{\gamma_k} \geq \sum_{k=1}^n \left(\frac{\sigma^2}{\sigma^2 + \alpha_k}  \right)^{s}.
\end{align*}

(2) We observe that
$$
\left(\frac{\sigma^2}{\sigma^2 + \alpha_k}  \right)^{\gamma_k} = \left(1 - \frac{\alpha_k}{\sigma^2 + \alpha_k}  \right)^{\gamma_k} \sim \textrm{e}^{-\frac{\alpha_k \gamma_k}{\sigma^2 + \alpha_k}} \to \textrm{e}^{-l/\sigma^2} \neq 0
$$
for $k \to \infty$. Then $\sum_{k=1}^n \left(\frac{\sigma^2}{\sigma^2 + \alpha_k}  \right)^{\gamma_k}$ diverges for every $\sigma>0$ and the NSIFT method converges.
\end{proof}

\begin{theorem}\label{thm:9.2}
Let $\{x_{\alpha_n, \gamma_n}^n \}_{n \in \N}$ be a convergent sequence of the NSIFT method, with $x^\dagger \in \X_{\nu}$ for some $\nu >0$, and let $\{ \vartheta_n \}_{n \in \N}$ be a divergent sequence of positive real numbers. If
\begin{subequations}
\begin{equation} 
\lim_{n\to \infty} \vartheta_n \sigma^\nu \prod_{k=1}^n \left( 1 - \frac{\sigma^{2\gamma_k}}{\left( \sigma^{2} + \alpha_k \right)^{\gamma_k}}  \right)=0 \qquad \mbox{for every } \sigma \in \sigma(K)\setminus \{0\}; \label{eq:9.4}
\end{equation}
\begin{equation}
\sup_{\sigma \in  \sigma(K)\setminus \{0\}} \vartheta_n \sigma^\nu \prod_{k=1}^n \left( 1 - \frac{\sigma^{2\gamma_k}}{\left( \sigma^{2} + \alpha_k \right)^{\gamma_k} }  \right) \leq c < \infty \qquad \mbox{ uniformly with respect to } n, \label{eq:9.5}
\end{equation}
\end{subequations}
then 
\begin{equation}
\| x^\dagger - x_{\alpha_n, \gamma_n}^n \| = o (\vartheta_n ^{-1}).
\end{equation}
\end{theorem}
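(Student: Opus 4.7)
The plan is to mirror the proof of Theorem~\ref{thm:6.2} step by step, replacing the weighted-Tikhonov filter factors by the fractional-Tikhonov ones. First I would recall the product identity for the error already obtained in the proof of the preceding convergence theorem for NSIFT, namely
\[
x^\dagger - x_{\alpha_n, \gamma_n}^n = \prod_{k=1}^n \left(K^*K + \alpha_k I\right)^{-\gamma_k}\!\left[(K^*K+\alpha_k I)^{\gamma_k} - (K^*K)^{\gamma_k}\right] x^\dagger,
\]
which is the NSIFT-analogue of \eqref{eq:6.3}. Via the functional calculus for the self-adjoint operator $K^*K$ this is simply multiplication by the scalar function $\prod_{k=1}^n\bigl(1-\sigma^{2\gamma_k}/(\sigma^2+\alpha_k)^{\gamma_k}\bigr)$ in the spectral variable $\sigma^2$.

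Next I would use the a-priori source condition $x^\dagger \in \X_\nu$: there exists $\omega \in \X$ with $x^\dagger = (K^*K)^{\nu/2}\omega$. Substituting this representation into the previous identity and pulling the factor $\vartheta_n\sigma^\nu$ inside the spectral integral yields
\[
\vartheta_n^2\,\|x^\dagger - x_{\alpha_n,\gamma_n}^n\|^2 = \int_{\sigma(K^*K)} \left|\vartheta_n\,\sigma^\nu \prod_{k=1}^n\left(1 - \frac{\sigma^{2\gamma_k}}{(\sigma^2+\alpha_k)^{\gamma_k}}\right)\right|^2 d\langle E_{\sigma^2}\omega,\omega\rangle.
\]

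At this stage I would invoke the Dominated Convergence Theorem on the finite measure $d\langle E_{\sigma^2}\omega,\omega\rangle$: hypothesis \eqref{eq:9.5} furnishes the uniform $L^\infty$ (hence integrable) majorant $c^2$ for the integrand, while hypothesis \eqref{eq:9.4} supplies pointwise convergence to $0$ for every $\sigma \in \sigma(K)\setminus\{0\}$. The isolated spectral value $\sigma=0$ causes no trouble because it is annihilated by the factor $\sigma^\nu$ with $\nu>0$. Passing the limit through the integral therefore gives $\lim_n \vartheta_n\|x^\dagger - x_{\alpha_n,\gamma_n}^n\|=0$, i.e.\ the desired $o(\vartheta_n^{-1})$ rate.

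I do not anticipate any substantive obstacle: the argument is a verbatim transcription of the proof of Theorem~\ref{thm:6.2}, the only bookkeeping being the replacement of the rational filter $\sigma^{r_k+1}/(\sigma^{r_k+1}+\alpha_k)$ by its fractional counterpart $\sigma^{2\gamma_k}/(\sigma^2+\alpha_k)^{\gamma_k}$, together with the remark that the spectral point $\sigma=0$ is suppressed by $\sigma^\nu$.
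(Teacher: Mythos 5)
Your proposal is correct and follows essentially the same route as the paper: the paper's proof of this theorem simply refers back to the argument of Theorem~\ref{thm:6.2}, i.e.\ the error product representation from the NSIFT convergence theorem, the source condition $x^\dagger=(K^*K)^{\nu/2}\omega$, and the Dominated Convergence Theorem with \eqref{eq:9.5} providing the integrable majorant and \eqref{eq:9.4} the pointwise limit. Your transcription of that argument, including the replacement of the filter factor and the remark on $\sigma=0$, matches the intended proof.
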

\begin{proof}
As seen in Theorem \ref{thm:6.2}, the thesis follows easily from the Dominated Convergence Theorem.
\end{proof}

\begin{corollary}
Let $\{ \gamma_k \}_{k \in \N}$ be a sequence of positive real numbers, $\gamma_k \geq 1/2$, and let $x^\dagger \in \X_{\nu}$ for some $\nu >0$. If
\begin{itemize}
\item[(i.1)] $\sup_{k \in \N} \{ \gamma_k \} = s \in [1/2, \infty)$,
\item[(i.2)] $\lim_{n \to \infty} \beta_n = \infty$,
\end{itemize}
then
\begin{align}
&\| x^\dagger - x_{\alpha_n, \gamma_n}^n \| = o (\beta_n ^{- \frac{\nu}{2s}}) \qquad \mbox{if } \, \exists\lim_{k\to \infty}\alpha_k = \alpha \in (0, \infty],\\
&\| x^\dagger - x_{\alpha_n, \gamma_n}^n \| = o (\tilde{\beta}_n ^{- \frac{\nu}{2s}}) \qquad \mbox{otherwise},
\end{align}
where we defined
$$
\beta_n = \sum_{k=1}^n \alpha_k ^{-s}, \qquad \tilde{\beta}_n =\sum_{k=1}^n \frac{1}{1+ \alpha_k ^s}.
$$
On the contrary, if
\begin{itemize}
\item[(ii.1)] $\lim_{k \to \infty}\gamma_k = \infty$,
\item[(ii.2)] $\lim_{k\to \infty}\alpha_k =0$ and $\lim_{k\to \infty}\alpha_k \gamma_k =0$,
\end{itemize}
then
\begin{equation}
\| x^\dagger - x_{\alpha_n, \gamma_n}^n \| = o (n^{-1}).
\end{equation}
\end{corollary}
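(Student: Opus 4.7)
The plan is to derive this corollary from Theorem~\ref{thm:9.2} exactly as Corollary~\ref{cor:6.2} was derived from Theorem~\ref{thm:6.2}, by choosing an appropriate auxiliary sequence $\vartheta_n$ and verifying \eqref{eq:9.4}--\eqref{eq:9.5}. For case (i) I would take $\vartheta_n = \beta_n^{\nu/(2s)}$ or $\vartheta_n = \tilde{\beta}_n^{\nu/(2s)}$, and for case (ii) $\vartheta_n = n$. Throughout, the workhorse estimate is
$$
\prod_{k=1}^n\bigl(1-z_k(\sigma)\bigr) \leq \exp\Bigl(-\sum_{k=1}^n z_k(\sigma)\Bigr), \qquad z_k(\sigma) := \Bigl(\tfrac{\sigma^2}{\sigma^2+\alpha_k}\Bigr)^{\gamma_k},
$$
combined with $e^{-x}\leq c_p\, x^{-p}$ for a suitable exponent $p$.

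For part (i), since $\gamma_k \leq s$ and $\sigma^2/(\sigma^2+\alpha_k)\in[0,1]$, raising to the smaller exponent gives $z_k(\sigma) \geq \bigl(\sigma^2/(\sigma^2+\alpha_k)\bigr)^{s}$. Choosing $p=\nu/(2s)$ in the exponential bound, the factor $\sigma^\nu$ cancels against $\sigma^{-\nu}$ coming from $(\sigma^{2s})^{-\nu/(2s)}$, producing
$$
\sigma^\nu \prod_{k=1}^n(1-z_k) \leq c\,\Bigl(\textstyle\sum_{k=1}^n (\sigma^2+\alpha_k)^{-s}\Bigr)^{-\nu/(2s)}.
$$
Since $\sigma\leq 1$, the elementary inequality $(\sigma^2+\alpha_k)^{s}\leq (1+\alpha_k)^{s}\leq C_s(1+\alpha_k^s)$ yields a lower bound of $C_s^{-1}\tilde{\beta}_n$, which gives \eqref{eq:9.5} with $\vartheta_n=\tilde{\beta}_n^{\nu/(2s)}$; the pointwise condition \eqref{eq:9.4} follows from the same inequality and the assumed divergence $\tilde{\beta}_n\to\infty$. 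Finally, when $\alpha_k\to \alpha\in(0,\infty]$, an application of Lemma~\ref{lem:6.2} (or of its obvious $s$-th-power analog) converts the rate in $\tilde{\beta}_n$ to the claimed rate in $\beta_n$.

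For part (ii), verification of the pointwise condition \eqref{eq:9.4} is straightforward: fix $\sigma>0$, expand
$$
z_k(\sigma) = \Bigl(1-\tfrac{\alpha_k}{\sigma^2+\alpha_k}\Bigr)^{\gamma_k} \geq \exp\!\Bigl(-\tfrac{2\gamma_k\alpha_k}{\sigma^2+\alpha_k}\Bigr),
$$
and use $\gamma_k\alpha_k \to 0$ to conclude $z_k(\sigma)\to 1$. Hence for $k\geq k_0(\sigma)$ one has $z_k(\sigma)\geq 1/2$, so $\sum_{k=1}^n z_k(\sigma)\geq (n-k_0(\sigma))/2$ and $n\sigma^\nu\prod(1-z_k)\leq n\sigma^\nu e^{-(n-k_0(\sigma))/2}\to 0$.

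The main obstacle is the uniform bound \eqref{eq:9.5} in part (ii): one must show $\sup_{\sigma\in(0,1]} n\sigma^\nu\prod_{k=1}^n(1-z_k(\sigma))\leq C$ independently of $n$, even though the threshold $k_0(\sigma)$ above blows up as $\sigma\to 0$. My plan here is a two-region splitting: for $\sigma\leq n^{-1/\nu}$ the factor $\sigma^\nu$ alone gives $n\sigma^\nu\leq 1$, while for $\sigma\geq n^{-1/\nu}$ one uses that $\gamma_k\alpha_k=o(1)$ forces $\gamma_k\alpha_k/\sigma^2\leq 1$ from some index $k_1$ onward (with $k_1$ controllable in terms of $n$, because $n^{-1/\nu}\to 0$ no faster than $\sqrt{\gamma_k\alpha_k}$), yielding $z_k(\sigma)\geq e^{-1}$ and hence an exponential decay in $n$ that dominates the polynomial growth. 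Making this balance rigorous --- choosing the split precisely so that $k_1\leq n/2$ eventually --- is the only delicate point; once done, \eqref{eq:9.5} follows and Theorem~\ref{thm:9.2} delivers the $o(n^{-1})$ rate.
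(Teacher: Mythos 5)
Your part (i) is essentially the paper's own argument: it follows the Corollary~\ref{cor:6.2} template via Theorem~\ref{thm:9.2}, using $\gamma_k\le s$ to get $\bigl(\sigma^2/(\sigma^2+\alpha_k)\bigr)^{\gamma_k}\ge\bigl(\sigma^2/(\sigma^2+\alpha_k)\bigr)^{s}$, the exponential bound, $e^{-x}\le c\,x^{-\nu/(2s)}$, the comparison of $(1+\alpha_k)^s$ with $1+\alpha_k^s$, and Lemma~\ref{lem:6.2} to pass from $\tilde\beta_n$ to $\beta_n$ when $\alpha_k\to\alpha\in(0,\infty]$. The only small remark is that the bound with exponent $\nu/(2s)$ gives boundedness, i.e.\ \eqref{eq:9.5}, but not the pointwise limit \eqref{eq:9.4}; for the latter take a slightly larger exponent $p>\nu/(2s)$ in $e^{-x}\le c_p x^{-p}$ and use $\tilde\beta_n\to\infty$. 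This is the same silent step as in the paper's Corollary~\ref{cor:6.2}, so it is a cosmetic fix.

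Part (ii) contains a genuine gap, and it sits exactly where you flag the ``only delicate point.'' Your two-region splitting needs, for every $\sigma\ge n^{-1/\nu}$, that $\gamma_k\alpha_k\le\sigma^2$ from some index $k_1\le n/2$ on; you justify this by asserting that $n^{-1/\nu}$ decays no faster than $\sqrt{\gamma_k\alpha_k}$, but hypothesis (ii.2) prescribes no rate at all for $\gamma_k\alpha_k\to0$, so no such control of $k_1$ in terms of $n$ is available. In fact the uniform bound \eqref{eq:9.5} with $\vartheta_n=n$ that you are trying to establish is false at this level of generality: take $\gamma_k=k^2$ and $\alpha_k=(k^2\log k)^{-1}$, so that $\gamma_k\alpha_k=1/\log k\to0$, and let $\sigma^2\le(\log n)^{-3}$. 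Then for $2\le k\le n$ one has $z_k(\sigma):=\bigl(\sigma^2/(\sigma^2+\alpha_k)\bigr)^{\gamma_k}\le\exp\bigl(-\tfrac{\gamma_k\alpha_k}{\sigma^2+\alpha_k}\bigr)\le\exp\bigl(-\tfrac12\min\{k^2,(\log n)^2\}\bigr)$, hence $\sum_{k\le n}z_k(\sigma)$ stays bounded and $\prod_{k\le n}\bigl(1-z_k(\sigma)\bigr)\ge c>0$ uniformly in $n$, while $n\sigma^\nu$ evaluated at $\sigma^2=(\log n)^{-3}$ equals $n(\log n)^{-3\nu/2}\to\infty$; since the singular values accumulate at $0$, the supremum in \eqref{eq:9.5} over $\sigma(K)\setminus\{0\}$ diverges for suitable $K$. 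So no refinement of the splitting can close this step: proving \eqref{eq:9.5} here requires an additional quantitative hypothesis (effectively a decay rate such as $\gamma_k\alpha_k\lesssim k^{-2/\nu}$), and indeed by placing singular values of $K$ and the coefficients of $x^\dagger\in\X_\nu$ at these bad scales one can even defeat the $o(n^{-1})$ conclusion itself. For comparison, the paper's own proof of part (ii) records only the pointwise fact that $e^{-\sum_{k\le n}z_k(\sigma)}=o(n^{-1})$ for each fixed $\sigma$ and does not verify \eqref{eq:9.5}; your write-up makes the missing requirement explicit, which is to your credit, but then fills it with an unsupported claim rather than an argument.
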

\begin{proof}
See Corollary \ref{cor:6.2}. In particular, for the second statement we use the fact that
$$
\textrm{e}^{-\sum_{k=1}^n\left( \frac{\sigma^2}{\sigma^2 + \alpha_k}\right)^{\gamma_k}} 
= o (n^{-1}).
$$
\end{proof}

\begin{theorem}
Under the assumptions of Corollary \ref{cor:9.1}, if $\{\delta_n\}$ is a sequence convergent to $0$ with $\delta_n \geq 0$ and such that 
\begin{equation}\label{eq:9.9}
\lim_{n\to \infty} \delta_n \cdot \sum_{k=1}^n \alpha_k ^{-\gamma_k} = 0,
\end{equation}
then, $\lim_{n \to \infty} \| x^\dagger - x_{\alpha_n, \gamma_n} ^{n, \delta_n} \| =0$.
\end{theorem}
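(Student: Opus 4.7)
The plan is to mirror the proof of Theorem \ref{thm:8.1.1} for NSIWT, with the modifications required by the fractional filter. I would start by unwinding the recursion \eqref{eq:9.1} with corrupted datum $y^{\delta_n}$: using $y = Kx^\dagger$ and the identity $(K^*K)^{\gamma_n-1}K^*y = (K^*K)^{\gamma_n}x^\dagger$, one separates the contributions of $x^\dagger$ and of $y^{\delta_n}-y$ to obtain the one-step error relation
\begin{equation*}
x^\dagger - x_{\alpha_n,\gamma_n}^{n,\delta_n} = M_n\bigl(x^\dagger - x_{\alpha_{n-1},\gamma_{n-1}}^{n-1,\delta_n}\bigr) - (K^*K+\alpha_n I)^{-\gamma_n}(K^*K)^{\gamma_n-1}K^*(y^{\delta_n}-y),
\end{equation*}
where $M_j := I - (K^*K+\alpha_j I)^{-\gamma_j}(K^*K)^{\gamma_j}$ is the noise-free propagator, with filter $1-\sigma^{2\gamma_j}/(\sigma^2+\alpha_j)^{\gamma_j} \in [0,1]$. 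Since all operators involved are functions of $K^*K$ (followed by $K^*$), they commute pairwise, and iterating $n$ times yields the decomposition
\begin{equation*}
x^\dagger - x_{\alpha_n,\gamma_n}^{n,\delta_n} = \bigl(x^\dagger - x_{\alpha_n,\gamma_n}^n\bigr) - \sum_{k=1}^n \Biggl(\prod_{j=k+1}^n M_j\Biggr)(K^*K+\alpha_k I)^{-\gamma_k}(K^*K)^{\gamma_k-1}K^*(y^{\delta_n}-y).
\end{equation*}

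I would then estimate the two parts separately by submultiplicativity of the operator norm. The first term coincides with the noise-free error at step $n$, which tends to $0$ as $n\to\infty$ by Corollary \ref{cor:9.1}. For each summand of the noise term I would use the three filter-level bounds $\|M_j\|\leq 1$, then $\|(K^*K+\alpha_k I)^{-\gamma_k}\| = \alpha_k^{-\gamma_k}$ (the supremum being attained at $\sigma=0$), and finally, via the adjoint-transport identity $(K^*K)^{\gamma_k-1}K^* = K^*(KK^*)^{\gamma_k-1}$ already employed in Theorem \ref{thm:8.1.1}, $\|(K^*K)^{\gamma_k-1}K^*\| = \sup_{\sigma\in[0,1]}\sigma^{2\gamma_k-1}\leq 1$ (under the normalization $\sigma_1=1$ inherited from Section \ref{sec:NSIWT} and the assumption $\gamma_k\geq 1/2$).

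Combining these with $\|y^{\delta_n}-y\|\leq\delta_n$, I would arrive at the inequality
\begin{equation*}
\|x^\dagger - x_{\alpha_n,\gamma_n}^{n,\delta_n}\| \leq \|x^\dagger - x_{\alpha_n,\gamma_n}^n\| + \delta_n\sum_{k=1}^n \alpha_k^{-\gamma_k},
\end{equation*}
from which the thesis follows on letting $n\to\infty$, thanks to Corollary \ref{cor:9.1} for the noise-free remainder and to the hypothesis \eqref{eq:9.9} for the noise accumulation. The delicate point is the bound on $(K^*K)^{\gamma_k-1}K^*$ in the range $\gamma_k\in[1/2,1)$: here $(K^*K)^{\gamma_k-1}$ is unbounded in isolation, but the adjoint-transport identity rewrites the composition as $K^*(KK^*)^{\gamma_k-1}$, whose filter $\sigma^{2\gamma_k-1}$ is bounded on $[0,1]$ precisely because $\gamma_k\geq 1/2$. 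This same algebraic step pins down the exponent $\gamma_k$ appearing in \eqref{eq:9.9}, since factoring out the resolvent produces the clean bound $\|(K^*K+\alpha_k I)^{-\gamma_k}\| = \alpha_k^{-\gamma_k}$ only when the residual factor remains uniformly bounded.
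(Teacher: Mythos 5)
Your proof is correct and follows essentially the same route as the paper's own (sketched) argument: unroll the recursion into the noise-free error plus accumulated noise terms, use the adjoint-transport identity so that $(K^*K)^{\gamma_k-1}K^*$ becomes a filter $\sigma^{2\gamma_k-1}\leq 1$ (thanks to $\gamma_k\geq 1/2$ and $\|K\|=1$), bound the remaining propagators by one and the resolvent factor by $\alpha_k^{-\gamma_k}$, and conclude via \eqref{eq:9.9} together with the noise-free convergence guaranteed by Corollary~\ref{cor:9.1}. If anything, your grouping—keeping $\left(K^*K+\alpha_k I\right)^{-\gamma_k}$ as the factor carrying the norm $\alpha_k^{-\gamma_k}$, instead of the paper's $\psi_k(K^*K)^{-1}$ whose norm equals $\alpha_k^{-\gamma_k}$ only when $\gamma_k\geq 1$—delivers the stated estimate slightly more cleanly, but it is the same decomposition and the same bounds.
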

\begin{proof}
Here is a sketch of the proof, since it follows step by step from the proof of Theorem \ref{thm:8.1.1}. If we set 
\begin{align*}
&\psi_k (K^* K) := \left[ (K^* K + \alpha_k I)^{\gamma_k} - (K^* K)^{\gamma_k} \right]\\
&\phi_k (K^* K) := \psi_k (K^* K) \left[K^* K  + \alpha_k I  \right]^{-\gamma_k},
\end{align*}
then from \eqref{eq:9.1} it is possible to show that
\begin{align*}
x^\dagger - x_{\alpha_n, \gamma_n}^{n, \delta_n} = \prod_{k=1}^n \phi_k (K^* K) x^\dagger 
- \sum_{k=1}^n \psi_k (K^* K)^{-1} \prod_{i=k}^n \phi_i (K^* K) \left(K^* K\right)^{\gamma_k -1} K^* (y^{\delta_n}-y),
\end{align*}
for every integer $n$ and for every perturbed data $y^{\delta_n} = y + \delta_n \eta$. Owing to the equality
$$
\left\| \prod_{i=k}^n \phi_i (K^* K) \left(K^* K\right)^{\gamma_k -1} K^*  \right\| = \left\|  \prod_{i=k}^n \phi_i (KK^*) \left(KK^*\right)^{\gamma_k -1} (KK^*)^{1/2}  \right\|,
$$
we deduce
\begin{align*}
\| x^\dagger - x_{\alpha_n, \gamma_n}^{n, \delta_n} \| &\leq \|  x^\dagger - x_{\alpha_n, \gamma_n}^{n}  \| + \delta_n \sum_{k=1} ^n \left\| \psi_k (K^* K)^{-1}  \right\| \\
&=  \|  x^\dagger - x_{\alpha_n, \gamma_n}^{n}  \| + \delta_n \sum_{k=1} ^n \alpha_k ^{-\gamma_k}.
\end{align*}
\end{proof}

\section{Numerical results}\label{sec:numerical}
We now give few selected examples with a special focus on the nonstationary iterations proposed in this paper.
For a larger comparison between fractional and classical Tikhonov refer to
\cite{klann2008regularization,hochstenbach2011fractional,Gert2015fractional}.
To produce our results we used Matlab 8.1.0.604 using a laptop pc with processor Intel iCore i5-3337U with 6 GB of RAM running Windows 8.1.

We add to the noise-free right-hand side vector $y$, the ``noise-vector'' $e$ that has in all
examples normally distributed pseudorandom entries with mean zero, and is
normalized to correspond to a chosen noise-level 
$$\xi=\frac{\|e\|}{\|y\|}.$$

As a stopping criterion for the methods we used the Discrepancy Principle \cite{hankehansen1993}, that terminates the 
iterative method at the iteration
$$
\hat{k} = \min_k\{k:\,\|y^\delta-Kx_k\|\leq\tau\delta\},
$$
where $\tau=1.01$. This criterion stops the iterations when the norm of the residual reaches the norm of the noise so that the latter is not reconstructed.

To compare the restorations with the different methods, we consider both the visual representation and the relative restoration error that is $\|\hat{x}-x^\dag\|/\|x^\dag\|$ for the computed approximation \nolinebreak $\hat{x}$.

\begin{figure}
\centering
\begin{tabular}{cc}
\includegraphics[width=0.4\textwidth,clip=true,trim=1.6cm 0cm 1.6cm 0cm]{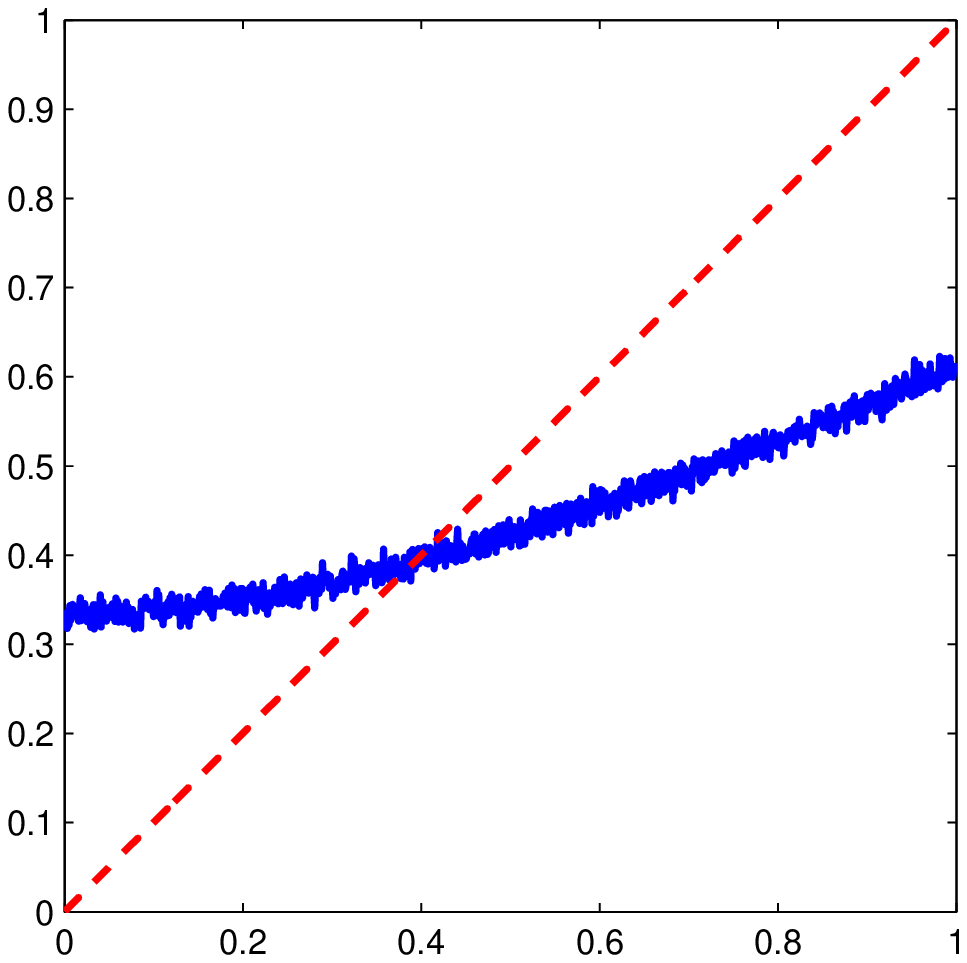} & 
\includegraphics[width=0.4\textwidth]{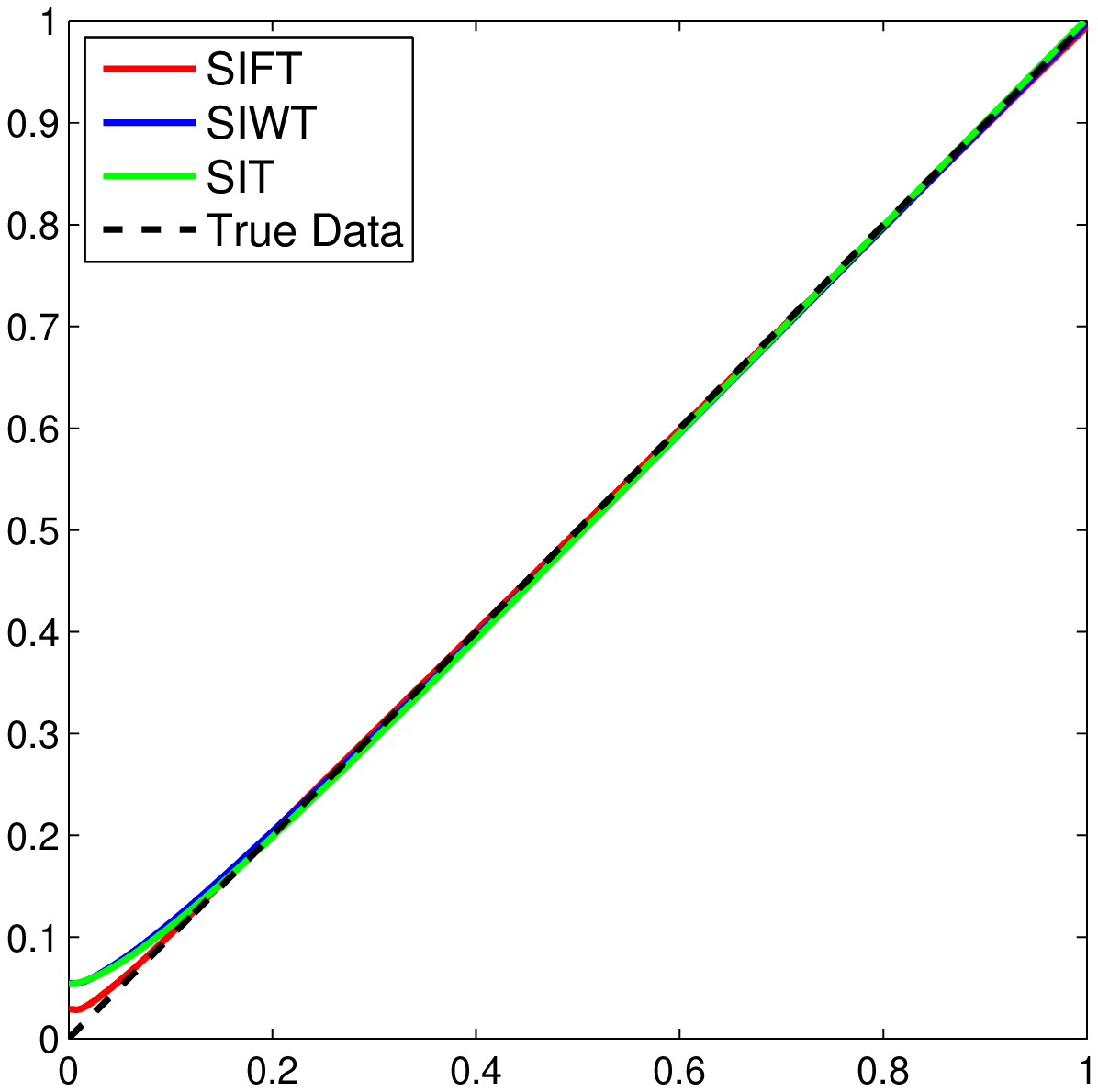}\\
(a) & (b)
\end{tabular}
\caption{Example 1 -- ``Foxgood'' test case: (a)  the true solution (dashed curve) and the observed data (solid curve), 
(b) approximated solutions by SIFT with $\gamma=0.8$ and $\alpha=10^{-3}$,
							SIWT with $r=0.6$ and $\alpha=10^{-2}$, and SIWT with $r=1$ and $\alpha=10^{-3}$. }
\label{fig:Foxgood}
\end{figure}

\subsection{Example 1}
This test case is the so-called \emph{Foxgood} in the toolbox \textsc{Regularization tool} by P. Hansen \cite{RegularizationTools} using $1024$ points. 
We have added a noise vector with $\xi=0.02$ to the observed signal. In Figure~\ref{fig:Foxgood}(a) the true signal and the measured data can be seen.

In  Table \ref{tbl:Foxgood_st} we show the relative errors with different choices of $\alpha$, $r$ and $\gamma$. In brackets we report the iteration at which the discrepancy principle stopped the method.
Note that SIFT with $\gamma=1$ and SIWT with $r=1$ are exactly the classical Tikhonov method and hence produce the same result.
Figure \ref{fig:Foxgood}(b) shows the reconstruction for SIFT with $\gamma=0.8$ and $\alpha=10^{-3}$,
SIWT with $r=0.6$ and $\alpha=10^{-2}$, and 
SIWT with $r=1$ (classical Iterated Tikhonov) with $\alpha=10^{-3}$.

\begin{table}
\begin{center}
\small
\begin{tabular}{l|l|lllll}
\multirow{2}{*}{$\alpha$}&\multirow{2}{*}{Method}&\multicolumn{5}{c}{$r/\gamma$}\\\cline{3-7}
                                     &&0.4        &0.6         &0.8         &1           &1.2\\
															   \hline								 
\multirow{2}{*}{$5\times10^{-2}$}&SIFT&337.09(7)  &0.02498(13)&0.03481(19)&0.03752(29)&0.03838(43)\\
                                 &SIWT&0.02589(9)&0.03202(13)&0.03609(19) &0.03752(29)&0.03932(43)\\\hline
\multirow{2}{*}{$10^{-2}$}       &SIFT&320.85(3)  &0.02048(5) &0.02633(7) &0.03731(7) &0.03783(9)\\
                                 &SIWT&0.01697(3)&0.01818(5) &0.03361(5) &0.03731(7) &0.03672(11)\\\hline
\multirow{2}{*}{$5\times10^{-3}$}&SIFT&423.37(3)  &0.02216(3) &0.02190(5) &0.03102(5) &0.03723(5)\\
                                 &SIWT&0.02421(3)&0.01573(3) &0.03186(3) &0.03103(5) &0.03347(7)\\\hline
\multirow{2}{*}{$10^{-3}$}       &SIFT&402.97(1)  &0.02299(1)  &\textbf{0.00698(3)}&0.01756(3)  &0.02443(3)\\
                                 &SIWT&0.06403(1)&0.02210(1) &0.02528(1) &0.01756(3)  &0.02736(3)\\\hline
\multirow{2}{*}{$5\times10^{-4}$}&SIFT&531.72(1)  &0.02119(1) &0.01729(1) &0.02507(1) &0.03119(1)\\
                                 &SIWT&0.10518(1) &0.04506(1) &0.01482(1) &0.02507(1) &0.02086(3)\\\hline
\multirow{2}{*}{$10^{-4}$}       &SIFT&1012.2(1)  &0.07246(1) &0.04229(1) &0.02704(1) &0.01675(1)\\
                                 &SIWT&0.25927(1) &0.13000(1)     &0.07213(1) &0.02704(1) &0.01154(1)
\end{tabular}
\end{center}
\caption{Example 1: relative errors for SIWT and SIFT for different choices of $\alpha$, $r$, and $\gamma$.}
\label{tbl:Foxgood_st}
\end{table}

From these results, using both fractional and weighted  iterated Tikhonov, we can see that we can obtain better restorations  than with the classical version. However, in order to obtain such results, one has to evaluate $\alpha$ very carefully. Indeed 
$\alpha$ does not only affects the convergence speed, but also the quality of the restoration: a small perturbation in $\alpha$ can lead to quite different restoration errors. 
The nonstationary version of the methods can help also to avoid such a careful and often difficult estimation. 

For the nonstationary iterations we assume the regularization parameter $\alpha_n$ at each iteration be given according to the geometric sequence
\begin{equation}
\alpha_n=\alpha_0q^n, \qquad q\in (0,1), \qquad n=1,2,\dots.
\label{eq:alpha_nonstat}
\end{equation}
Setting $r_n=0.6$ and $\gamma_n=0.8$, Table \ref{tbl:Foxgood_nst_alpha} shows that NSIFT and NSIWT provide a relative error lower than the classical nonstationary iterated Tikhonov (NSIT). 
\begin{table}
\begin{center}
\small
\begin{tabular}{l|l|lll}
\multirow{2}{*}{$\alpha_0$}&\multirow{2}{*}{Method}&\multicolumn{3}{c}{$q$}\\\cline{3-5}
                          &       &0.7           &0.8         &0.9\\\hline
\multirow{5}{*}{$10^{-1}$}&NSIFT ($\gamma_n=0.8$)  &0.024453(9)&0.030868(11)&0.028849(17)	\\
                          &NSIWT ($r_n=0.6$) &0.025223(7)&0.027628(9)&0.028534(13)	\\
                          &NSIT   &0.035162(9)&0.031627(13)&0.036472(19)	\\
&NSIFT ($\gamma_n$ in \eqref{eq:r_nonstat}) &0.032489(9)&0.027974(13)&0.037199(17)	\\
                          &NSIWT ($r_n$ in \eqref{eq:r_nonstat})&0.031493(9)&0.027436(13)&0.036059(17)	\\                          
                          \hline
\multirow{5}{*}{$10^{-2}$}&NSIFT ($\gamma_n=0.8$) &\textbf{0.014781(5)}&0.021687(5)&0.028709(5)	\\
                          &NSIWT ($r_n=0.6$)&\textbf{0.014503(3)}&0.021501(3)&0.028396(3)	\\
                          &NSIT   &0.024838(5)&0.030866(5)&0.028835(7)\\
&NSIFT ($\gamma_n$ in \eqref{eq:r_nonstat})&0.023848(5)&0.030002(5)&0.027636(7)	\\
                          &NSIWT ($r_n$ in \eqref{eq:r_nonstat})&0.023482(5)&0.029638(5)&0.027366(7)	                          
\end{tabular}
\end{center}
\caption{Example 1: relative errors for NSIWT and NSIFT with the nonstationary $\alpha_n$ in \eqref{eq:alpha_nonstat} and different choices of $r_n$ and $\gamma_n$ (NSIT is $r_n=\gamma_n=1$).}
\label{tbl:Foxgood_nst_alpha}
\end{table}
Finally, since NSIFT and NSIWT allow a nonstationary choice also for $r_n$ and $\gamma_n$, 
in Table~\ref{tbl:Foxgood_nst_alpha}  we report the results for the following nonincreasing sequences
\begin{equation}\label{eq:r_nonstat}
r_n=\gamma_n=
\left\{\begin{array}{ll}
1-\frac{n-1}{100}& n<50,\\
\frac{1}{2}&\mbox{otherwise}.
\end{array}\right.
\end{equation}
Again both NSIWT and NSIFT are able to get better results than NSIT. 
Even tough the errors are not as good as those for the best choices $r_n=0.6$ and $\gamma_n=0.8$, the choice
\eqref{eq:r_nonstat} stresses the robustness of our nonstationary iterations.

\subsection{Example 2}
We consider the test problem \emph{deriv2($\cdot$,3)} in the toolbox \textsc{Regularization tool} by P. Hansen \cite{RegularizationTools} using $1024$ points. For the noise vector it holds $\xi=0.05$. In Figure~\ref{fig:deriv}(a) we can see the measured data and the true signal. We compare NSIWT and NSIFT with the NSIT.

\begin{figure}
\centering
\begin{tabular}{cc}
\includegraphics[width=0.45\textwidth]{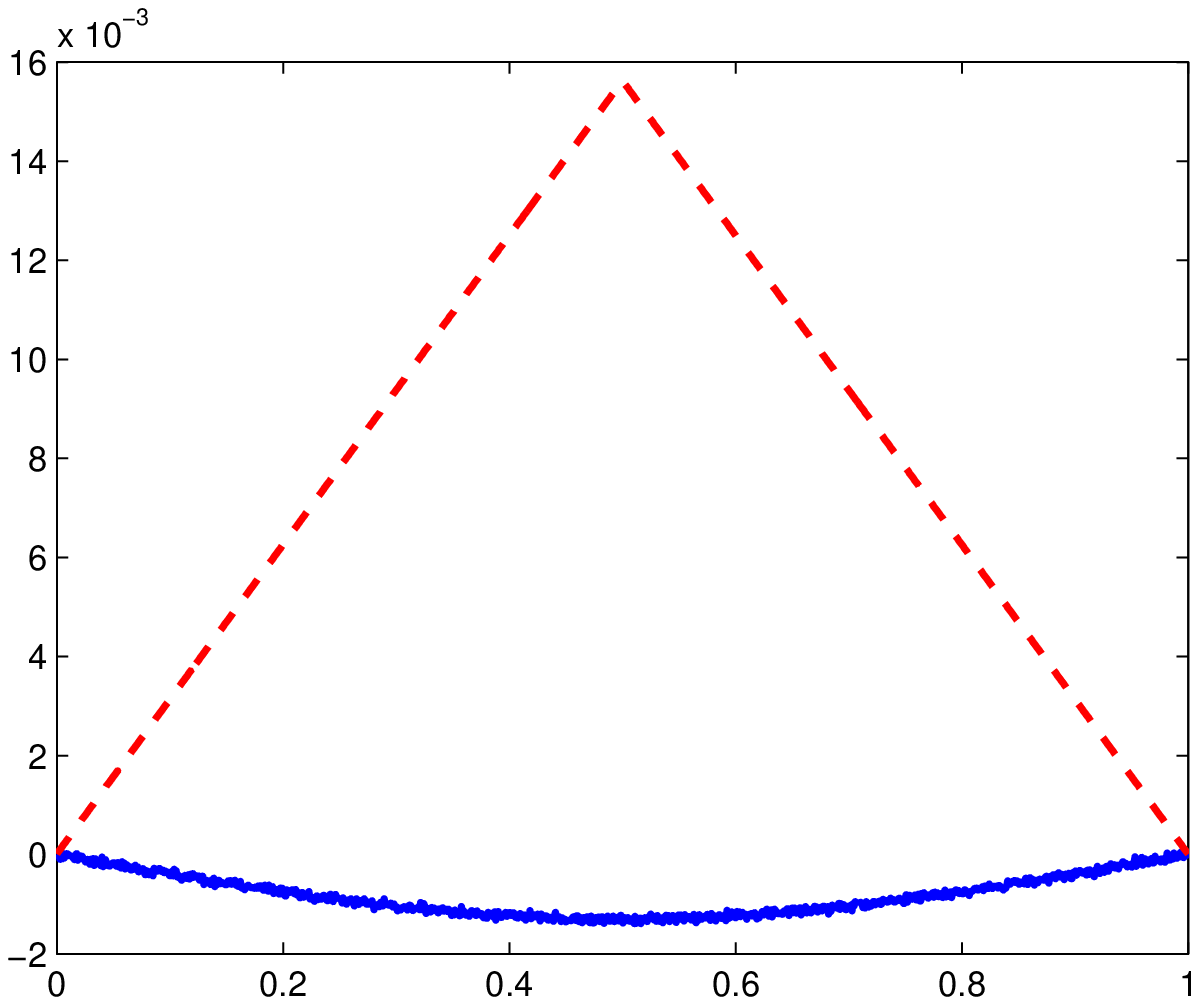} & 
\includegraphics[width=0.45\textwidth]{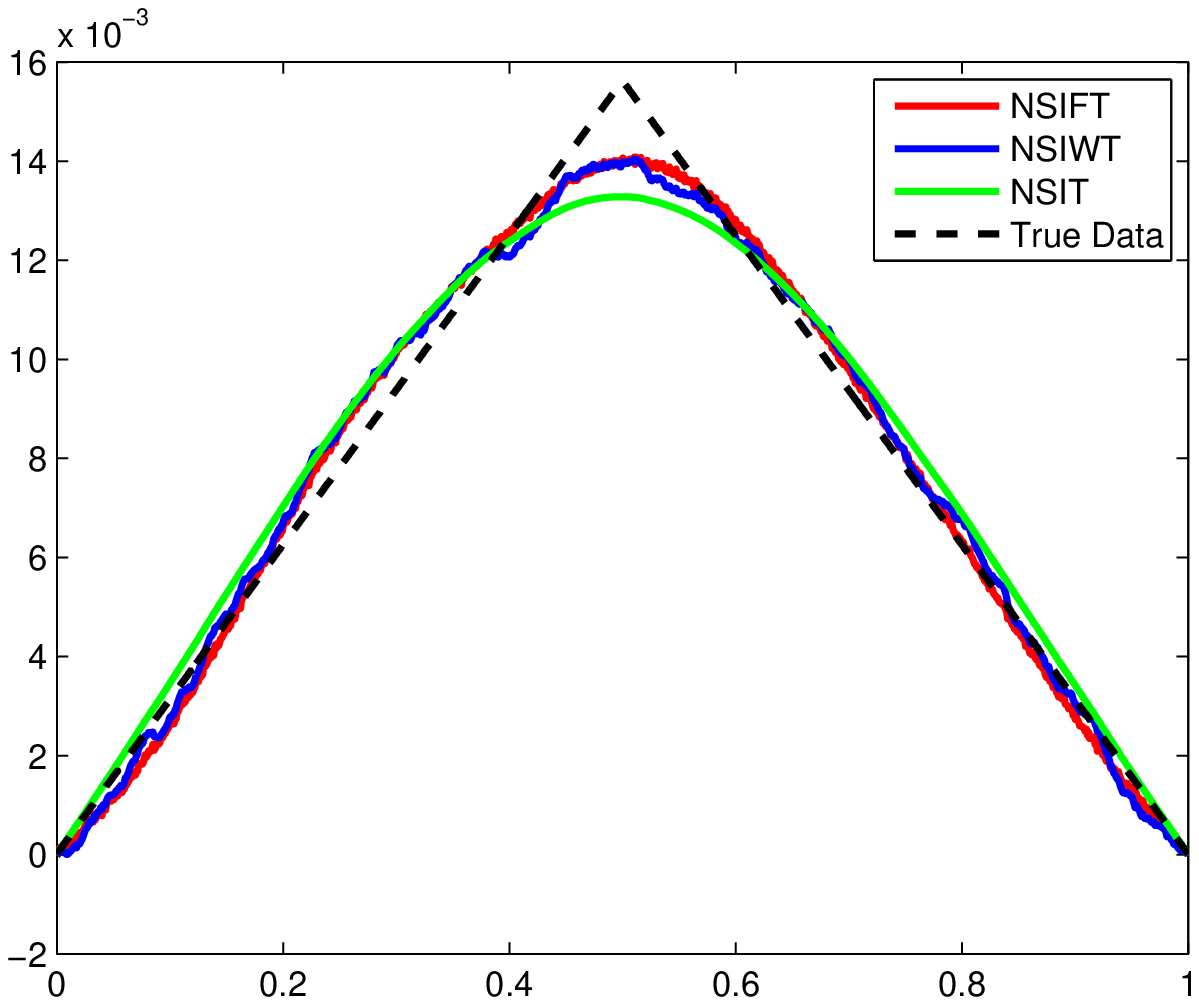}\\
(a) & (b)
\end{tabular}
\caption{Example 2 -- ``deriv2'' test case: (a)  the true solution (dashed curve) and the observed data (solid curve), 
(b) approximated solutions.}
\label{fig:deriv}
\end{figure}

Firstly, $\alpha_n$ is defined by the classical choice in \eqref{eq:alpha_nonstat}. 
Table \ref{tbl:deriv_bounded} shows the results for different choices of $r_n$ and $\gamma_n$. Note that NSIWT and NSIFT usually outperform NSIT. Nevertheless, our nonstationary iterations allow also unbounded sequences of  $r_n$ and $\gamma_n$. Therefore, according to Proposition~\ref{prop:6.6}, we set
\begin{equation}\label{eq:Ns_succession}
\alpha_n=\frac{1}{n!}, \qquad
r_n=\frac{n}{10}, \qquad
\gamma_n=\frac{n}{2}.
\end{equation}
Table \ref{tbl:deriv_unbounded}  shows that the relative restoration error obtained with the unbounded sequences $r_n$ and $\gamma_n$ in \eqref{eq:Ns_succession} is lower than the best one (according to Table~\ref{tbl:deriv_bounded}), obtained by NSIT by employing  the geometric sequence \eqref{eq:alpha_nonstat} for $\alpha_n$. The computed approximations are also compared in Figure~\ref{fig:deriv}(b), where we note a better restoration of the corner for NSIWT and NSIFT.

\begin{table}
\begin{center}
\small
\begin{tabular}{l|l|lll}
\multirow{2}{*}{$\alpha_0$}&\multirow{2}{*}{Method}&\multicolumn{3}{c}{$q$}\\\cline{3-5}
                          &                      &0.7         &0.8         &0.9\\\hline
\multirow{5}{*}{$10^{-1}$}&NSIFT ($\gamma_n=0.8$)&0.08981(11)&0.09394(13)&0.09445(19)\\
                          &NSIWT ($r_n=0.6$)     &0.08051(13) &0.09181(17)&0.09401(29)\\
                          &NSIT                  &0.08502(15) &0.09175(21) &0.09466(37)\\
						  &NSIFT ($\gamma_n$ in \eqref{eq:r_nonstat})&0.09428(13)&0.09089(19)&0.09327(29)\\
						  &NSIWT ($r_n$ in \eqref{eq:r_nonstat})&0.09073(13) &0.08648(19)&0.09199(29)\\\hline
\multirow{5}{*}{$10^{-2}$}&NSIFT ($\gamma_n=0.8$)&0.09114(5)&0.08953(7)&0.08998(9)\\
                          &NSIWT ($r_n=0.6$)     &\textbf{0.07807(7)} &0.09411(7)&0.09183(11)\\
                          &NSIT  &0.08183(9) &0.09174(11)&0.09379(17)\\
						  &NSIFT  ($\gamma_n$ in \eqref{eq:r_nonstat})&\textbf{0.07839(9)}  &0.08721(11)&0.09246(15)\\
						  &NSIWT ($r_n$ in \eqref{eq:r_nonstat}) &0.09399(7) &0.08389(11)&0.08990(15)
\end{tabular}
\end{center}
\caption{Example 2: relative errors for NSIWT and NSIFT with the nonstationary $\alpha_n$ in \eqref{eq:alpha_nonstat}  and different choices of $r_n$ and $\gamma_n$ (NSIT is $r_n=\gamma_n=1$).}
\label{tbl:deriv_bounded}
\end{table}
\begin{table}
\centering
\begin{tabular}{l|ccc}
&NSIFT&NSIWT &NSIT \\ \hline
Error &0.054831(9)&0.059211(7)&0.081835(9)
\end{tabular}
\caption{Example 2: relative restoration errors for NSIFT and NSIWT with 
parameters in \eqref{eq:Ns_succession} and NSIT with $\alpha_n=0.01 \cdot 0.7^n$.}
\label{tbl:deriv_unbounded}
\end{table}

\subsection{Example 3}
We consider the test problem \emph{blur($\cdot$,$\cdot$,$\cdot$)} in the toolbox \textsc{Regularization tool} by P. Hansen \cite{RegularizationTools}. This is a two dimensional deblurring problem, the true solution is a $40\times 40$ image, the blurring operator is a symmetric BTTB (block Toeplitz with Toeplitz block) with bandwidth $6$. This blur is created by a truncated Gaussian point spread function with variance $2$. For the noise vector it holds $\nu=0.005$. Figure~\ref{fig:blur}(a) shows the true image while the observed image is in Figure~\ref{fig:blur}(b). 
\begin{figure}
\centering
\begin{tabular}{cc}
\includegraphics[width=0.30\textwidth]{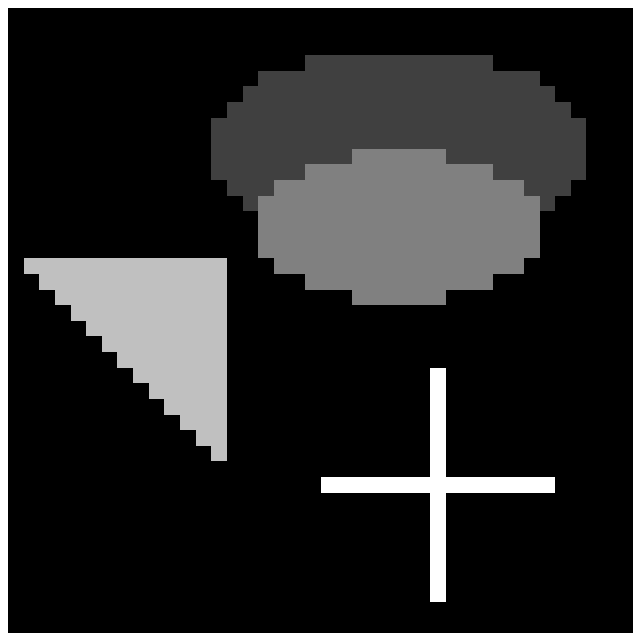} & 
\includegraphics[width=0.30\textwidth]{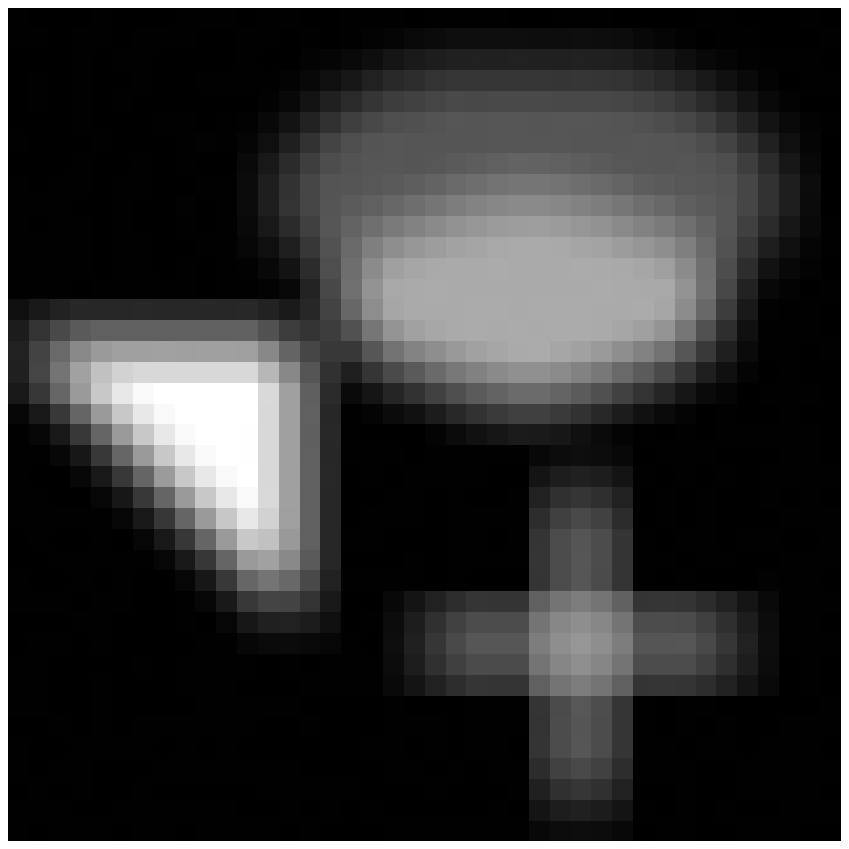}\\
(a) & (b)
\end{tabular}
\caption{Example 3 -- ``blur'' test case: (a)  the true image, 
(b) the measured data.}
\label{fig:blur}
\end{figure}

Firstly, $\alpha_n$ is defined by the classical choice in \eqref{eq:alpha_nonstat}. 
Table \ref{tbl:blur_stat} provides the results for a good stationary choice of $r_n$ and $\gamma_n$. Note that NSIWT and NSIFT usually outperform NSIT. 
Table~\ref{tbl:blur_nstat} shows that the relative restoration error obtained with the unbounded sequences $r_n$ and $\gamma_n$ in \eqref{eq:Ns_succession} is lower than the best one (according to Table~\ref{tbl:blur_stat}), obtained by the stationary choice of $r_n$ and $\gamma_n$.
We note that NSIWT and NSIFT are less sensitive than NSIT to an appropriate choice of $\alpha_0$ and $q$. In particular using  $r_n$ and $\gamma_n$ in \eqref{eq:Ns_succession}, NSIWT and NSIFT do not need any parameter estimation and the computed solutions have a relative restoration error lower than NSIT with the best parameter setting (see Table~\ref{tbl:blur_stat}) and they provide also a better reconstruction, in particular of the edges, see Figure~\ref{fig:blur_rec}.

Finally, note that for the NSIT a nondecreasing sequence of $\alpha_n$ could be considered instead of the geometric sequence \eqref{eq:alpha_nonstat}, see \cite{dona2012}. Nevertheless, this strategy requires a proper choice of $\alpha_0$ and this is out of the scope of this paper, but it could be investigated in the future in connection with our fractional and weighted variants.
A further development of our iterative schemes is in the direction of the nonstationary preconditioning strategy in \cite{DH13}, which is inspired by an approximated solution of the NSIT and hence could be investigated also in a fractional framework.

\begin{table}
\begin{center}
\small
\begin{tabular}{l|l|lll}
\multirow{2}{*}{$\alpha_0$}&\multirow{2}{*}{Method}&\multicolumn{3}{c}{$q$}\\\cline{3-5}
                          &                                           &0.7         &0.8         &0.9\\\hline
\multirow{3}{*}{$10^{-1}$}&NSIFT ($\gamma_n=0.5$) &0.19970(9)   &0.19526(13)  &0.19847(17)\\
                          &NSIWT ($r_n=0.2$)      &0.18936(7)   &\textbf{0.18920(9)}   &0.19732(11)\\
                          &NSIT                   &0.19816(15)  &0.21786(20)  &0.28703(20)\\
\hline                                                                                   
\multirow{3}{*}{$10^{-2}$}&NSIFT ($\gamma_n=0.5$) &0.19398(5)   &0.19962(5)   &0.19595(7)\\
                          &NSIWT ($r_n=0.2$)      &0.20822(3)   &0.19547(3)   &0.19109(3)\\
                          &NSIT                   &0.19518(9)   &0.20531(11)  &0.20747(17)\\
\end{tabular}
\end{center}
\caption{Example 3: relative errors for NSIWT and NSIFT with the nonstationary $\alpha_n$ in \eqref{eq:alpha_nonstat}.}
\label{tbl:blur_stat}
\end{table}
\begin{table}
\centering
\begin{tabular}{l|ccc}
&NSIFT&NSIWT &NSIT \\ \hline
Error &0.19335(10)&0.18765(8)&0.19518(9)
\end{tabular}
\caption{Example 3: relative restorations errors for NSIFT and NSIWT with 
parameters in \eqref{eq:Ns_succession} and NSIT with $\alpha_n=0.01\cdot 0.7^n$.}
\label{tbl:blur_nstat}
\end{table}

\begin{figure}
\centering
\begin{tabular}{ccc}
\includegraphics[width=0.30\textwidth]{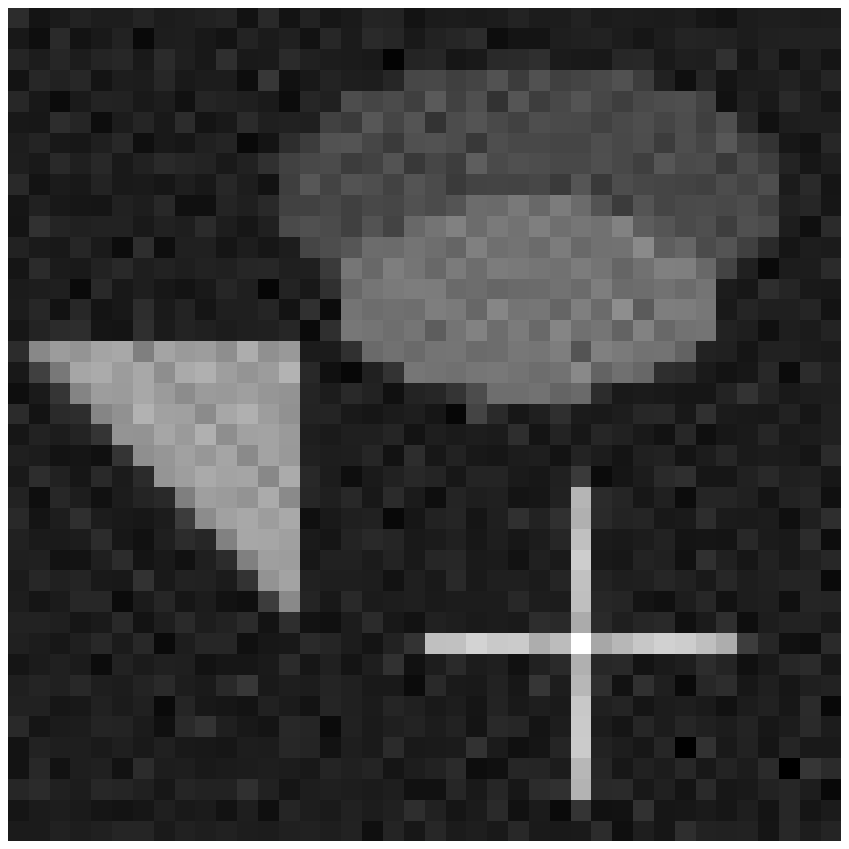} & 
\includegraphics[width=0.30\textwidth]{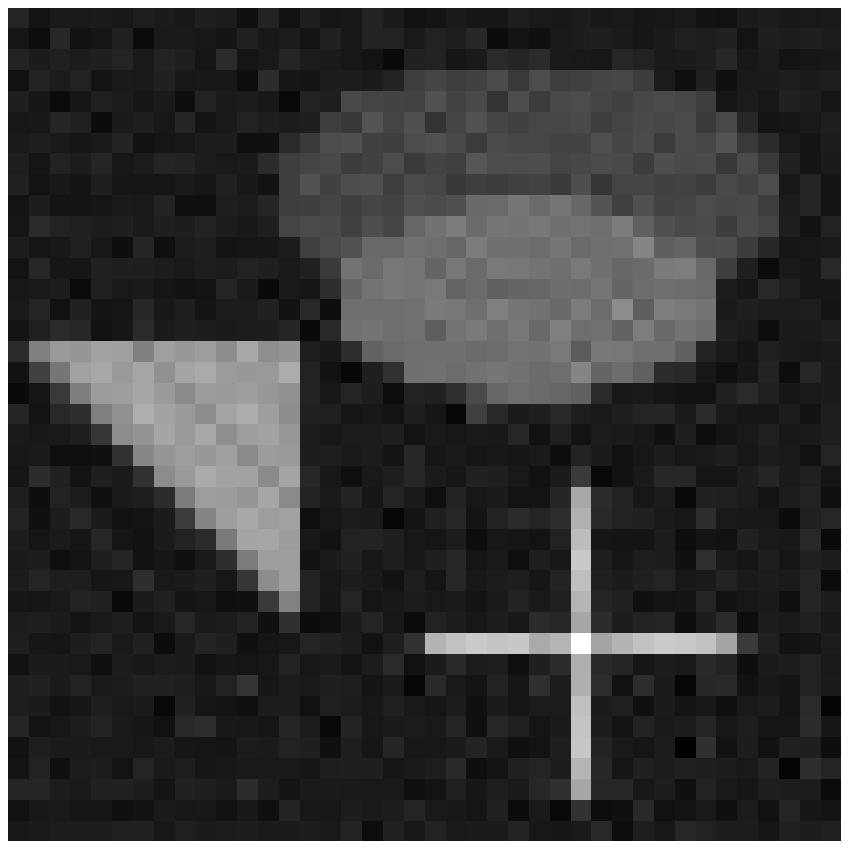} & 
\includegraphics[width=0.30\textwidth]{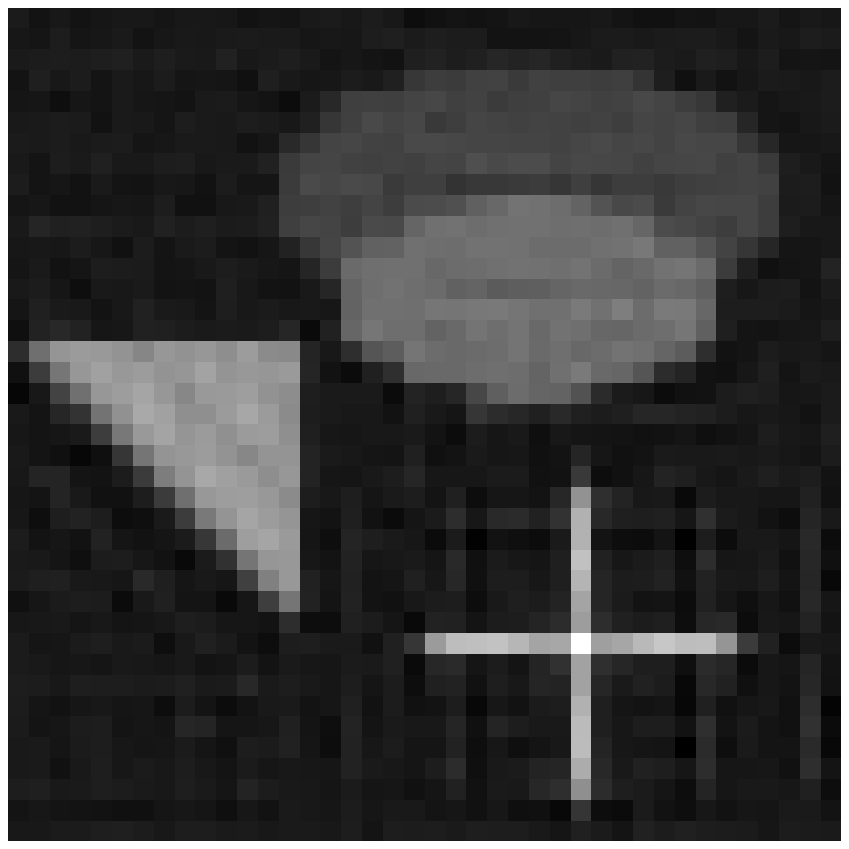}\\
(a) & (b) & (c)
\end{tabular}
\caption{Example 3 -- ``blur'' reconstructions: (a)  NSIFT and 
(b) NSIWT with parameters in \eqref{eq:Ns_succession}, (c) NSIT with $\alpha_n=0.01\cdot 0.7^n$.}
\label{fig:blur_rec}
\end{figure}
\clearpage
\section*{Acknowledgement}
The authors  warmly thank L. Reichel for illuminating discussions. 
This work is partly supported by PRIN 2012 N. 2012MTE38N for the first three authors, while the work
of the fourth author is partly supported by the Program `Becoming the Number One -- Sweden (2014)' of the Knut and Alice Wallenberg Foundation.

\bibliography{bibliografia_numerica}
\bibliographystyle{plain}

\section*{Appendix A}

\textbf{Lemma \ref{lem:6.3}}

\begin{proof}
Obviously, both the series converge or diverge simultaneously due to the Asymptotic Comparison test. If they converge, the thesis follows trivially. On the contrary, if they both diverge then we conclude by observing that $\sum_{k=N}^n t_k / \sum_{k=1}^n t_k$ is a monotonic increasing sequence bounded from above by $1$. Indeed, if we set
$$
A_n := \sum_{k=N}^n t_k, \qquad   B_n := \sum_{k=1}^n t_k,
$$
for every $n \geq N$ and for every $x \geq 0$ the function
$$
h_n (x) = \frac{A_n + x}{B_n + x}
$$
is monotone increasing with $h_n(x) \leq 1$. Then $A_{n+1}/B_{n+1} \geq A_n/ B_n$ for every $n$ and it is easy to see that $\sup_n \{ A_n / B_n\} =1$.
\end{proof}

\noindent
\textbf{Lemma \ref{lem:6.2}}
\begin{proof}
If $\lim_{k\to \infty} t_k = t \in (0, \infty]$, then
\begin{equation}\label{eq:eqtk}
\frac{1}{t_k} \sim \left(\frac{1}{\lambda} + \frac{1}{t} \right) \frac{1}{1 + t_k},
\end{equation}
where $1/t=0$ if $t=\infty$. Therefore, from the Asymptotic Comparison test for series, both series converge or diverge simultaneously. When they converge the thesis follows trivially. Assume then that the series diverge. 
Without loss of generality and for the sake of clarity we will prove the statement for $\lambda=1$. 
If we set 
$$
X_n := \frac{ \sum_{k=1}^n \frac{1}{t_k}}{\sum_{k=1}^n \frac{1}{1+ t_k}},
$$
we want to show that the limit of $X_n$ exists finite and, moreover, that is $\lim_{n\to \infty} X_n = 1 + 1/t$. Indeed, for any fixed $\epsilon >0$ there exists $N^1 _\epsilon$ such that for any $k \geq N^1 _\epsilon$ it holds that
\begin{equation}\label{eq:app1}
\frac{1}{t_k} < \left( 1 + \frac{1}{t} + \frac{\epsilon}{2} \right) \frac{1}{1 + t_k},
\end{equation}
and for any fixed $\epsilon$ and $N^1 _\epsilon$, there exists $N^2 _\epsilon$ such that for every $n \geq N^2 _\epsilon$ it holds that
\begin{equation}\label{eq:app2}
\frac{ \sum_{k=1}^{N^1 _\epsilon} \frac{1}{t_k}}{\sum_{k=1}^{n} \frac{1}{1+ t_k}} < \frac{\epsilon}{2}.
\end{equation}
Hence, for any $n \geq \max \{N^1 _\epsilon, N^2 _\epsilon  \}$, thanks to \eqref{eq:app1} and \eqref{eq:app2}, we have that
$$
X_{n} = \frac{\sum_{k=1}^{n} \frac{1}{t_k}}{\sum_{k=1}^{n} \frac{1}{1+ t_k}} < \frac{ \sum_{k=1}^{N^1 _\epsilon} \frac{1}{t_k}}{\sum_{k=1}^{n} \frac{1}{1+ t_k}} + \left( 1 + \frac{1}{t} + \frac{\epsilon}{2}  \right) \frac{ \sum_{k=N^1 _\epsilon +1}^{n} \frac{1}{1 + t_k} }{\sum_{k=1}^{n} \frac{1}{1+ t_k}}  < \frac{\epsilon}{2} + 1 + \frac{1}{t} + \frac{\epsilon}{2} = 1 + \frac{1}{t} + \epsilon.
$$
On the other hand, there exists $N^3_\epsilon$ such that for every $k \geq N^3 _\epsilon$ it holds
\begin{equation}\label{eq:app3}
\frac{1}{t_k} > \left( 1 + \frac{1}{t} - \frac{\epsilon}{2} \right) \frac{1}{1 + t_k},
\end{equation}
and, by Lemma \ref{lem:6.3}, for any fixed $N^3_\epsilon$ and for any fixed $\delta < \frac{\epsilon}{2}(1+ \frac{1}{t} - \frac{\epsilon}{2})^{-1}$, there exists $N^4_\epsilon$ such that for every $n \geq N^4_\epsilon$ it holds
\begin{equation}\label{eq:app4}
\frac{ \sum_{k=N^3 _\epsilon +1}^n \frac{1}{1 + t_k}}{\sum_{k=1}^{n} \frac{1}{1+ t_k}} > \left(1 -\delta \right).
\end{equation}
Hence, fo any $n \geq \max \{N^3_\epsilon, N^4_\epsilon  \}$, thanks to \eqref{eq:app3} and \eqref{eq:app4}, we have that
$$
X_{n} = \frac{\sum_{k=1}^{n} \frac{1}{t_k}}{\sum_{k=1}^{n} \frac{1}{1+ t_k}} > \frac{ \sum_{k=1}^{N^1 _\epsilon} \frac{1}{t_k}}{\sum_{k=1}^{n} \frac{1}{1+ t_k}} + \left( 1 + \frac{1}{t} - \frac{\epsilon}{2}  \right) \frac{ \sum_{k=N^1 _\epsilon +1}^{n} \frac{1}{1 + t_k} }{\sum_{k=1}^{n} \frac{1}{1+ t_k}}  >  \left(1 + \frac{1}{t} - \frac{\epsilon}{2}\right) (1-\delta) >  1 + \frac{1}{t} - \epsilon.
$$
Then, choosing $n \geq \max \{N^i _\epsilon : i=1, 2, 3,4 \}$,  the proof is concluded.
\end{proof}

\end{document}